\newcommand{\calA}{\mathcal A}
\newcommand{\calF}{\mathcal F}
\newcommand{\PSL}{\mathsf{PSL}}
\renewcommand{\r}{\rho}
\renewcommand{\P}{{\rm P}}
\newcommand{\ov}{\overline}
\newcommand{\Stab}{\rm Stab}
\newcommand{\OO}{\mathsf O}
\newcommand{\di}{\partial_{\infty}}
\newcommand{\Gr}{{\rm Gr}}
\newcommand{\SO}{{\rm SO}}
\newcommand{\g}{\gamma}
\newcommand{\G}{\Gamma}
\newcommand{\K}{\mathbb K}
\newcommand{\T}{\Theta}
\newcommand{\Id}{{\rm Id}}
\newcommand{\R}{\mathbb R}
\newcommand{\Z}{\mathbb Z}
\renewcommand{\P}{\mathbb P}
\renewcommand{\H}{\mathbb H}
\newcommand{\N}{\mathbb N}
\newcommand{\SL}{\mathsf{SL}}
\newcommand{\PGL}{\mathsf{PGL}}
\newcommand{\PO}{\mathsf{PO}}
\newcommand{\Sp}{{\rm Sp}}
\newcommand{\<}{\langle}
\renewcommand{\>}{\rangle}
\newcommand{\Hom}{{\rm Hom}}
\newcommand{\Homr}{{\rm Hom^{red}}}
\newcommand{\im}{{\rm Im}}
\newcommand{\sfw}{\mathsf{w}}
\newcommand{\sfz}{\mathsf{z}}
\newcommand{\sfv}{\mathsf{v}}
\newcommand{\sZ}{\mathsf{Z}}
\newcommand{\sX}{\mathsf{X}}
\newcommand{\sfs}{\mathsf{s}}
\newcommand{\sfy}{\mathsf{y}}
\newcommand{\dg}{\partial_{\infty}\Gamma}
\newcommand{\cro}{{\rm cr}}
\newcommand{\gcr}{{\rm cr}}
\newcommand{\tv}{\hspace{1mm}\pitchfork\hspace{1mm}}
\newcommand{\ntv}{\hspace{1mm}\cancel{\pitchfork}\hspace{1mm}}
\newcommand{\Is}{{\rm Iso}}
\newcommand{\spa}{{\rm span}}
   \newcommand\quotient[2]{
	\mathchoice
	{% \displaystyle
		\text{\raise.5ex\hbox{$#1$}\big/\lower.5ex\hbox{$#2$}}%
	}
	{% \textstyle
		\text{\raise.25ex\hbox{$#1$}\big/\lower.25ex\hbox{$#2$}}%
	}
	{% \scriptstyle
		#1\,/\,#2
	}
	{% \scriptscriptstyle  
		#1\,/\,#2
	}
}
\newcommand{\Isom}{\text{Isom}}
\newcommand{\bpm}{\begin{pmatrix}}
\newcommand{\epm}{\end{pmatrix}}
\theoremstyle{plain}
\newtheorem{thm}{Theorem}[section]
\newtheorem{lem}[thm]{Lemma}
\newtheorem{prop}[thm]{Proposition}
\newtheorem{cor}[thm]{Corollary}
\newtheorem*{thm*}{Theorem}
\newtheorem*{notation}{Notation}
\newtheorem{thmA}{Theorem}
\theoremstyle{definition}
\newtheorem{example}[thm]{Example}
\newtheorem{defn}[thm]{Definition}
\newtheorem{remark}[thm]{Remark}
\newcommand{\thismonth}{\ifcase\month % case 0 --- impossible!
  \or January\or February\or March\or April\or May\or June%
  \or July\or August\or September\or October\or November%
  \or December\fi}
\title{Positive surface group representations in $\PO(p,q)$}
\author{Jonas Beyrer and Beatrice Pozzetti}
\subjclass[2010]{22E40, 32G15}
\date{\today}
\begin{document}
\thanks{J.B. acknowledges funding by the Deutsche Forschungsgemeinschaft (DFG, German Research Foundation), 338644254 (SPP2026), and the Schweizerischer Nationalfonds (SNF, Swiss Research Foundation), P2ZHP2 184022 (Early Postdoc.Mobility). B.P. acknowledges funding by the DFG, 427903332 (Emmy Noether), and is partially supported by the DFG under Germany's Excellence Strategy EXC-2181/1-390900948. Both authors acknowledge funding by the DFG, 281869850 (RTG 2229). J.B. thanks I.H.E.S. for their hospitality.}
\begin{abstract}
 We show that $\Theta$-positive Anosov representations $\rho:\G\to\PO(p,q)$ of a surface group $\G$ satisfy root versus weight collar lemmas for all the Anosov roots, and are positively ratioed with respect to all such roots. We deduce from this, using a result of  \cite{Beyrer-Pozzetti2}, that $\Theta$-positive Anosov representations $\rho:\G\to\PO(p,q)$ form connected components of character varieties.
\end{abstract}

\maketitle
\tableofcontents
%%%%%%%%%%%%%%%%
\section{Introduction}

\addtocontents{toc}{\protect\setcounter{tocdepth}{1}}
Higher rank Teichm\"uller theory stems from the seminal work of Labourie \cite{Labourie-IM},  Fock--Goncharov \cite{FG}, and Burger--Iozzi--Wienhard \cite{BIW}; they discovered that, for some classes of Lie groups, there exist \emph{higher rank Teichm\"uller spaces:} connected components of character varieties of fundamental groups $\G$ of closed surfaces $S$ of genus at least 2 in higher rank semisimple Lie groups that only consist of injective representations with discrete image. More specifically Labourie and Fock--Goncharov showed that for split real Lie groups the \emph{Hitchin components}, discovered by Hitchin \cite{Hitchin}, form higher rank Teichmüller spaces, while Burger--Iozzi--Wienhard discovered the \emph{maximal components} for Hermitian Lie groups and proved that they also form higher rank Teichmüller spaces. These components consist only of representations with further remarkable geometric properties bearing strong similarities with holonomies of hyperbolizations \cite{Lee-Zhang, BP, Lab-McShane, Vlamis-Yarmola, HuangSun, FP}; furthermore, since they form connected components of character varieties, they can be studied with an array of different tools including Higgs bundles \cite{Hitchin, HiggsBundles} and real algebraic geometry \cite{BIPP}. 

A recent breakthrough in the field was given by the insight of Guichard--Wienhard \cite{GWpositivity}. Partially in collaboration with  Labourie, they developed a beautiful and clear conjectural picture of all higher rank Teichm\"uller spaces \cite{GLW, GWPaper}. They give a complete list of pairs of Lie groups and parabolic subgroups that admit a \emph{$\T$-positive structure}\footnote{In Guichard--Wienhard's theory the letter $\Theta$ refers to the subset of simple roots associated to the parabolic subgroup playing a role for the notion of positivity, %stabilizer of a point in $\calF_{p-1}(\R^{p,q})$, 
	but for the purpose of the paper it is harmless to understand "$\Theta$-positive" as "positive in the sense of Guichard--Wienhard".}; when this is the case they define  \emph{$\T$-positive representations}. They conjecture that higher rank Teichm\"uller spaces are precisely the connected components of character varieties that contain a $\Theta$-positive representation \cite[Conjecture 5.4]{GWpositivity}, and understand important positivity features of the limit set of $\Theta$-positive representations that, generalizing \cite{FG}, are behind the good geometric properties of these representations.
{ Simultaneously Bradlow--Collier--Garcia-Prada--Gothen--Oliveira, partially in collaboration with  Aparicio-Arroyo, discovered and parametrized special connected components of the moduli space of Higgs bundles on a compact Riemann surface $X$ that conjecturally detected all examples of higher rank Teichm\"uller spaces, up to some exceptional Lie groups \cite{ABC-IM,BCGPGO}. 
}

In this work we study aspects of the higher rank Teichm\"uller theory of $\PO(p,q)$: the classification of \cite{GWPaper} gives that $\PO(p,q)$  is the only family of  classical groups that carries a positive structure besides split real Lie groups and  Hermitian Lie groups of tube type; in the last two cases $\Theta$-positive representations are, respectively, Hitchin and maximal representations. 
We make three major contributions: 
we show that 
every $\T$-positive  Anosov representation satisfies collar lemmas comparing roots and weights,  we show that every $\T$-positive Anosov representation is positively ratioed, and, also relying on results of the companion paper \cite{Beyrer-Pozzetti2}, we confirm Guichard--Wienhard's conjecture  that $\T$-positive Anosov representations form connected components of character varieties.\footnote{In an independent paper  Guichard--Labourie--Wienhard proved that $\T$-positive representations for any admissible target $\sf G$ form connected components of the set of non-parabolic representations \cite{GLW}. Relying on the aforementioned results by Aparicio-Arroyo--Bradlow--Collier--Garcia-Prada--Gothen--Oliveira \cite{ABC-IM, BCGPGO} this gives examples of higher rank Teichm\"uller spaces.} 

The first two results generalize familiar properties of holonomies of hyperbolizations to $\T$-positive representations, and can be regarded as additional geometric justification for the denomination \emph{higher rank Teichm\"uller theory}. The third result gives the first proof that a connected component of a character variety in a higher rank Lie group consists only of discrete and injective representations not relying on Higgs bundles\footnote{Labourie's proof that Hitchin representations are injective relies on Higgs bundles techniques to guarantee that Hitchin representations are irreducible and thus non-parabolic \cite[Lemma 10.1]{Labourie-IM}.} or bounded cohomology. The new strategy we develop could also be applied to Hitchin representations in $\PSL(n,\R)$, after extracting a suitable formulation of a root versus weight collar lemma  from \cite[Proposition 2.12]{Lee-Zhang}. All our results also hold and are new for Hitchin representations into $\PO(p,p)$, see Appendix~\ref{a.POpp} for details.

\subsection*{$\T$-Positive representations are positively ratioed}
To describe our results in more detail, we need to recall some basic facts from Guichard--Wienhard's theory of $\T$-positivity in the special case of the group $\PO(p,q)$. For technical reasons, we assume from now on that $1< p< q$. However, the case $p=q$ works similarly, we discuss it in Appendix~\ref{a.POpp}. We denote by $\calF_{p-1}(\R^{p,q})$ the set of  partial flags of isotropic subspaces of $\R^{p,q}$, which consist of isotropic subspaces of all dimensions but the maximal one (see Section~\ref{ss.pos}). 
The $\T$-positive structure associated to $\PO(p,q)$ is defined with respect to the stabilizer of a point in $\calF_{p-1}(\R^{p,q})$. Since $\PO(p,q)$ has a  $\T$-positive structure,  one can define \emph{$\T$-positive $n$--tuples} in $\calF_{p-1}(\R^{p,q})$. This generalizes the notion of cyclically oriented $n$--tuple on $\R\P^1\simeq \di \H^2$, which is crucial in classical Teichm\"uller theory. A map $\xi:\mathbb{S}^1\to \calF_{p-1}(\R^{p,q})$ is \emph{$\T$-positive} if it maps positive $n$--tuples in $\mathbb{S}^1$ to positive $n$--tuples in $\calF_{p-1}(\R^{p,q})$. {{Let $\G$ be the fundamental group of a closed hyperbolic surface, so that in particular $\dg\simeq\mathbb{S}^1$. }}
Following \cite[Definition 5.3]{GWpositivity} we say that a representation $\rho:\G\to\PO(p,q)$ is \emph{$\Theta$-positive} if it admits an equivariant $\T$-positive map.
In this paper we will only consider $\T$-positive representations that are furthermore Anosov\footnote{In the aforementioned independent paper  Guichard--Labourie--Wienhard proved that a $\T$-positive representation is necessarily Anosov, so this assumption is not really needed \cite{GLW}. However since our two papers are independent we will keep the assumption.}; while any $\T$-positive map is automatically transverse, in order to be Anosov it needs to additionally be continuous and dynamics preserving (see Section~\ref{s.Anosov} for the precise definition).

The first result of the paper establishes an additional positivity property of $\T$-positive representations: they are \emph{positively ratioed} in the sense of Martone--Zhang (\cite[Definition 2.25]{MZ}, see also Definition~\ref{d.posrat}). This amounts to saying that the restriction of the natural cross ratio on the set of $k$-dimensional isotropic planes $\Is_k(\R^{p,q})$ to the image of the boundary map induces a positive cross ratio; a $\Theta$-positive boundary map induces boundary maps to the isotropic $k$-planes for $k<p$ through the natural projection $\calF_{p-1}(\R^{p,q})\to \Is_k(\R^{p,q})$. 

\begin{thmA}\label{thm.intro-positive}(Theorem~\ref{thm.pos ratioed})
	Let $\r:\G\to\PO(p,q)$ be a $\T$-positive Anosov representation, then $\rho$ is $k$-positively ratioed for all $k< p$.
\end{thmA}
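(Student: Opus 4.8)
The plan is to show that the cross ratio on $\mathrm{Iso}_k(\R^{p,q})$, when restricted to the image of the boundary map $\xi$ of a $\T$-positive Anosov representation, takes values of a single sign on all positively-oriented quadruples of $\di\Gamma \simeq \mathbb{S}^1$. First I would recall the explicit formula for the $k$-th cross ratio $\mathrm{cr}_k$ on flags in $\mathrm{Iso}_k(\R^{p,q})$, written in terms of exterior powers: for a quadruple $(x,y,z,w)$ of pairwise transverse $k$-dimensional isotropic subspaces one forms the ratio of suitable pairings of $\Lambda^k$-lines, so that $\mathrm{cr}_k(x,y,z,w)$ depends only on the four flags and is multiplicative under concatenation. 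The definition of $k$-positively ratioed (Definition \ref{d.posrat}) then asks that $\mathrm{cr}_k(\xi(x),\xi(y),\xi(z),\xi(w)) > 0$ (with the appropriate normalization convention) whenever $x,y,z,w$ are cyclically ordered on the circle.

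The core of the argument is to translate positivity of the configuration of flags $(\xi(x),\xi(y),\xi(z),\xi(w))$ in $\calF_{p-1}(\R^{p,q})$ — which holds by hypothesis, since $\xi$ is $\T$-positive and sends positively-oriented quadruples to positive quadruples — into positivity of the numerical quantity $\mathrm{cr}_k$. Here I would exploit the structure of the $\T$-positive semigroup: a positive quadruple of flags can be put, after applying an element of $\PO(p,q)$, into a normal form where $\xi(x)$ and $\xi(z)$ are a fixed transverse pair (a "standard" pair of opposite isotropic flags), and $\xi(y)$, $\xi(w)$ lie in the corresponding unipotent $\T$-positive semigroup $U^{>0}$, with $\xi(y)$ "between" in the positive order. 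The key step is then a positivity statement for the relevant matrix coefficients / minors of elements of $U^{>0}$: one needs that the entries controlling $\mathrm{cr}_k$ are totally-positivity-type minors, hence strictly positive. This should follow from Lusztig's theory of total positivity as adapted to $\PO(p,q)$ in Guichard–Wienhard, together with the explicit description of the positive semigroup in terms of the Anosov simple roots $\Theta = \{\alpha_1,\dots,\alpha_{p-1}\}$; the restriction $k < p$ is exactly what guarantees $\alpha_k \in \Theta$, so that the $k$-th cross ratio "sees" a root in the positive set.

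Concretely the steps are: (1) write down $\mathrm{cr}_k$ and verify its invariance/cocycle properties, reducing the claim to a single positively-oriented quadruple; (2) use transversality of $\xi$ (from Anosov) to ensure $\mathrm{cr}_k$ is well-defined and nonzero along the boundary curve, so by connectedness of the space of positive quadruples it suffices to check the sign at one configuration; (3) bring a positive quadruple into normal form and express $\mathrm{cr}_k(\xi(x),\xi(y),\xi(z),\xi(w))$ as a ratio of generalized minors of the unipotent elements representing $\xi(y),\xi(w)$ relative to the pair $(\xi(x),\xi(z))$; (4) invoke the total positivity of these minors on $U^{>0}$ to conclude the ratio is positive, and finally normalize the sign so that the convention of Definition \ref{d.posrat} is met, checking it against the case of a Fuchsian representation where $\mathrm{cr}_k$ reduces to a power of the classical cross ratio.

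The main obstacle I expect is step (3)–(4): making the passage from the abstract combinatorial positivity of a flag quadruple in $\calF_{p-1}$ to the concrete positivity of the specific minors appearing in $\mathrm{cr}_k$. The cross ratio is a priori built from the full flag data, whereas $\T$-positivity for $\PO(p,q)$ is governed by the non-standard positive structure on the unipotent radical (involving the $\SO(p-q)$-factor in a non-obvious way), so identifying precisely which minors control $\mathrm{cr}_k$ and checking they are among the positive ones requires care — this is where the bulk of the work lies, and where one must lean most heavily on the explicit parametrization of the $\T$-positive semigroup. A secondary subtlety is fixing the sign convention consistently for all $k < p$ simultaneously, which I would handle by a continuity/deformation argument reducing to the Fuchsian locus.
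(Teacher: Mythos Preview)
There is a genuine gap in your plan, and it lies at the very first step: you have the target inequality wrong. Definition~\ref{d.posrat} does not ask that $\cro_k(\xi(x),\xi(y),\xi(z),\xi(w))>0$; it asks that $\cro_k(\xi(x),\xi(y),\xi(z),\xi(w))\geq 1$ for every cyclically ordered quadruple. Since the cross ratio is already nonzero on transverse quadruples, the ``$>0$'' statement is purely a sign condition, and your connectedness argument in step~(2) would indeed establish it. But the ``$\geq 1$'' condition is quantitative---it is a monotonicity statement asserting that $\cro_k(x^k,y^k,\cdot,w^k)$ increases as the third entry moves from $y$ to $w$ along the positive direction---and it cannot be obtained by checking a sign on one configuration and propagating by continuity. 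Your steps~(3)--(4), expressing $\cro_k$ as a ratio of generalized minors and invoking their positivity on $U_\Theta^{>0}$, at best yield that numerator and denominator have the same sign; they do not compare their magnitudes. (There is a secondary issue: for $k=p-1$ the $\Theta$-positive structure on $\PO(p,q)$ is not a Lusztig total positivity structure---the elementary generators $E_{p-1}(\sfs)$ depend on a vector in the cone $c_J(V_J)$, not a positive scalar---so the appeal to ``total positivity of minors'' is not available off the shelf.)

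The paper's argument is genuinely different and supplies exactly the missing monotonicity. It uses that the boundary curve $\xi^k$ has Lipschitz image (Theorem~\ref{t.Lipschitz}), so by Rademacher the derivative $\dot\xi^k$ exists almost everywhere, and it shows that this derivative lies in a tangent cone $c_k^+(x)\subset T_{x^k}\Is_k(\R^{p,q})$ determined by the positive structure (Propositions~\ref{p.p-1derivative} and~\ref{p.kderivative}). The key computation is Proposition~\ref{prop.derivative cross ratio for positive triple: general}: the directional derivative of $\cro_k(z^k,x^k,\cdot,y^k)$ along any vector in $c_k^+(x)$ is strictly positive. Integrating via the fundamental theorem of calculus then gives $\cro_k\geq 1$. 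For $k<p-1$ this reduces to the classical projective cross ratio on $\R\P^1$ via property~$H_k$; for $k=p-1$ it reduces to an explicit $\PO(2,q-p+2)$ computation (Lemma~\ref{lem.derivative cross ratio for positive triple p=2}), where the positivity of $b_J$ on the cone $c_J(V_J)$ (Lemma~\ref{lem.pre.positiv bJ}) does the work. If you wanted to rescue a direct algebraic approach, you would have to prove the analogue of Proposition~\ref{p.poscr1} for arbitrary positive quadruples in $\calF_{p-1}$ and every $k$, which amounts to redoing this computation globally rather than infinitesimally.
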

Being positively ratioed implies that suitable Finsler length functions associated to the representations can be computed as intersections with a geodesic current \cite[Theorem 1.1]{MZ}. In turn this has a number of geometric consequences:  length shortening under surgery \cite[Corollary 1.3]{MZ}, relations between systole and entropy \cite[Corollary 1.2]{MZ}, as well as domains of discontinuity for the mapping class group action on suitable compactifications \cite[Corollary 1.6]{BIPP0}.

It follows from our results of  \cite{Beyrer-Pozzetti} that $\T$-positive representations into $\PO(p,q)$ are $k$--positively ratioed for $k\leq  p-2$; thus Theorem~\ref{thm.intro-positive} is only new for $k=p-1$. However in this case a new substantial difficulty needs to be overcome, as the boundary maps are only Lipschitz regular and not $C^1$. As a result, rather than relying on continuity of the derivative, we need to perform a much more careful analysis. 
{{ We also include a new proof for $k\leq  p-2$, since it is better adapted to the $\T$-positive structure and builds on two results of independent interest: First, for all $k<p$, we show that any $\T$-positive triple $(x,y,z)$ in $\calF_{p-1}(\R^{p,q})$ naturally defines a tangent cone $c^+_k(x)$ in ${\rm T}_x \Is_k(\R^{p,q})$, and the induced boundary map of a $\T$-positive representation containing $(x,y,z)$ is almost everywhere tangent to $c^+_k(x)$ (Proposition~\ref{p.p-1derivative}, see also Remark~\ref {r.derTheta}). This relies on the fact that the boundary map is Lipschitz regular as shown in \cite{PSW2}, and was inspired from discussions of the second author with Wienhard while working on \cite{PSW2}. Second, we prove that the cross ratio (infinitesimally) increases along those cones (Proposition~\ref{prop.derivative cross ratio for positive triple: general}).}}

\subsection*{Collar lemmas}
An important property of holonomies of hyperbolizations is the collar lemma \cite{Keen}: any simple curve $g$ admits an embedded collar neighbourhood of width logarithmic in the inverse of the hyperbolic length of $g$. In particular this has the algebraic consequence that  the length of any curve $h$ crossing $g$  must be at least the logarithm of the inverse of the length of  $g$. As a result, only simple curves can be very short in a hyperbolic structure. We generalize here the algebraic formulation of the collar lemma to $\T$-positive representations and prove an asymmetric strengthening: we show that the $k$-th Finsler length, which is the logarithm of the product of the first $k$ eigenvalues, of an element $g$ already controls the $k$-th eigenvalue gap of all linked elements $h$. 
We say that two elements $g,h\in\G$ are \emph{linked} if the attracting and repelling fixed points $h_{+},h_{-}\in \dg$ of $h$ are in different connected components of $\dg\backslash\{g_-,g_+\}$. Given $A\in\PO(p,q)$ we denote by $\lambda_1(A),\ldots,\lambda_{p+q}(A)$ the generalized eigenvalues of a lift of $A$ to ${\rm O}(p,q)$ ordered so that their moduli are non-decreasing, i.e. $|\lambda_i(A)|\geq |\lambda_{i+1}(A)|$.
\begin{thmA}\label{thm.intro-collar}(Theorem~\ref{t.collar})
	Let $\r:\G\to\PO(p,q)$ be a $\T$-positive Anosov representation and $g,h\in\G$ a linked pair. Then for any $k\leq p-1$
	\begin{align*}
		\left(1-\left|\frac{\lambda_{k+1}}{\lambda_{k}}(\r(h))\right|\right)^{-1}<\lambda_1^2 \cdots\lambda_k^2 (\r(g)).
	\end{align*}
\end{thmA}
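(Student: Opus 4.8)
The plan is to recast the inequality in terms of cross ratios on the isotropic Grassmannians and then feed in the positivity from Theorem~\ref{thm.pos ratioed}. Denote by $\xi=(\xi_1,\dots,\xi_{p-1})\colon\dg\to\calF_{p-1}(\R^{p,q})$ the positive Anosov boundary map of $\r$, and for $k\le p-1$ let $\cro_k$ be the cross ratio on $\Is_k(\R^{p,q})$ obtained by pairing a $k$--dimensional isotropic subspace with the orthogonal complement of another. A direct computation with decomposable $k$--vectors, using that for a loxodromic $\r(\g)$ the attracting $k$--plane $\xi_k(\g_+)$ and the repelling $(p+q-k)$--plane $\xi_{p+q-k}(\g_-)$ are orthogonal complements, identifies the $\cro_k$--period of $\r(\g)$ with $\lambda_1^2\cdots\lambda_k^2(\r(\g))$; so the right hand side of the statement is precisely the $\cro_k$--period of $\r(g)$. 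Since $\r$ is $k$--positively ratioed by Theorem~\ref{thm.pos ratioed}, the pullback of $\cro_k$ along $\xi$ is a positive cross ratio, hence, with the standard normalization, it is $>1$ on the image of every $4$--tuple of $\dg$ in the correct cyclic order. Finally, because $g$ and $h$ are linked, after relabelling the cyclic order of $g_\pm,h_\pm$ on $\dg\simeq\mathbb S^1$ is $(g_+,h_+,g_-,h_-)$; Anosovness makes the four flags $\xi(g_\pm),\xi(h_\pm)$ pairwise transverse, and positivity of $\xi$ puts them in positive position.

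The technical heart is then to bound the $\cro_k$--period $\ell_k(g)$ from below by $\bigl(1-|\tfrac{\lambda_{k+1}}{\lambda_k}|(\r(h))\bigr)^{-1}$, where $|\tfrac{\lambda_{k+1}}{\lambda_k}|(\r(h))=e^{-\alpha_k(\r(h))}$ is exactly the rate at which $\r(h)$ contracts $\Is_k(\R^{p,q})$ towards $\xi_k(h_+)$. I would get this by expanding, through the cocycle identity for $\cro_k$, the positive cross ratio of the quadruple $\xi(g_\pm),\xi(h_\pm)$ as a convergent infinite product over the $\r(h)$--orbit of $\xi g_+$, which accumulates onto $\xi h_+$: each factor is $\ge 1$ because by Proposition~\ref{prop.derivative cross ratio for positive triple: general} the cross ratio increases along the positive tangent cones, and the factor corresponding to $\r(h)^n\xi g_+$ is of order $|\tfrac{\lambda_{k+1}}{\lambda_k}|^{\,n}(\r(h))$, since the boundary curve $\xi_k$ is tangent at $h_+$ to the extremal direction of $c^+_k(h_+)$, which is the slow contraction direction of $\r(h)$ there (Proposition~\ref{p.p-1derivative}). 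Re-organising these factors into a geometric series, together with the (formal) relations between this positive cross ratio, the $\r(g)$--period and linkedness, should yield
\begin{align*}
\ell_k(g)\ \ge\ 1+\sum_{n\ge1}\Bigl|\tfrac{\lambda_{k+1}}{\lambda_k}\Bigr|^{\,n}(\r(h))\ =\ \Bigl(1-\Bigl|\tfrac{\lambda_{k+1}}{\lambda_k}\Bigr|(\r(h))\Bigr)^{-1},
\end{align*}
with the inequality strict because infinitely many strictly positive contributions are discarded along the way.

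The step I expect to be the main obstacle is making the size estimate on the factors \emph{sharp} --- each must dominate $|\tfrac{\lambda_{k+1}}{\lambda_k}|^{\,n}(\r(h))$ with constant exactly $1$, not merely up to a positive multiple --- which forces a careful analysis of the cross ratio near the degenerate configuration in which two of the four flags collide, and is exactly where the \emph{positive} position of $\xi(g_\pm),\xi(h_\pm)$, rather than bare transversality, must be exploited. This is also what separates the new case $k=p-1$ from $k\le p-2$: for $k=p-1$ the ratio $|\tfrac{\lambda_{p}}{\lambda_{p-1}}|(\r(h))$ involves the $p$--th eigenvalue of $\r(h)$, which is not recorded by $\xi_{p-1}(h_+)$ but only by the maximal isotropic space $\xi_p(h_+)\notin\calF_{p-1}(\R^{p,q})$; one recovers it, and controls the relevant cross ratio and contraction rate, from the positive curve induced in the signature $(1,q-p+1)$ quadratic space $\xi_{p-1}(h_+)^{\perp}/\xi_{p-1}(h_+)$, which is precisely the content of the tangent--cone results of Proposition~\ref{p.p-1derivative} (built on the Lipschitz regularity of $\xi$ from \cite{PSW2}) and Proposition~\ref{prop.derivative cross ratio for positive triple: general}. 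For $k\le p-2$ the $(k+1)$--st flag lies in $\calF_{p-1}(\R^{p,q})$, and one may alternatively extract the same estimate directly from the positively ratioed cross ratios on the lower isotropic Grassmannians, as in \cite{Beyrer-Pozzetti}.
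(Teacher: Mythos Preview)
Your setup is fine: the period identity $\cro_k(g_-^k,x^k,gx^k,g_+^k)=\lambda_1^2\cdots\lambda_k^2(\rho(g))$ is Lemma~\ref{lem.cro-period}, and the reduction $\cro_k(h_-^k,h_+^k,gh_+^k,g_+^k)<\ell_k(g)$ via positive ratioedness is exactly what the paper does in the last step. But the core of the argument---your infinite-product/geometric-series estimate---is not a proof, and I do not see how to make it one. You need each factor $c_n$ in your telescoping product to satisfy $c_n-1\ge |\lambda_{k+1}/\lambda_k|^n$ \emph{with constant exactly $1$}; you correctly flag this as the obstacle, but you give no mechanism for it. Proposition~\ref{p.p-1derivative} only places $\dot\xi^{p-1}$ in the cone $c^+_{p-1}$, not on an extremal ray, so your claim that ``$\xi_k$ is tangent at $h_+$ to the extremal direction of $c^+_k(h_+)$, which is the slow contraction direction of $\rho(h)$'' is unjustified for $k=p-1$; without it there is no reason the $n$-th factor should have first-order size $|\lambda_{k+1}/\lambda_k|^n$ rather than $C\cdot|\lambda_{k+1}/\lambda_k|^n$ for some $C<1$ depending on where in the cone the tangent lies. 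Even granting the factor estimate, the passage from $\prod_n(1+r^n)$ to $1+\sum_n r^n=(1-r)^{-1}$ requires the AM--GM-type bound $\prod(1+a_n)\ge 1+\sum a_n$, which is fine, but then the indexing and the ``reorganising'' you allude to need to be made precise; as written one cannot check whether the $n=0$ term already overshoots or undershoots.

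The paper takes a completely different route that avoids any asymptotic analysis. It introduces the \emph{hybrid flag} $h_-\vartriangleleft_k g_+$ (Definition~\ref{def.hybrid flag}), a flag that agrees with $h_-$ except in dimension $k$, where it borrows a line from $g_+$. Step~1 is the sharp inequality
\[
\Bigl(1-\Bigl|\tfrac{\lambda_{k+1}}{\lambda_k}(\rho(h))\Bigr|\Bigr)^{-1}<\cro_k\bigl(h_-^k,h_+^k,gh_+^k,(h_-\vartriangleleft_k g_+)^k\bigr),
\]
which for $k<p-1$ comes from property $H_k$ (Proposition~\ref{prop.step 1 small k}), and for $k=p-1$ is reduced via Proposition~\ref{prop.projection of cross ratio} to a single explicit $\PO(2,m)$ computation (Proposition~\ref{p.collar2,n}) in the quotient $h_-^{q+2}/h_-^{p-2}$ of signature $(2,q-p+2)$---not the signature $(1,q-p+1)$ quotient you mention. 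Step~2 replaces the hybrid flag by $g_+$ itself: Proposition~\ref{prop.k-projection is positive} shows that if $(w,x,y,z)$ is positive then $(x\vartriangleleft_k w,y,z)$ is still positive, and then the monotonicity of the cross ratio along the cones (Proposition~\ref{prop.derivative cross ratio for positive triple: general}) gives $\cro_k(h_-^k,h_+^k,gh_+^k,(h_-\vartriangleleft_k g_+)^k)\le\cro_k(h_-^k,h_+^k,gh_+^k,g_+^k)$. The hybrid flag is precisely the device that produces the sharp constant you are missing: it isolates, in a single finite cross ratio, exactly the contribution of the $k$-th root gap of $h$.
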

{{
We can rephrase the statement in Lie theoretic terms: Let $\alpha_k(\r(h))$ be the $k$-th  \emph{restricted root} of $\PO(p,q)$ (for the standard numeration) applied to the Jordan projection of $\r(h)$, so that $\alpha_k(\r(h))=\log\left|\frac{\lambda_{k}}{\lambda_{k+1}}(\r(h))\right|$, and let $\omega_k(\r(g))$ be the $k$-th \emph{fundamental weight}  applied to the Jordan projection of $\r(g)$, namely $\omega_k(\r(g))=\log(\lambda_1^2 \cdots\lambda_k^2 (\r(g)))$.  Theorem~\ref{thm.intro-collar}, after elementary algebraic operations, states that  for any linked pair $g,h$ and $k\leq p-1$,
\begin{equation}\label{e.col2}
	\left(e^{\alpha_k(\rho(h))}-1\right)\left(e^{\omega_k(\rho(g))}-1\right)>1.
\end{equation}
{In higher rank many measures of the magnitude of an element play an important role in understanding geometric features of actions, and different measures generalize different properties of hyperbolizations. On the one hand the fundamental weights $\omega_k$ describe the translation length on the symmetric space with respect to suitable Finsler distances \cite{KLP}, and, as mentioned above, behave like the hyperbolic length function under surgery for representations in higher rank Teichm\"uller spaces. On the other hand the roots $\alpha_k$, albeit not being induced by a distance, are, for Anosov representations, coarsely equivalent to the stable length with respect to any generating system \cite{PK}, and their entropy is constant and equal to one on higher rank Teichm\"uller spaces \cite{PS,PSW2}.  Adding to this, {{Theorem~\ref{thm.intro-collar} encodes a powerful generalization of another feature of holonomies of hyperbolizations -- the collar lemma -- to $\T$-positive representations into $\PO(p,q)$; analogous results were previously established for  Hitchin representations \cite{Lee-Zhang}, maximal representations \cite{BP} and representations that satisfy some partial hyperconvexity properties, a class containing representations that are not $\T$-positive, and does not always consist of connected components of the character variety \cite{Beyrer-Pozzetti}}.}

{{
The asymmetry comparing two different length functions in Theorem~\ref{thm.intro-collar} is key and indicates that both  $\alpha_k$ and $\omega_k$ play an important role in the study of $\T$-positive representations. Equation \eqref{e.col2}  implies that also a weight versus weight collar lemma 
holds:
for any linked pair $g,h$ and $k\leq p-1$ we have
\begin{equation*}
	\left(e^{\omega_k(\rho(h))}-1\right)\left(e^{\omega_k(\rho(g))}-1\right)>1.
\end{equation*}
This generalizes the collar lemma from \cite{CTT} in the case of $\PO(2,q)$.
Since it is not expected that a root versus root collar lemma holds, cfr. \cite[Section 7]{Beyrer-Pozzetti},  the asymmetric version is the strongest version to hope for.  We will indeed need this strong version, as in the proof of Theorem~\ref{thm.C-intro} it is crucial that the collar lemma allows us to bound the root length from below.

Theorem~\ref{thm.intro-collar} follows our the results in \cite{Beyrer-Pozzetti} for $k<p-2$, and is  new for $k=p-2,\,p-1$. 
{{To deal with the two additional cases we need a different approach compared to \cite{Beyrer-Pozzetti} since $\Theta$-positive representations are in general not $p$--Anosov and don't satisfy additional transversality properties. There are three key new tools, building on the $\T$-positive structure, that allow us to circumvent this problem. First we need that the cross ratio is increasing along cones defined by $\T$-positive triples, as mentioned above. 
The second tool is the construction of a \emph{hybrid} flag associated to a pair of transverse flags in $\calF_{p-1}$  (Definition~\ref{def.hybrid flag}). In Proposition~\ref{prop.k-projection is positive} we establish in which sense positivity is preserved under the hybrid construction. This result is of independent interest, and we believe it will be useful in further study of geometric properties of $\T$-positive representations and more general classes of Anosov representations. 
With Proposition~\ref{prop.k-projection is positive} at hand we can reduce the proof of Theorem~\ref{thm.intro-collar} to an inequality for the $\T$-positive structure for $\PO(2,q)$, which we establish in Lemma~\ref{p.collar2,n}, and which is the third key building block of our proof. This third step (i.e. Lemma~\ref{p.collar2,n}) constitutes the main step in the proof of a root versus weight collar lemma for maximal representations in $\PO(2,q)$ (Theorem~\ref{t.collarmax}), which is substantially different to the available proof for $\Sp(2n,\R)$ \cite{BP}.
		
		As it does not cost any extra effort we include proofs of the collar lemmas for the cases $k<p-2$ as well, which are simpler and more direct than the ones in \cite{Beyrer-Pozzetti}, making this work  independent from \cite{Beyrer-Pozzetti}.}}

\subsection*{$\T$-Positive Anosov representations form connected components}
A key advantage of a collar lemma controlling eigenvalue gaps, is that it guarantees that a limit of representations satisfying such collar lemma remains proximal. This is particularly important since we showed in the companion paper \cite{Beyrer-Pozzetti2} that a proximal limit of positively ratioed representations admits  continuous, dynamics preserving equivariant boundary maps \cite[Theorem B]{Beyrer-Pozzetti2}. Since thanks to Theorems~\ref{thm.intro-positive} and~\ref{thm.intro-collar} we can apply  such result,  we obtain:
\begin{thmA}\label{thm.C-intro}
	The set of $\T$-positive Anosov representations is closed in $\Hom(\G, \PO(p,q))$.
\end{thmA} 
 We show that a limit of $\T$-positive Anosov representations is $\Theta$-positive Anosov in two steps. First we use Theorems~\ref{thm.intro-positive} and~\ref{thm.intro-collar} to show that  \cite[Theorem B]{Beyrer-Pozzetti2} is applicable. This guarantees that the limit representation admits continuous, dynamics preserving equivariant boundary maps, which we prove being additionally $\Theta$-positive using properties of $\Theta$-positivity. In the second step of the proof we show that the representation is Anosov, by using a criterion due to Gueritaud--Guichard--Kassel--Wienhard \cite{GGKW} based on eigenvalue gap growth. Here we use once again the positivity of hybrid flags, and we introduce a new idea, allowing us to read the eigenvalue gap of an element as a cross ratio involving an hybrid flag; we  use this to control the growth of the eigenvalue gaps 
	(see Proposition~\ref{p.positiveAnosov} for details).

{{Since by the work of Guichard--Wienhard positivity of $n$-tuples is an open condition \cite[Theorem 4.7]{GWpositivity} (see also Corollary~\ref{lem.deform positive tuples} below),}} and Anosov representations are open \cite[Theorem 1.2]{Guichard-Wienhard-IM}, the set of $\T$-positive Anosov representations is open in the character variety. A consequence of Theorem~\ref{thm.C-intro} is thus the following.
\begin{cor}\label{cor.intro}(Corollary~\ref{c.higherTM spaces})
	Being $\T$-positive Anosov is an open and closed condition in the character variety
	$$\Xi (\G,\PO(p,q)):=\quotient {\Homr(\G,\PO(p,q))}{\PO(p,q)}$$
	and thus constitutes connected components.
\end{cor}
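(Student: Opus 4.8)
The plan is to show that the set of $\T$--positive Anosov representations is simultaneously open and closed in $\Xi(\PO(p,q))$; since a clopen subset of any topological space is a union of connected components, this yields the statement. I will argue first in $\Hom(\G,\PO(p,q))$ and then transfer the two properties to the quotient.

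Write $\mathcal P\subseteq\Hom(\G,\PO(p,q))$ for the set of $\T$--positive Anosov representations. Openness of $\mathcal P$ is the remark already made above: the Anosov locus is open, over it the equivariant boundary map $\xi_\rho\colon\dg\to\calF_{p-1}(\R^{p,q})$ depends continuously on $\rho$, and, positivity of $n$--tuples in $\calF_{p-1}(\R^{p,q})$ being an open condition by the work of Guichard-Wienhard, being $\T$--positive is an open condition among Anosov representations. Closedness of $\mathcal P$ in $\Hom(\G,\PO(p,q))$ is precisely Theorem \ref{t.closdness of positivity}: a limit in $\Hom(\G,\PO(p,q))$ of $\T$--positive Anosov representations is again $\T$--positive Anosov.

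To descend to $\Xi(\PO(p,q))$, note that $\mathcal P$ is $\PO(p,q)$--invariant and, since Anosov representations are reductive, is contained in $\Homr(\G,\PO(p,q))$; thus $\mathcal P$ is a $\PO(p,q)$--saturated, open and closed subset of $\Homr(\G,\PO(p,q))$. The orbit projection $q\colon\Homr(\G,\PO(p,q))\to\Xi(\PO(p,q))$ is an open map, so $q(\mathcal P)$ is open; and since $q^{-1}(q(\mathcal P))=\mathcal P$ by saturation, and $\mathcal P$ is closed, $q(\mathcal P)$ is closed. Hence the locus of conjugacy classes of $\T$--positive Anosov representations is clopen in $\Xi(\PO(p,q))$, and therefore a union of connected components.

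All of the mathematical substance is carried by Theorem \ref{t.closdness of positivity} (itself built on the collar lemma Theorem \ref{t.collar}, which forces proximality in the limit, and on the boundary map of \cite{Beyrer-Pozzetti2}), together with the openness of positivity; what remains is point-set topology. The only points one must not overlook are that $\mathcal P$ is saturated — so that closedness passes to the quotient — and that it genuinely sits inside $\Homr(\G,\PO(p,q))$; both are immediate, so I do not expect any real obstacle here beyond the ones already overcome.
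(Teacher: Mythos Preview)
Your proof is correct and follows essentially the same approach as the paper: openness via the openness of positivity among Anosov representations (the paper's Proposition~\ref{cor.positive clopen under Anosov}), closedness via Theorem~\ref{t.closdness of positivity}, and passage to the quotient by conjugation invariance. You are slightly more explicit than the paper about the point-set topology of the quotient (saturation, openness of the orbit map), but the substance is identical.
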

{Recall that the character variety $\Xi (\G,\PO(p,q))$ can be defined equivalently as the semialgebraic GIT quotient of the representation space $\Hom(\G,\PO(p,q))$ by the $\PO(p,q)$ action by conjugation \cite{RS, BIPPrsp}, or  by the quotient of the subset $\Homr(\G,\PO(p,q))\subset \Hom(\G,\PO(p,q))$ consisting of reductive representations, on which the $\PO(p,q)$-action is separated \cite[S. 4.1]{APcomp}.}
\begin{remark}\label{rem.pos general groups}
A representation into $\OO(p,q)$\footnote{Observe that, when $p+q$ is odd, $\SO(p, q) = \PO(p, q)$ and $\OO(p, q) = \SO(p, q) \times \Z/2\Z$, but that things are more subtle when $p+q$ is even, and delicate liftability questions arise.}, a group that also naturally acts on $\calF_{p-1}(\R^{p,q})$, is $\T$-positive if and only if its projectivization is positive. In particular all the results in this work hold also for $\T$-positive representations in $\OO(p,q)$. The case where the image is in $\SO(p,q)$ is particularly interesting, since for those groups connected components of character varieties have been studied in detail with Higgs bundles techniques \cite{ABC-IM} (see Remark~\ref{rem.ht-components}).
\end{remark}

{Combining Corollary~\ref{cor.intro}  with the work  \cite[Theorem 7.6 and Proposition 7.13]{ABC-IM} we obtain that a conjugacy class of reductive representations $\rho:\G\to\SO(p,q)$ consists of $\Theta$-positive Anosov representations if and only if it belongs to one of the special connected components parametrized in \cite[Theorem 4.1]{ABC-IM}. This settles
	\begin{thm*}[cfr. {\cite[Section 7.2]{ABC-IM}}]
		Let $X$ be the choice of a Riemann surface structure on $S$. The subset of $\Xi (\G,\SO(p,q))$ consisting of conjugacy classes of reductive $\Theta$-positive Anosov representations is parametrized by 
		$$\mathcal M_{K^p}(\SO(1,q-p+1))\times\bigoplus_{j=1}^{p-1}H^0(K^{2j})$$
		where $K$ is the canonical bundle of $X$, $\mathcal M_{K^p}(SO(1,q-p+1))$ denotes the moduli space of $K^p$-twisted $\SO(1,q-p+1)$-Higgs bundles on $X$ and $ H^0(K^{2j})$ is the vector space of holomorphic sections of $K^{2j}$.
	\end{thm*}	
}
{
\subsection*{$\T$-positive representations of open surfaces}
It is possible to define good notions of $\T$-positive representations of more general surfaces that are not necessarily compact, by requiring the existence of a positive boundary map defined on some associated cyclically ordered set such as the circle with the action induced by a finite volume hyperbolization, or the subset of cusps (cfr. \cite{FG} for representations in split real Lie groups and \cite{BIW} for maximal representations in Hermitian Lie groups). We expect that Theorem~\ref{thm.intro-collar} works verbatim in this setting, and that Theorem~\ref{thm.intro-positive} and Theorem~\ref{thm.C-intro} admit suitable generalizations, respectively that the natural pullback of the cross ratio is positive, and that the space of $\T$-positive representations forms connected components of relative character varieties. 

Some of our techniques require however that the surface is closed: we use differential arguments based on the regularity of the image of the boundary map, which cannot be directly generalized to the open surfaces where, in most cases, the boundary map is not even continuous. While new ideas are needed to treat the general case, we expect that our general strategy, as well as some of the tools  we develop, such as the study of hybrid flags, and the discovery of cross ratios that read eigenvalue gaps, will also be precious for dealing with open surfaces.}

\subsection*{Relation with Guichard--Wienhard's work}
{ We build on Guichard--Wienhard's theory of positivity for general Lie groups $\sf G$, which generalizes aspects of Lusztig positivity \cite{Lus} to this setting. We freely use results from \cite{GWPaper} where general properties of positive $n$-tuples of flags, as well as of the positive semigroup are established.  In a previous version of this work we re-established many properties of $\T$-positive triples only building on results announced in \cite{GWpositivity}, but now that \cite{GWPaper} is available we decided to avoid duplicates.}
As already remarked, we also build upon \cite[Theorem D]{PSW2},  by the second author in collaboration with Sambarino and Wienhard, and the ideas behind positivity that are crucial in that proof.

{{Our work is  independent from the work of Guichard--Labourie--Wienhard, who proved that there exist higher rank Teichm\"uller components consisting of $\T$-positive representations in any Lie group $\sf G$ admitting a $\T$-positive structure \cite{GLW}. In their paper they  prove that general $\Theta$-positive representations are necessarily $\Theta$-Anosov, and  form connected components of the subset of representations that are not contained in a parabolic subgroup.
Since we show that $\Theta$-positive Anosov representations are closed in the whole representation variety, our papers are complementary and prove together \cite[Conjecture 5.4]{GWpositivity} for the group $\PO(p,q)$. Combining our results with \cite{ABC-IM} we obtain a stronger statement than \cite[Conjecture 5.4]{GWpositivity}: for $1< p< q$ the only higher rank Teichm\"uller components of the $\PO(p,q)$-character variety are the ones which contain a $\T$-positive representations (see also Remark~\ref{rem.ht-components}).}}

}
 \subsection*{Acknowledgements}
Beatrice Pozzetti is indebted to Anna Wienhard for sharing her ideas on $\Theta$-positive representations, and numerous discussions that helped her shaping and sharpening her understanding of these representations. We thank the anonymous referees for their insightful remarks which helped improving the exposition.

%%%%%%%%%%%%%%%

 \section{Preliminaries}\label{s.prelims}
\addtocontents{toc}{\protect\setcounter{tocdepth}{2}}
We list here some notation that we keep throughout this work\\

\noindent{\it The group $\G$}
\begin{itemize}
	%\leavevmode
	\item  $\G$ denotes a \emph{surface group}, i.e. the fundamental group of a closed connected orientable surface of genus at least 2. Its \emph{Gromov boundary} $\dg$ is homeomorphic to the circle $\mathbb{S}^1$.
	
	\item $\dg^{(j)}$ denotes the set of $j$--tuples of $\dg$ consisting of pairwise distinct points.
	\item$\dg^{[j]}\subset\dg^{(j)}$ denotes the set of cyclically ordered $j$--tuples, namely the $j$--tulples that are either positively or negatively ordered for the standard cyclic orientation of the circle.
	
	\item $(x,y)_z\subset \dg$ is the interval of $\dg\backslash\{x,y\}$ that does \emph{not} contain $z$, for $(x,y,z)\in \dg^{(3)}$ \cite{MZ}.
\end{itemize}

\noindent{\it Isotropic subspaces and eigenvalues}

\begin{itemize}
	\item $\R^{p,q}$ is the vector space $\R^{p+q}$ equipped with a non-degenerate symmetric bilinear form $Q$ of signature $(p,q)$ -- except from the appendix, we will assume $1<p< q$. We denote by $\cdot^{\perp}$ the orthogonal complement with respect to $Q$. The isometry group of $\R^{p,q}$ is denoted by $\OO(p,q)$.

	\item{{ $\lambda_1(g),\ldots,\lambda_{p+q}(g)$ denote the  eigenvalues of an element $g\in\OO(p,q)$ counted with multiplicity and ordered so that their absolute values are  non-increasing, i.e. $|\lambda_i(g)|\geq |\lambda_{i+1}(g)|$. Since $g\in \OO(p,q)$, it follows that $\lambda_i(g)=\lambda^{-1}_{p+q+1-i}(g)$. If $g\in\PO(p,q)$ denote by $\tilde{g}\in\OO(p,q)$ a lift, then $$\frac{\lambda_i}{\lambda_{i+1}}(g):=\frac{\lambda_i(\tilde{g})}{\lambda_{i+1}(\tilde{g})}$$
			does not depend on the choice of the lift.}}

	\item  $\Is_k(\R^{p,q})$ denotes the set of  \emph{isotropic $k$-planes} in $\R^{p,q}$, i.e. $k$--planes on which the form $Q$ is identically zero. Given $V\in \Is_k(\R^{p,q})$ and $W\in \Is_k(\R^{p,q})$ we say that $V$ and $W$ are \emph{transverse} if the sum $V+ W^{\perp}$ is direct (equivalently $V^{\perp}+W$ is direct); we denote this by $V\tv W$. If two subspaces are not transverse we write $V\ntv W$.

\end{itemize}

\subsection{Anosov representations}\label{s.Anosov}
The \emph{stable length} of $\g\in\G$ is $|\g|_{\infty}:=\lim_{n\to\infty} |\g^n|_{\G}\slash n$, where $|\cdot|_{\G}$ is a fixed word metric on the Cayley graph of $\G$. 
Anosov representations in $\PO(p,q)$ admit the following characterization \cite[Theorem 1.7]{GGKW} which we will use as a definition.

\begin{defn}\label{d.Anosov}
	A homomorphism $\r:\G\to \PO(p,q)$ is  \emph{$k$--Anosov}  for $k\in\{1,\ldots,p\}$, if there exists a $\r$-equivariant continuous boundary map $\xi^k:\dg\to \Is_k(\R^{p,q})$ such that
	\begin{enumerate}
		\item $\xi^k(x)\tv \xi^{k}(y)$ for all $x\neq y\in \dg$
		\item $\xi^k$ is \emph{dynamics preserving}, i.e. for every infinite order element $\g\in\G$ with attracting fixed point $\g_{+}\in\dg$ we have that $\xi^k(\g_{+})$ is an attracting fixed point for the action of $\r(\g)$ on $\Is_k(\R^{p,q})$.
		\item $\left|\frac{\lambda_k}{\lambda_{k+1}}(\r(\g_i))\right|\to \infty$ if $|\g_i|_{\infty}\to\infty$.
	\end{enumerate}
\end{defn}

Anosov representations have many interesting geometric and dynamical properties, for instance they are discrete and faithful and form an open subset of  $\Hom(\G,\PO(p,q))$. 

\begin{remark}
	Anosov representations are defined for general reductive Lie groups;  we do not introduce the general theory here as we only work with $\PO(p,q)$. After the natural inclusion $\PO(p,q)\to \PGL(\R^{p,q})$, a representation $\r:\G\to \PO(p,q)$ is $k$--Anosov if and only if it is $k$--Anosov in $\PGL(\R^{p,q})$. In particular any result for $k$--Anosov representations in $\PGL(\R^{p,q})$ applies to our context.
\end{remark}
We say that a representation is  \emph{$\T$-Anosov} if it is $k$--Anosov for all $k=1,\ldots,p-1$. Since the boundary maps of Anosov representations are dynamics preserving,  for all non-trivial $\g\in \G$ it holds  $\xi^k(\g_+)\subset\xi^{k+1}(\g_+)$. Thus for any $\T$-Anosov representation the continuity of the boundary map and the density of the fixed points in $\dg$ imply that the map 
$$\xi=(\xi^1,\ldots,\xi^{p-1}):\dg\to\calF_{p-1}$$
  is well defined, equivariant, continuous and transverse. Here, and in the rest of the paper, $\calF_{p-1}$ denotes the partial flag manifold consisting of flags of isotropic subspaces of dimensions $1,\ldots, p-1$.
\begin{notation}
	We will write $x_{\r}^k$ and  $x^k$  for $\xi^k(x)$, where $\r$ is a $k$--Anosov representation and $\xi^k$ the associated boundary map.
	Similarly we may write $\g_{\r}$ instead of $\rho(\g)$ for  $\g\in \G$.
	Moreover we will write $x^{p+q-k}$ for $\xi^k(x)^{\perp}$.
\end{notation}

\subsection{Cross ratios}\label{s.cr}
In this paper we use the cross ratios defined on
\begin{align*}
	\calA_k:=\{(V_1,W_1,W_2,V_2) \in \Is_k^4(\R^{p,q})\:|\: V_i\tv W_j, i,j=1,2 \}.
\end{align*}

\begin{defn}\label{d.cr}
	The cross ratio $\cro_k:\calA_k\to \R\backslash\{0\}$ is defined by %for $(V_1,W_1,W_2,V_2)\in\calA_k$ by
	\begin{align*}
		\cro_k (V_1,W_1,W_2,V_2):=\frac{V_1\wedge W^{\perp}_2}{V_1\wedge W^{\perp}_1} \frac{V_2\wedge W^{\perp}_1}{V_2\wedge W^{\perp}_2}.
	\end{align*}
	Here $V_i\,\wedge W^{\perp}_j$ denotes the element $v_1\wedge \ldots\wedge v_k \wedge w_1\wedge \ldots \wedge w_{p+q-k}\in \wedge^{p+q} \R^{p+q} \simeq \R$ for bases $\{v_1,\ldots, v_k\},\{w_1,\ldots, w_{p+q-k}\}$  of $V_i$ and $W^{\perp}_j$, and a fixed identification $\wedge^{p+q} \R^{p+q} \simeq \R$. %We use the convention $\frac{a}{0} :=\infty$ for any non-zero $a\in\R$. 
	The value of $\cro_k$ is independent of all choices made.
\end{defn}

If $(V_1,W_1,W_2,V_2)\in\calA_1\subset \Is^4_1(\R^{p,q})$, the cross ratio $\cro_1$ can be expressed as
$$\cro_1 (V_1,W_1,W_2,V_1):=\frac{Q(\tilde{V}_1, \tilde{W}_2)}{Q(\tilde{V}_1, \tilde{W}_1)} \frac{Q(\tilde{V}_2, \tilde{W}_1)}{Q(\tilde{V}_2, \tilde{W}_2)},$$
where $\tilde{V}_i,\tilde{W}_j \in \R^{p,q}\backslash\{0\}$ are  such that $\tilde{V}_i\in V_i$ and $\tilde{W}_i\in W_i$.

The following properties are classical and easy to verify.
\begin{prop}\label{prop.property of grassmannian cro}
	Let $V_1,V_2,V_3\in\Is_k(\R^{p,q})$ and $W_1,W_2,W_3\in\Is_k(\R^{p,q})$. Then whenever all quantities are defined we have
	\begin{enumerate}
		\item $\cro_k(V_1,W_1,W_2,V_2)^{-1} =\cro_k (V_2,W_1,W_2,V_1)=\cro_k (V_1,W_2,W_1,V_2)$, in particular $\cro_k(V_1,W_1,W_1,V_2)=1$
%		\item $\cro_k (V_1,W_1,W_2,V_2)=\cro_k(W_1,V_1,V_2,W_2)$
		\item $\cro_k (V_1,W_1,W_2,V_2)\cdot\cro_k (V_2,W_1,W_2,V_3)=\cro_k (V_1,W_1,W_2,V_3)$
		\item $\cro_k (V_1,W_1,W_2,V_2)\cdot\cro_k (V_1,W_2,W_3,V_2)=\cro_k (V_1,W_1,W_3,V_2)$

		\item $\cro_k (V_1,W_1,W_2,V_2)=\cro_k (g V_1,g W_1,g W_2,g V_2)\quad  \forall g\in\PO(p,q)$.
		\item The cross ratio is algebraic.
	\end{enumerate}
	The identities $(3)$ and $(4)$ will be called \emph{cocycle identities}. 
\end{prop}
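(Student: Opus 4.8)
The plan is to reduce everything to one elementary linear‑algebra lemma and then to bookkeeping with the defining formula; properties (1), (3) and (4) are pure cancellation and require no new ideas.

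First I would record well‑definedness together with algebraicity (6). Fixing the identification $\wedge^{p+q}\R^{p+q}\simeq\R$ once and for all, the assignment $(V,U)\mapsto V\wedge U$ is, in Plücker coordinates on $\Gr(k,p+q)\times\Gr(p+q-k,p+q)$, the bilinear multiplication $\wedge^k\R^{p+q}\times\wedge^{p+q-k}\R^{p+q}\to\wedge^{p+q}\R^{p+q}\simeq\R$, hence a bihomogeneous polynomial that vanishes exactly when the two subspaces fail to be complementary; since $W\mapsto W^{\perp}$ is the linear (hence algebraic) isomorphism induced by $Q$, each of the four quantities $V_i\wedge W_j^{\perp}$ in the definition is a polynomial in the Plücker coordinates of the flags, nonzero precisely on $\calA_k$. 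Rescaling the chosen basis of some $V_i$ or $W_j^{\perp}$, or rescaling the identification with $\R$, multiplies numerator and denominator by the same scalar, so $\cro_k$ is a well‑defined $\R\backslash\{0\}$‑valued rational function with nonvanishing denominator on $\calA_k$, i.e.\ algebraic; this is (6) and the independence of choices.

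The one substantive point is the following sign lemma: \emph{for every transverse pair of isotropic $k$‑planes $V,W\subset\R^{p,q}$ one has $V\wedge W^{\perp}=(-1)^{k}\,W\wedge V^{\perp}$.} I would prove it with a basis adapted to the pair: transversality means $Q$ pairs $V$ and $W$ perfectly, so there are bases $e_1,\dots,e_k$ of $V$ and $f_1,\dots,f_k$ of $W$ with $Q(e_i,f_j)=\delta_{ij}$; then $V\oplus W$ is nondegenerate, $\R^{p+q}=(V\oplus W)\oplus(V\oplus W)^{\perp}$, and one checks $W^{\perp}=W\oplus(V\oplus W)^{\perp}$, $V^{\perp}=V\oplus(V\oplus W)^{\perp}$. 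Choosing a basis $g_1,\dots,g_{p+q-2k}$ of $(V\oplus W)^{\perp}$, for these bases $V\wedge W^{\perp}=e_1\wedge\cdots\wedge e_k\wedge f_1\wedge\cdots\wedge f_k\wedge g_1\wedge\cdots\wedge g_{p+q-2k}$, while $W\wedge V^{\perp}$ is the same wedge with the $e$‑ and $f$‑blocks swapped, which costs the sign $(-1)^{k^{2}}=(-1)^{k}$. Since any product of $V_i\wedge W_j^{\perp}$‑terms in which each subspace and each complement appears equally often in numerator and denominator is independent of all basis choices (and of the identification with $\R$), this identity may be freely substituted into such products.

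With the lemma in hand the rest is immediate. For (1), interchanging $V_1\leftrightarrow V_2$ or $W_1\leftrightarrow W_2$ visibly inverts the defining product, and $\cro_k(V_1,W_1,W_1,V_2)=1$ since the two $W_1^{\perp}$‑factors cancel. For (2), writing $\cro_k(W_1,V_1,V_2,W_2)$ out and replacing each of its four factors $W_a\wedge V_b^{\perp}$ by $(-1)^{k}V_b\wedge W_a^{\perp}$, the four signs cancel and one recovers $\cro_k(V_1,W_1,W_2,V_2)$. The cocycle identities (3) and (4) follow by multiplying out the two defining ratios and cancelling, respectively, the two factors involving $V_2$ and the factors involving $W_2$. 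Finally, for (5), a lift $\tilde g\in\GL(\R^{p+q})$ of $g\in\PO(p,q)$ preserves $Q$ up to a nonzero scalar, so $(\tilde gW)^{\perp}=\tilde g(W^{\perp})$ and $\tilde g$ acts on $\wedge^{p+q}\R^{p+q}\simeq\R$ by $\det\tilde g$; hence each of the four factors $gV_i\wedge(gW_j)^{\perp}$ acquires the factor $\det\tilde g$ and these cancel in pairs, giving invariance (independence of the lift being clear for the same reason). The only step that is not completely routine is the adapted‑basis construction in the sign lemma.
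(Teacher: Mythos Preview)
The paper does not give a proof of this proposition; it simply states that the properties are ``classical and easy to verify.'' Your write‑up supplies the missing details, and parts (1), (3), (4), (5), (6) are handled correctly.

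There is, however, a genuine gap in your argument for (2). Your sign lemma $V\wedge W^{\perp}=(-1)^{k}\,W\wedge V^{\perp}$ is proved only for the specific adapted bases $(e_i,f_j,g_l)$ you construct, and those bases depend on \emph{both} $V$ and $W$ (the $f_j$ are dual to the $e_i$, and the $g_l$ span $(V\oplus W)^{\perp}$). When you ``replace each of its four factors $W_a\wedge V_b^{\perp}$ by $(-1)^{k}V_b\wedge W_a^{\perp}$'' in $\cro_k(W_1,V_1,V_2,W_2)$, the adapted basis for the pair $(V_1,W_1)$ gives one basis of $W_1$ and $W_1^{\perp}$, while the adapted basis for $(V_2,W_1)$ gives a different one; so the four substituted factors no longer use consistent bases, and the resulting expression is not a priori equal to $\cro_k(V_1,W_1,W_2,V_2)$. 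The sentence ``this identity may be freely substituted into such products'' does not bridge this: basis‑independence of the cross ratio only lets you change bases \emph{consistently} across all factors, not pairwise.

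A clean repair, in the spirit of your approach, is to observe that for fixed basis matrices $M_V,M_W,M_{W^{\perp}}$ one has
\[
V\wedge W^{\perp}\;=\;c(M_W,M_{W^{\perp}})\cdot\det\!\big(M_W^{\,t}\,Q\,M_V\big),
\]
since both sides are $k$‑linear alternating in the columns of $M_V$ and vanish exactly when $V\ntv W$; the constant depends only on the chosen bases of $W$ and $W^{\perp}$, not on $V$. Hence the $c$'s cancel in the cross ratio and
\[
\cro_k(V_1,W_1,W_2,V_2)=\frac{\det(M_{W_2}^{\,t}QM_{V_1})\det(M_{W_1}^{\,t}QM_{V_2})}{\det(M_{W_1}^{\,t}QM_{V_1})\det(M_{W_2}^{\,t}QM_{V_2})},
\]
which is visibly invariant under $(V_1,V_2)\leftrightarrow(W_1,W_2)$ because $Q$ is symmetric. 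For $k=1$ this is exactly the formula with $Q(\tilde V_i,\tilde W_j)$ recorded in the paper.
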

\begin{proof}
 {(1), (2), (3) are straightforward computations from the expression in Definition~\ref{d.cr}, while (4) follows since $\PO(p,q)$ preserves orthogonal complement and induces the multiplication by a scalar on $\wedge^{p+q} \R^{p+q}$. 
 	
(5)  The expression for the cross ratio is clearly algebraic when defined on the frame manifold, and descends to an algebraic function on $\mathcal A_k$ since it doesn't depend on the choice of a lift.}
\end{proof}	

In the context of Anosov representations, the cross ratio computes the fundamental weights. It is easy to check:
\begin{lem}[{\cite[Page 19]{MZ}}]\label{lem.cro-period}
	If $\r$ is $k$--Anosov, then
	$$\cro_k (\g_-^k,x^k,\g x^k,\g_+^{k})=\lambda_1^2(\g)\ldots\lambda_k^2(\g)$$
	for all non-trivial $\g\in \G$ and $x\in \dg\backslash\{\g_{\pm}\}$. 
\end{lem}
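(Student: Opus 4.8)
The plan is to compute the cross ratio directly in a basis adapted to the Jordan decomposition of $\g$. First I would reduce to the case where $\g$ is $\R$-regular and diagonalizable over $\R$ up to the unavoidable sign/complex-eigenvalue issues: since $\r$ is $k$--Anosov, the ratio $\lambda_k/\lambda_{k+1}(\r(\g))$ has absolute value strictly bigger than $1$, so $\xi^k(\g_+)$ is the attracting fixed point of $\r(\g)$ acting on $\Is_k(\R^{p,q})$, and $\xi^k(\g_-)$ is the attracting fixed point of $\r(\g^{-1})$, i.e. the repelling $k$-plane of $\r(\g)$. Concretely, choosing a lift $\wt\g\in\OO(p,q)$ and a generalized eigenbasis $e_1,\ldots,e_{p+q}$ ordered by non-increasing modulus of eigenvalue, the attracting $k$-plane is $\g^k_+=\spa(e_1,\ldots,e_k)$ and the repelling one is $\g^k_-=\spa(e_{p+q-k+1},\ldots,e_{p+q})$, whose perpendicular is $\spa(e_1,\ldots,e_{p+q-k})$ (using $\lambda_i=\lambda_{p+q+1-i}^{-1}$ and that the form pairs $e_i$ with $e_{p+q+1-i}$ in suitable normalization).

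Next I would plug the four flags $\g^k_-, x^k, \g x^k, \g^k_+$ into the definition
\[
\cro_k(\g^k_-,x^k,\g x^k,\g^k_+)=\frac{\g^k_-\wedge (\g x^k)^\perp}{\g^k_-\wedge (x^k)^\perp}\cdot\frac{\g^k_+\wedge (x^k)^\perp}{\g^k_+\wedge (\g x^k)^\perp},
\]
and exploit equivariance and invariance of the wedge pairing. The key observation is that $\g^k_\pm$ are $\wt\g$-invariant, so $\g^k_\pm\wedge (\g x^k)^\perp=\g^k_\pm\wedge \g\cdot(x^k)^\perp=\det(\wt\g)^{-1}\,(\wt\g^{-1}\g^k_\pm)\wedge(x^k)^\perp$; since $\det(\wt\g)=\pm1$ and $\wt\g$ scales $\g^k_+$ by $\lambda_1\cdots\lambda_k$ and $\g^k_-$ by $\lambda_{p+q-k+1}\cdots\lambda_{p+q}=(\lambda_1\cdots\lambda_k)^{-1}$ on the top exterior power, the four transversality terms $x^k\wedge(x^k)^\perp$-type denominators all cancel pairwise and one is left exactly with $(\lambda_1\cdots\lambda_k)\cdot(\lambda_1\cdots\lambda_k)=\lambda_1^2\cdots\lambda_k^2$. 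I would write this out carefully, keeping track that the two quotients contribute the same power rather than cancelling, which is where the specific ordering of the arguments of $\cro_k$ matters. The independence of the answer from $x$ is automatic from this computation, and independence from the lift $\wt\g$ follows since the expression $\lambda_i/\lambda_{i+1}$, hence $\lambda_1^2\cdots\lambda_k^2=\prod_{j\le k}\prod_{i\ge j}(\lambda_i/\lambda_{i+1})^{?}$—more cleanly, $\lambda_1^2\cdots\lambda_k^2(\g)$ is well defined in $\PO(p,q)$ because replacing $\wt\g$ by $-\wt\g$ multiplies each $\lambda_i$ by $-1$ and $2k$ of them give an even power.

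The main obstacle I anticipate is purely bookkeeping: handling the case where $\g$ is not semisimple or has eigenvalues that are complex or negative, so that there is no honest real eigenbasis. The clean way around this is to note that both sides of the identity are algebraic functions on the set of $k$--Anosov representations (the cross ratio is algebraic by Proposition \ref{prop.property of grassmannian cro}(6), and $\lambda_1^2\cdots\lambda_k^2$ is algebraic in the entries of a lift), they agree on the Zariski-dense, open subset of loxodromic/$\R$-regular elements where the eigenbasis computation applies, and hence agree everywhere by continuity/density; alternatively one works with the generalized eigenspace for the top $k$ eigenvalues directly, using that $\xi^k(\g_+)$ is still this generalized eigenspace by dynamics-preservation. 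Either route avoids grinding through Jordan blocks. I would present the eigenbasis computation as the heart of the argument and dispatch the general case in a sentence.
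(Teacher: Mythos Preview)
The paper states this lemma without proof, treating it as a standard fact; there is no proof in the paper to compare against.

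Your approach is correct in substance. Two comments on the exposition.

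First, the sentence about ``four transversality terms $x^k\wedge(x^k)^\perp$-type denominators'' is garbled---no such terms appear. What actually happens is cleaner: since $(\g x^k)^\perp=\wt\g\,(x^k)^\perp$ and $\g_\pm^k$ are $\wt\g$-invariant, one has
\[
\g_\pm^k\wedge(\g x^k)^\perp \;=\; \det(\wt\g)\cdot\det\bigl(\wt\g^{-1}\big|_{\g_\pm^k}\bigr)\cdot\bigl(\g_\pm^k\wedge(x^k)^\perp\bigr),
\]
so each of the two fractions in the definition of $\cro_k$ collapses to a scalar: the $\g_-^k$ fraction gives $\det(\wt\g)\,\l_1\cdots\l_k$ and the $\g_+^k$ fraction gives $\det(\wt\g)^{-1}\,\l_1\cdots\l_k$. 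Their product is $(\l_1\cdots\l_k)^2$ as claimed. Write it this way and the bookkeeping disappears.

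Second, the obstacle you anticipate is not one. The computation above uses only that $\g_\pm^k$ are $\wt\g$-invariant real subspaces and that the determinant of $\wt\g$ on an invariant subspace equals the product of the eigenvalues lying in it. Both facts hold regardless of Jordan structure, and the gap $|\l_k|>|\l_{k+1}|$ forces complex eigenvalues among $\l_1,\ldots,\l_k$ to occur in conjugate pairs, so the product is automatically real. Hence the direct computation already covers every $k$-proximal element; no density or continuity argument is needed. The density route you sketch is in any case awkward to make precise, since for a fixed representation $\g$ ranges over the countable set $\G$, not over a variety.
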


A computation as in the proof of  \cite[Proposition 3.11]{Beyrer-Pozzetti} yields the following.

\begin{prop}\label{prop.projection of cross ratio}
	Let $(V_1,W_1,W_2,V_2)\in\calA_k$ with $\dim V_1\cap V_2=k-1$. Set 
	$$V:=(V_1\cap V_2),\quad V_+:=V_1+V_2,\quad \widehat{V}_+:=\quotient{V_+}{V}, \quad \widehat{V}_{\perp}:=\quotient{V^{\perp}}{V}.$$
	Denote by $[V_1],[V_2],[W_1\cap V^{\perp}],[W_2\cap V^{\perp}]\in \Is_{1}(\widehat{V}_{\perp})$ and $[V_1],[V_2],[W^{\perp}_1\cap V_+],[W^{\perp}_2\cap V_+]\in \P(\widehat{V}_+)\simeq \R\P^1$ the associated subspaces. Then
	\begin{align*}
		\cro_{k}(V_1,W_1,W_2,V_2)& = \cro_{1}^{\widehat{V}_{\perp}}([V_1],[W_1\cap V^{\perp}],[W_2\cap V^{\perp}],[V_2])\\
		& = \cro_{1}^{\widehat{V}_+}([V_1],[W_1^{\perp}\cap V_+],[W_2^{\perp}\cap V_+],[V_2]),
	\end{align*}
	where we define $\cro^{\widehat{V}_+}_1$ on $\P(\widehat{V}_+)$ to be the usual projective cross ratio on $\R\P^1$.
\end{prop}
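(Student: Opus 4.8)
The plan is to strip off the common $(k-1)$--plane $V=V_1\cap V_2$ from every factor of $\cro_k$ and to reduce the whole identity, via multilinearity of the exterior product, to the one--dimensional computation that gives the formula for $\cro_1$ on isotropic lines.

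\emph{Setup and well-definedness.} First I would record the linear algebra behind the statement. As $V\subseteq V_1$ and $V_1$ is isotropic, $V_1\subseteq V^\perp$; likewise $V_2\subseteq V^\perp$ and hence $V_+\subseteq V^\perp$, and $Q$ descends to a \emph{nondegenerate} form $\widehat Q$ on $\widehat V_\perp=V^\perp/V$ (its radical is $V^\perp\cap V=V$), so $[V_1],[V_2]$ are isotropic lines of $\widehat V_\perp$. From $V_i\tv W_j$ one gets $V_i\cap W_j^\perp=0$ and $V_i^\perp\cap W_j=0$, and, since $W_j$ is isotropic (so $W_j\subseteq W_j^\perp$), also $V_i\cap W_j=0$; taking orthogonals, $W_j+V^\perp=\R^{p+q}$ and $W_j^\perp+V_+=\R^{p+q}$, so $W_j\cap V^\perp$ and $W_j^\perp\cap V_+$ are lines, disjoint from $V$ because $W_j\cap V\subseteq W_j\cap V_1=0$. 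Thus all the projected subspaces in the statement are genuine points of $\Is_1(\widehat V_\perp)$, resp. $\P(\widehat V_+)$. Finally, fixing $f_i\in V_i\setminus V$ and a spanning vector $w_j$ of $W_j\cap V^\perp$, one has $Q(f_i,w_j)\neq 0$: otherwise $w_j$ would annihilate $V_i=V+\langle f_i\rangle$, giving $w_j\in V_i^\perp\cap W_j=0$. This is exactly the transversality making both right-hand sides defined.

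\emph{The key reduction.} Fix a basis $e_1,\dots,e_{k-1}$ of $V$ and $f_i\in V_i$ with $V_i=V\oplus\langle f_i\rangle$, so $\{e_1,\dots,e_{k-1},f_1,f_2\}$ is a basis of $V_+$. I claim that for any isotropic $k$--plane $W$ transverse to $V_1,V_2$, writing $W\cap V^\perp=\langle w\rangle$, one has $\dfrac{V_1\wedge W^\perp}{V_2\wedge W^\perp}=\dfrac{Q(f_1,w)}{Q(f_2,w)}$. Indeed $H:=V+W^\perp$ is a hyperplane (its orthogonal is $V^\perp\cap W=\langle w\rangle$, equivalently $H=w^\perp$) and $f_1,f_2\notin H$; choosing $n\notin H$ and writing $f_i=h_i+t_in$ with $h_i\in H$, $t_i\neq0$, multilinearity kills the $h_i$--term when $e_1\wedge\dots\wedge e_{k-1}\wedge f_i$ is wedged against a basis of $W^\perp\subseteq H$, so $V_i\wedge W^\perp=t_i\,\Omega_W$ for a single nonzero $\Omega_W\in\wedge^{p+q}\R^{p+q}$; moreover $Q(f_i,w)=Q(h_i+t_in,w)=t_iQ(n,w)$ with $Q(n,w)\neq0$, whence $t_1/t_2=Q(f_1,w)/Q(f_2,w)$. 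Plugging $W=W_1,W_2$ into $\cro_k=\dfrac{V_1\wedge W_2^\perp}{V_2\wedge W_2^\perp}\cdot\dfrac{V_2\wedge W_1^\perp}{V_1\wedge W_1^\perp}$ gives $\cro_k(V_1,W_1,W_2,V_2)=\dfrac{Q(f_1,w_2)\,Q(f_2,w_1)}{Q(f_1,w_1)\,Q(f_2,w_2)}$, and since $\widehat Q(f_i+V,w_j+V)=Q(f_i,w_j)$, $[V_i]=\langle f_i+V\rangle$ and $[W_j\cap V^\perp]=\langle w_j+V\rangle$, the right-hand side is by definition $\cro_1^{\widehat V_\perp}([V_1],[W_1\cap V^\perp],[W_2\cap V^\perp],[V_2])$. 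This is the first equality.

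\emph{The second equality.} Let $u_j$ span $W_j^\perp\cap V_+$ and write $u_j=v'_j+b_jf_1+c_jf_2$ with $v'_j\in V$. Pairing with $w_j$ and using $u_j\in W_j^\perp$, $w_j\in W_j$, $v'_j\in V\subseteq w_j^\perp$ gives $b_jQ(f_1,w_j)+c_jQ(f_2,w_j)=0$; hence $b_j,c_j\neq0$ (from $u_j\notin V$ and $Q(f_i,w_j)\neq0$) and $b_j/c_j=-Q(f_2,w_j)/Q(f_1,w_j)$. In the coordinates $(f_1+V,f_2+V)$ on $\widehat V_+$ we have $[V_1]=[1:0]$, $[V_2]=[0:1]$ and $[W_j^\perp\cap V_+]=[b_j:c_j]$, so the projective cross ratio of these four points equals $\dfrac{b_1c_2}{b_2c_1}=\dfrac{Q(f_1,w_2)Q(f_2,w_1)}{Q(f_1,w_1)Q(f_2,w_2)}$, which by the first equality is $\cro_k(V_1,W_1,W_2,V_2)$. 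The only real work is the bookkeeping in the first two paragraphs: checking that each intersection in the statement has the expected dimension and meets $V$ trivially, and tracking the single scalar $\Omega_W$ (equivalently $Q(n,w)$) through the exterior-algebra manipulation so that it cancels in the ratio. There is no conceptual obstacle; the computation runs along the lines of the proof of \cite[Proposition 3.11]{Beyrer-Pozzetti}.
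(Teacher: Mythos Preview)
Your argument is correct. The paper does not actually prove this proposition; it only states that ``a computation as in the proof of \cite[Proposition~3.11]{Beyrer-Pozzetti} yields'' the result, and your write-up supplies precisely that computation in full detail, along the same lines as the cited reference.
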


Martone--Zhang introduced a class of Anosov representations that satisfy a positivity condition with respect to the cross ratio. Recall that a quadruple  $(x,y,z,w)$ in $\dg$ is \emph{cyclically ordered} if it is positively or negatively oriented for the standard cyclic order on $\mathbb S^1=\dg$.

\begin{defn}[{\cite[Def. 2.25]{MZ}}]\label{d.posrat}
	A representation $\r:\G\to \PO(p,q)$ is called \emph{$k$--positively ratioed} if it is $k$--Anosov and for all cyclically ordered quadruples $(x,y,z,w)\in\dg^{[4]}$ 
	\begin{align}\label{eq.def pos ratio}
		\cro_k(x^k,y^{k},z^{k},w^k)\geq 1.
	\end{align}
\end{defn}
It is shown in \cite{MZ} that the inequalities in \eqref{eq.def pos ratio} are necessarily  strict.

\subsection{Property $H_k$}
The following transversality property of boundary maps was introduced by Labourie in his work on Hitchin representations, and was further studied in \cite{PSW1,ZZ} where relations with differentiability of boundary maps were established.
\begin{defn}[{\cite[Section 7.1.4]{Labourie-IM}}]
	A representation $\rho:\Gamma\to\PO(p,q)$ satisfies \emph{property $H_k$} for $2\leq k\leq p-1$ if it is $\{k-1,k,k+1\}$-Anosov and for all $(x,y,z)\in\dg^{(3)}$ the following sum is direct and thus equal to $\R^{p,q}$ 
	\begin{align}\label{eq property H}
		x^k+ \left(y^k\cap z^{p+q-k+1}\right)+ z^{p+q-k-1}.
	\end{align} 
A representation satisfies property $H_1$ if it is $\{1,2\}$-Anosov and for all $(x,y,z)\in\dg^{(3)}$ the following sum is direct and thus equal to $\R^{p,q}$ 
$$x^1+y^1+z^{p+q-2}.$$
\end{defn}

%If $k=1$, we consider $\{1,2\}$-Anosov representations such that the sum
%is direct.
The transversality of the Anosov boundary maps implies that the sum in \eqref{eq property H} is direct if and only if the sum $\left(x^k\cap z^{p+q-k+1}\right)+ \left(y^k\cap z^{p+q-k+1}\right)+ z^{p+q-k-1}$ is direct. Moreover, taking the orthogonal complement, one easily checks that the sum is direct if and only if the following sum is direct
\begin{align}\label{e.Hq+p-k}
 \left(x^{p+q-k}\cap z^{k+1}\right)+ \left(y^{p+q-k}\cap z^{k+1}\right)+ z^{k-1}
\end{align}

It was shown in \cite{PSW1} that representations satisfying property $H_k$ have boundary maps with $C^1$ image, with tangent spaces prescribed by the Anosov boundary maps. 
The tangent space to $\Gr_k(\R^{p+q})$ at a subspace $V$ can be naturally identified with $\Hom(V,W)$ where $W$ is a vector subspace of dimension $(p+q-k)$ transverse to $V$. Since $\Is_k(\R^{p,q})$ is a submanifold of  $\Gr_k(\R^{p+q})$, we  identify its tangent space $T_{x^k}\Is_k(\R^{p,q})$ with a subspace of $\Hom(x^k,y^{p+q-k})$. 

\begin{prop}[{\cite[Proposition 8.11]{PSW1}}]\label{prop.1-hyperconvex for property Hk} If $\rho:\Gamma\to\PO(p,q)$ satisfies property $H_k$, then the boundary curve $\xi^k$ has $C^1$ image and the tangent space is given by
	\begin{align*}
		T_{x^k}\xi^k(\dg)=\{ \phi\in \Hom(x^k,y^{p+q-k})| x^{k-1}\subseteq \ker \phi, \;{\rm Im}\: \phi\subseteq  x^{k+1}\cap y^{p+q-k}  \}
	\end{align*}
	for any $y\neq x \in\dg$.
\end{prop}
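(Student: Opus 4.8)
This is \cite[Proposition 8.11]{PSW1}; here is the shape of the argument. Write $d=p+q$; since $k$--Anosov into $\PO(p,q)$ is the same as $k$--Anosov into $\PGL(\R^d)$, I work with the partial flag curve $(\xi^{k-1},\xi^k,\xi^{k+1})$ and the dual curves $\xi^{d-j}:=(\xi^j)^{\perp}$. For $x\neq y$, transversality of $\xi^k$ gives $\R^d=x^k\oplus y^{d-k}$ and hence the standard identification $T_{x^k}\Is_k(\R^{p,q})\simeq \Hom(x^k,\R^d/x^k)\simeq\Hom(x^k,y^{d-k})$; transversality of $\xi^{k-1}$ makes $x^{k-1}$ a hyperplane of $x^k$, and transversality of $\xi^{k+1}$ together with $x^k\cap y^{d-k}=0$ makes $x^{k+1}\cap y^{d-k}$ a line mapping isomorphically onto $x^{k+1}/x^k$. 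So the subspace in the statement is the line $C_x:=\{\phi\in\Hom(x^k,\R^d/x^k)\mid x^{k-1}\subseteq\ker\phi,\ \im\phi\subseteq x^{k+1}/x^k\}=(x^k/x^{k-1})^{*}\otimes (x^{k+1}/x^k)$, which is independent of the auxiliary $y$ and depends algebraically, hence continuously, on $(\xi^{k-1}(x),\xi^{k+1}(x))$. It therefore suffices to show that $\xi^k(\dg)$ is a $C^1$ submanifold with $T_{x^k}\xi^k(\dg)=C_x$.

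The mechanism is infinitesimal contraction at attracting fixed points. If $\g\in\G$ has infinite order and $\g_+=x$, then $x^k=\xi^k(\g_+)$ is the attracting fixed point of $\rho(\g)$ on $\Is_k$, and the induced action on $T_{x^k}\Is_k=(x^k)^{*}\otimes(\R^d/x^k)$ has eigenvalues $\lambda_j(\rho(\g))/\lambda_i(\rho(\g))$ with $i\leq k<j$. The maximal modulus is attained only at $(i,j)=(k,k+1)$ — this uses precisely the three gaps $|\lambda_{k-1}|>|\lambda_k|>|\lambda_{k+1}|>|\lambda_{k+2}|$, i.e.\ that $\rho$ is $\{k-1,k,k+1\}$--Anosov — so the differential at $x^k$ is a contraction with a one--dimensional strongly attracting line, which is exactly $(x^k/x^{k-1})^{*}\otimes (x^{k+1}/x^k)=C_x$. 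The extra content of property $H_k$ over $\{k-1,k,k+1\}$--Anosov is non--degeneracy of the osculation: the directness of $x^k+(y^k\cap z^{d-k+1})+z^{d-k-1}$ for all genuine triples forbids the $\rho(\g)$--invariant compact curve $\xi^k(\dg)$ from being tangent at $x^k$ to the weak--stable subspace, so it must accumulate at $x^k$ only along directions converging to the strongly attracting line $C_x$; equivalently $\xi^k$ is differentiable at $\g_+$ with tangent $C_{\g_+}$, and both constraints ``$x^{k-1}\subseteq\ker$'' and ``$\im\subseteq x^{k+1}/x^k$'' are then automatic because they are built into $C_x$.

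To globalize: attracting fixed points are dense in $\dg$, and all the estimates above (the domination constant of $C_x$ inside $T\Is_k$, and the transversality defects appearing in property $H_k$) are uniform on the compact set $\dg$, property $H_k$ being an open condition. A standard shadowing/recurrence argument then upgrades differentiability at the dense set of fixed points, with a uniform modulus, to differentiability at every $x$ with tangent line $C_x$; the uniform modulus gives continuity of $x\mapsto C_x$, which we already knew, and altogether that $\xi^k(\dg)$ is $C^1$. Alternatively, if one grants that the limit curve of an Anosov representation is Lipschitz, hence absolutely continuous, then property $H_k$ forces its a.e.\ derivative to lie in the continuous line field $C$ almost everywhere, and an absolutely continuous curve whose derivative is a.e.\ equal to a continuous map is $C^1$.

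The main obstacle is the second paragraph: extracting from the open, projective--position hypothesis ``property $H_k$'' — together with the three--term Anosov domination — that the invariant curve is tangent to the strongly attracting line rather than to the weak--stable subspace, and then carrying this out uniformly over $\dg$ so as to pass from fixed points to all points. This is the usual domination--plus--shadowing bookkeeping for Anosov representations on a compact boundary, and it is the only genuine work in the proof.
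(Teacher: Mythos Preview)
The paper does not prove this proposition; it is quoted from \cite[Proposition~8.11]{PSW1} and used as a black box, so there is no in-paper argument to compare your sketch against. Your outline captures the right mechanism. Two comments.

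First, the alternative route at the end is not free: Anosovness by itself yields only H\"older regularity of the boundary map, not Lipschitz regularity of its image, so ``granting Lipschitz'' already presupposes the property-$H_k$ work you are trying to sidestep.

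Second, the argument in \cite{PSW1} is organized more concretely than your domination-plus-shadowing framing. The reformulation of property $H_k$ recorded in the paper just after its definition --- that $(x^k\cap z^{d-k+1})+(y^k\cap z^{d-k+1})+z^{d-k-1}$ is direct for all triples --- says precisely that for each fixed $z$ the continuous map
\[
\dg\setminus\{z\}\;\longrightarrow\;\mathbb{P}\bigl(z^{d-k+1}/z^{d-k-1}\bigr)\cong\mathbb{RP}^1,\qquad y\;\longmapsto\;[y^k\cap z^{d-k+1}],
\]
is injective, hence a homeomorphism onto an interval. Differentiability of $\xi^k$ and the identification of the tangent line are then read off from this projective coordinate, rather than obtained by a recurrence argument at fixed points followed by globalization. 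Your dynamical framing could also be made to work, but the step you correctly flag as ``the only genuine work'' --- that property $H_k$ forbids secants from accumulating in the weak-stable subspace --- is exactly what this $\mathbb{RP}^1$ map makes explicit, and in your write-up it remains a slogan rather than an argument.
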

\begin{proof}
	To deduce this statement from \cite[Proposition 8.11]{PSW1}, we set in their notation $p=k$ and $s=k+1$, and observe that  Condition (i) in \cite[Proposition 8.11]{PSW1} is guaranteed since $\rho$ is $(p-1)$-Anosov. \cite[Proposition 8.11]{PSW1} then gives that $\wedge^k \rho$ is $(1,1,2)$-hyperconvex and thus has $C^1$ 1-Anosov boundary map by \cite[Proposition 7.4]{PSW1} with tangent space given by the 2-Anosov boundary map of $\wedge^k \rho$. Since the $1$-Anosov boundary map of $\wedge^k \rho$ is given by the composition of the $k$-Anosov boundary map of $\rho$ with $\wedge^k:\Is_k(\R^{p,q})\to \P(\wedge^k \R^{p,q})$, and the latter is algebraic, we deduce that the $k$-Anosov boundary map is itself $C^1$. The statement about the tangent space, follows from the proof of \cite[Proposition 8.9]{PSW1} { where it is observed that the image of the vector subspace $$\{ \phi\in \Hom(x^k,y^{p+q-k})| x^{k-1}\subseteq \ker \phi, \;{\rm Im}\: \phi\subseteq  x^{k+1}\cap y^{p+q-k}  \}<T_{x^k}\Is_{k}(\R^{p,q})$$  under $d\wedge^k$ is the tangent space at $\xi^2_{\wedge^k\rho}(x)$ of the submanifold $\xi^2_{\wedge^k\rho}$. }
	\end{proof}

%%%%%%%%%%%%
 \section{$\T$-Positive structure for $\PO(p,q)$}\label{sec.prelim positive reps}
 
In this section we recall the definition and general facts about the $\T$-positive structure, $\T$-positive triples and $\T$-positive representations associated to $\PO(p,q)$, restricting, as always, to the case $1<p< q$. We additionally prove several basic facts that will be relevant for us in the the rest of the paper.

We fix a basis $(e_1,\ldots,e_{p+q})$ such that the form $Q$ is 
represented by the matrix
\begin{align}\label{eq.the from Q}
	Q=\bpm 0&0&K\\0&J&0\\K^t&0&0\epm,
\end{align} 
where 
$$K=\begin{pmatrix}0&0&0&(-1)^{p-1}\\ 0&0&\iddots &0\\ 0&1&0 &0\\ -1&0&0&0\end{pmatrix}\text{ and } J=\begin{pmatrix}0&0&1\\ 0&-\Id_{q-p}&0\\1&0&0\end{pmatrix}.$$

An important role for the $\T$-positive structure will be played by the vector subspace $V_J:=\spa \{e_{p},\ldots,e_{q+1}\}$ together with the  bilinear form\footnote{By construction this represents the restriction of $Q$ on $V_J$.} $b_J$ of signature $(1,q-p+1)$ induced by $J$,  and the induced quadratic form $q_J$. Namely 
$$b_J(\sfw,\sfz):=\frac{1}{2} \sfw^t J \sfz,\quad q_J(\sfw):=b_J(\sfw,\sfw),\quad \sfw,\sfz\in V_J.$$
We will  denote by
\begin{equation}\label{e.cJ}
	c_J(V_J)=\{\sfw=(\sfw_{p},\ldots,\sfw_{q+1})\in V_J| \sfw_{p}>0, \;q_J(\sfw)>0\}
\end{equation}
the cone of the $Q$--positive vectors with positive first entry and by $ \ov c_J(V_J)$ its partial closure, where the second inequality isn't required to be strict. For every $\sfw$ in $c_J(V_J)$ we  have $\sfw_{q+1}\geq0$: {indeed denoting by $\ov\sfw$ the vector $(\sfw_{p+1},\ldots, \sfw_q)$ we have $$q_J(\sfw)=\sfw_p\sfw_{q+1}-\|\ov\sfw\|^2>0,$$  from which the claim follows.
}

The following basic observation will be important later on.

\begin{lem}\label{lem.pre.positiv bJ}
	Let $\sfv\in c_J(V_J)$, and $\sfw\in \ov c_J(V_J)$. Then $b_J(\sfv,\sfw)>0$.
\end{lem}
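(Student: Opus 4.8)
The plan is to express $b_J$ and $q_J$ in light-cone coordinates and then reduce the statement to the reverse Cauchy--Schwarz inequality for a form of Lorentzian signature. First I would unwind the definitions: writing a vector $\sfv\in V_J$ as $\sfv=\sfv_p e_p+\sfv'+\sfv_{q+1}e_{q+1}$ with $\sfv'\in\spa\{e_{p+1},\ldots,e_q\}$, the shape of the matrix $J$ in \eqref{eq.the from Q} gives
\[
	b_J(\sfv,\sfw)=\tfrac12\bigl(\sfv_p\sfw_{q+1}+\sfv_{q+1}\sfw_p-\langle\sfv',\sfw'\rangle\bigr),
	\qquad
	q_J(\sfv)=\sfv_p\sfv_{q+1}-\tfrac12\|\sfv'\|^2,
\]
where $\langle\cdot,\cdot\rangle$ and $\|\cdot\|$ denote the standard Euclidean inner product and norm on $\spa\{e_{p+1},\ldots,e_q\}$. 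Under this identification the hypothesis $\sfv\in c_J(V_J)$ reads $\sfv_p>0$ and $\sfv_p\sfv_{q+1}>\tfrac12\|\sfv'\|^2$ (so that in particular $\sfv_{q+1}>0$), while $\sfw\in\ov c_J(V_J)$ reads $\sfw_p>0$ and $\sfw_p\sfw_{q+1}\ge\tfrac12\|\sfw'\|^2\ge0$ (so that $\sfw_{q+1}\ge0$).

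With this setup the estimate is the standard one. I would apply the arithmetic-geometric mean inequality to the non-negative numbers $\sfv_p\sfw_{q+1}$ and $\sfv_{q+1}\sfw_p$, then multiply the strict inequality $\sfv_p\sfv_{q+1}>\tfrac12\|\sfv'\|^2$ against $\sfw_p\sfw_{q+1}\ge\tfrac12\|\sfw'\|^2$, and finally invoke Euclidean Cauchy--Schwarz, to obtain
\[
	\sfv_p\sfw_{q+1}+\sfv_{q+1}\sfw_p\ \ge\ 2\sqrt{(\sfv_p\sfv_{q+1})(\sfw_p\sfw_{q+1})}\ >\ \|\sfv'\|\,\|\sfw'\|\ \ge\ \langle\sfv',\sfw'\rangle ,
\]
and hence $b_J(\sfv,\sfw)>0$. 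The strict middle inequality uses that $\sfw_p\sfw_{q+1}>0$; if this fails, i.e. $\sfw'=0$ and $\sfw\in\R_{>0}\,e_p$, one argues directly: there $b_J(\sfv,\sfw)=\tfrac12(\sfv_p\sfw_{q+1}+\sfv_{q+1}\sfw_p)\ge\tfrac12\,\sfv_{q+1}\sfw_p>0$ since $\sfv_{q+1}>0$, $\sfw_p>0$ and $\sfv_p\sfw_{q+1}\ge0$.

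I do not expect a genuine obstacle here: the only care needed is the bookkeeping of strict versus non-strict inequalities and the isolation of the degenerate direction $\R_{>0}\,e_p$ just mentioned, where the square-root bound collapses to an equality and one must instead exploit that the cross term $\sfv_{q+1}\sfw_p$ is already strictly positive. Conceptually, the lemma is nothing but the classical statement that a symmetric bilinear form of signature $(1,n)$ is strictly positive on every pair consisting of a vector in the open and a nonzero vector in the closed half of one nappe of its cone $\{q\ge0\}$, applied to the component $c_J(V_J)$ of $\{q_J>0\}$ cut out by the sign of the first coordinate; if one prefers, one may cite this and skip the computation above.
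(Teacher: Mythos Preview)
Your proof is correct. The coordinate computation is clean, and the case split (handling $\sfw_p\sfw_{q+1}>0$ via AM--GM and Cauchy--Schwarz, and the degenerate direction $\R_{>0}e_p$ directly) is exactly what is needed to keep the inequality strict.

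The paper argues differently: it restricts $b_J$ to the two-dimensional subspace $\langle\sfv,\sfw\rangle$, observes that this restriction has signature $(1,1)$ because $q_J(\sfv)>0$ and $\sfv^\perp$ is negative definite, picks a basis in which $b_J$ is $\left(\begin{smallmatrix}0&1\\1&0\end{smallmatrix}\right)$, and then uses the sign of a coordinate to see that $\sfv$ and $\sfw$ lie in the same closed quadrant, whence $b_J(\sfv,\sfw)>0$. In other words, the paper runs the ``same nappe of the light cone'' argument you mention in your final paragraph rather than the explicit inequality chain. Your route is more elementary and entirely self-contained (no basis change, no appeal to the signature classification), at the cost of a small case analysis; the paper's route is coordinate-free and makes the Lorentzian geometry visible, at the cost of checking that both vectors land in the same quadrant. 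Either is fine here.
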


\begin{proof}
	If $\sfv\in c_J(V_J)$, and $\sfw\in \ov c_J(V_J)$, then the quadratic form $b_J|_{\langle \sfv,\sfw\rangle}$ has signature (1,1). Indeed  since $b_J$ has signature $(1,q-p+1)$ and $q_J(\sfv)>0$, the restriction of $b_J$ to $\sfv^\perp$ is negative definite.  
	
	In particular we can choose a basis of $\langle \sfv,\sfw\rangle$ with respect to which $b_J$ is represented by the matrix $\left(\begin{smallmatrix}0&1\\1&0\end{smallmatrix}\right)$. Since $\sfv,\sfw$ are (semi)-positive vectors for $b_J$, they belong either to the closure of the positive or of the negative quadrant; the {(semi)-}positivity of the last coordinates of a vector in $c_J(V_J)$ guarantees that  they furthermore belong to the same quadrant, and thus their inner product is positive.
\end{proof}

\subsection{$\T$-Positive elements}\label{ss.pos}
Guichard--Wienhard defined the notion of $\Theta$-positive structure. We introduce here the $\Theta$-positive structure of $\PO(p,q)$ following \cite[Section 4.5]{GWpositivity}.

We denote by $\Theta$ the first $p-1$ simple roots of $\PO(p,q)$ in the standard way of drawing the Dynkin diagram\footnote{Namely $\Theta$ consists of all roots but the simple root that is only connected to one other root by a double arrow; equivalently of all the long roots.}, and by $\calF_{p-1}=\calF_{p-1}(\R^{p,q})$ the associated flag manifold, which we realize as the set of flags of subspaces of $\R^{p,q}$ of dimension $\{1,\ldots, p-1,q+1,\ldots,q+p-1\}$ such that the first $p-1$ subspaces are isotropic for $Q$ and the other subspaces are their orthogonals with respect to $Q$. Clearly an element in $\calF_{p-1}$ is uniquely determined by the first $p-1$ subspaces.
Throughout the whole paper we will denote by $\sZ$ and $\sX$ the partial flags in $\calF_{p-1}$ defined by%\marginpar{\J changed style of X and Z..}
\begin{align}\label{e.XZ}
	\sZ^l=\langle e_1,\ldots, e_l\rangle\quad \text{ and }\quad \sX^l=\langle e_{p+q},\ldots, e_{p+q-l+1}\rangle.
\end{align} 
Here, as above, $l$ ranges in the set $\{1,\ldots, p-1, q+1,\ldots, q+p\}$. Observe that $\sX^l\cap \sZ^{p+q-l+1}=\<e_{p+q-l+1}\>$. 

Given a positive real number $s$, and an integer $1\leq k\leq p-2$ the \emph{elementary matrix} $E_{k}(s)$ is the matrix that differs from the identity only in the positions $(k,k+1)$ and $(p+q-k, p+q-k+1)$ where it is equal to $s$. Instead, for $k=p-1$, we choose a vector $ \sfs\in c_J(V_J)$, denote by $\sfs^t$ the transposed vector  and  set 
\begin{equation}\label{e.Ep-1}
	E_{p-1}(\sfs)=\bpm 
	\Id_{p-2}&0&0&0&0\\
	0&1& \sfs^t&q_J(\sfs)&0\\
	0&0&\Id_{q-{p+2}}&J \sfs&0\\
	0&0&0&1&0\\
	0&0&0&0&\Id_{p-2}
	\epm.
\end{equation}

Our next goal is to recall the definition of the \emph{$\T$-positive semigroup} $U_\Theta^{>0}$ of the unipotent subgroup $U_{\Theta}$ of the stabilizer in $\PO(p,q)$ of the partial flag $\sZ$ (cfr. \cite[Theorem 4.5]{GWpositivity}). To this aim we consider the cone
$$V_{\Theta}:=\left.\{ v=(s_1,\ldots,s_{p-2},s_{p-1}^t)^t\in \R^q\right| s_1,\ldots, s_{p-2}\in \R_{>0}, s_{p-1} \in c_J(V_J)\}.$$
Given $v\in V_{\Theta}$ we set 
$$a(v)=\left(\prod_{j\leq p-1,\: j \text{ odd}} E_j(s_j)\right) \quad b(v)=\left(\prod_{j\leq p-1,\: j \text{ even}} E_j(s_j)\right).$$
Since the matrices in the product defining $a$,  respectively  $b$, commute, their order plays no role.
It will be convenient to also use the notation $ab(v):=a(v)b(v)$.
\begin{defn}\label{d.pos}
	Let $ \vec v=(v_1,\ldots  ,v_{p-1})\in V_{\Theta}^{p-1}$. The \emph{$\T$-positive element} $P( \vec v)$ is the product
	$$P(\vec v)=ab(v_1)\ldots ab( v_{p-1})$$
\end{defn}
The \emph{$\T$-positive semigroup} $U_\Theta^{>0}$ is the set of the $\T$-positive elements defined above, which  forms a semigroup  %\cite[Theorem 4.5]{GWpositivity}
\cite[Corollary 8.16 (2)]{GWPaper}.\footnote{In order to match this with the discussion in \cite[Definition 6.5 ]{GWpositivity} %\cite[Theorem 4.5]{GWpositivity} 
	it is useful to know that the Coxeter number of the root system of type $B_{p-1}$, $p\geq 3$ is equal to $2(p-1)$ \cite[pp.150-151]{Bou2}. Furthermore if $W_{B_{p-1}}$ denotes the Weyl group associated to the root system $B_{p-1}$, we choose  the standard generating system $S$ of $W_{B_{p-1}}$ given by $S= \{s_1,\ldots,s_{p-1}\}$, where $s_{p-1}$ corresponds to the reflection along the only short root in a set of simple roots. Then  the longest element $w_0$ of $W_{B_{p-1}}$ can be expressed as $w_0=(a b)^{p-1}$, where  $a$ is the product of all the elements of $S$ with odd index and  $b$ is the product of all elements of $S$ with even index, and the latter is a reduced expression \cite[pp.150-151]{Bou2} (see also \cite[Lemma 4.3]{DS}), such reduced expression is different from the one chosen in \cite[Appendix B]{GWPaper}, but it is proven in \cite[Theorem 8.1 (8)]{GWPaper} that the positive semigroup doesn't depend on the choice of the reduced expression.}

\begin{remark}
	There are $2^{p-1}$ possible different choices for the cone $V_\Theta$, depending on the choice of the sign of each of the first $p-2$ entries and of the sign of the first coordinate of the vector $s_{p-1}$. Each such choice gives a different choice of a $\T$-positive semigroup and any two choices are conjugated in $\PO(p,q)$.
\end{remark}
\subsection{$\Theta$-positivity of triples of partial flags}
We can now define positivity for triples and $n$-tuples of flags associated to $\PO(p,q)$ (cfr. \cite[Definition 4.6]{GWpositivity}).
\begin{defn}
	A triple $(x,y,z)\in \calF_{p-1}^3$ is $\Theta$-\emph{positive} if there exists $g\in\PO(p,q)$ and a $\T$-positive element $P\in U_{\T}^{>0}$ such that 
	$$(gx,gy,gz)=(\sX, P\sX,\sZ).$$
	A $n$-tuple $(x_1,\ldots, x_n)\in \calF_{p-1}^n$ is $\T$-\emph{positive} if there exist $P_2,\ldots, P_{n-1}\in  U_{\T}^{>0}$, %$\vec{v}_2$,$\ldots,\vec{v}_{n-1}\in V_{\Theta}^{p-1}$,
	 and $g\in\PO(p,q)$ such that
	\begin{align}\label{e.posntuple}
		(gx_1,gx_2,\ldots, gx_{n-1},gx_n)=(\sX, P_2\sX,\ldots, P_2\cdots P_{n-1}\sX,\sZ).
	\end{align}
A \emph{standard $\T$-positive $n$-tuple} is an $n$-tuple of the form $$(\sX, P_2\sX,\ldots, P_2\cdots P_{n-1}\sX,\sZ) \quad \text{ for }P_2,\ldots, P_{n-1}\in  U_{\T}^{>0}.$$ 
\end{defn}
\noindent Since $U^{>0}_{\Theta}$ is a semigroup,  $P_1 P_{2}\in U^{>0}_{\Theta}$ for $P_1, P_{2}\in U^{>0}_{\Theta}$.

{ In order to obtain a notion that is invariant by the $\PO(p,q)$-action by conjugation it is necessary to consider as $\T$-positive also negatively oriented triple: already in the case  of $\PO(1,2)=\Isom(\mathbb H^2)$  both positively and negatively oriented triples in the circle belong to the same $\PO(1,2)$-orbit. In general we have:}

\begin{prop}[{\cite[Proposition 10.15 (3)]{GWPaper}}]\label{c.permutation}
	If $(x_1,x_2,x_3)\in\calF_{p-1}^{3}$ is $\T$-positive, then for any permutation $\sigma$ the triple $(x_{\sigma(1)},x_{\sigma(2)},x_{\sigma(3)})$ is positive.
\end{prop}
{In order to choose a coherent orientation for various triples one uses positivity of $n$-tuples for $n\geq 4$} 
\begin{prop}[{\cite[Lemma 10.24]{GWPaper}}]\label{c.npermutation}
	For every $n$, any  $\T$-positive  $n$-tuple $(x_1,\ldots ,x_n)\in\calF_{p-1}^{n}$, and any  permutation $\sigma$ in the $n$-th Dihedral group, the triple $(x_{\sigma(1)},\ldots ,x_{\sigma(n)})$ is positive. % as well as the tuple gained by the permutation $\sigma(i)=n-i+1$.\mJ{Added this permutation, as I use it later} Furthermore any sub-tuple of a $\T$-positive $n$-tuple is $\T$-positive.
\end{prop}
We conclude the subsection with another useful characterization of positive triples proven by Guichard-Wienhard. Given a flag $A\in\calF_{p-1}$ we denote by $\Omega_A$ the set of flags transverse to $A$, namely the set of flags $F\in\calF_{p-1}$ such that for every $1\leq k\leq p-1$, the sum $A^k+(F^k)^\perp$ is direct.

\begin{thm}[{\cite[Theorem 9.2]{GWPaper}}]\label{t.ccomp}
	The set 
	$$\{F\in\calF_{p-1}|\;F =P\sX, P\in U_{\T}^{>0} \}$$
	is a connected component of $\Omega_{\sX}\cap\Omega_{\sZ}$.
	Thus, if $A,B$ are transverse flags, the set 
	$$\{F\in\calF_{p-1}|\;(A,F,B) \text{ is $\T$-positive }\}$$
	is a union of connected components of $\Omega_A\cap\Omega_B$.
\end{thm}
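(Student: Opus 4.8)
The first assertion is exactly \cite[Theorem 4.7]{GWpositivity}: the real content of the theorem, namely that $\{F\in\calF_{p-1}\mid F=P\sX,\ P\in U_\T^{>0}\}$ is a connected component of $\Omega_\sX\cap\Omega_\sZ$, is what requires the braid relation of Proposition \ref{prop.longest element} and the combinatorics of the Weyl group of type $B_{p-1}$. I would therefore take the first assertion as given and only explain how the second is deduced from it, using the transitivity of $\PO(p,q)$ on transverse pairs of flags in $\calF_{p-1}$ (a standard consequence of Witt's extension theorem, and which is implicit already in the very definition of a $\T$-positive triple).

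The plan for the second part is as follows. Fix transverse flags $A,B\in\calF_{p-1}$ and choose, by transitivity, $g\in\PO(p,q)$ with $gA=\sX$ and $gB=\sZ$. Then $g$ maps $\Omega_A\cap\Omega_B$ homeomorphically onto $\Omega_\sX\cap\Omega_\sZ$, hence induces a bijection between their sets of connected components; and since $\T$-positivity of a triple is manifestly $\PO(p,q)$-invariant, $g$ carries $\{F\mid (A,F,B)\text{ is }\T\text{-positive}\}$ onto $\{F'\mid (\sX,F',\sZ)\text{ is }\T\text{-positive}\}$. So it suffices to treat the case $A=\sX$, $B=\sZ$.

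In that case I would simply unwind the definition: $(\sX,F',\sZ)$ is $\T$-positive if and only if there exist $h\in\PO(p,q)$ and a positive element $P$ with $(h\sX,hF',h\sZ)=(\sX,P\sX,\sZ)$, i.e. if and only if $h\in L:=\Stab(\sX)\cap\Stab(\sZ)$ and $F'\in h^{-1}\bigl(U_\T^{>0}\sX\bigr)$. Hence
\[
\{F'\mid (\sX,F',\sZ)\text{ is }\T\text{-positive}\}\;=\;\bigcup_{h\in L} h\cdot\bigl(U_\T^{>0}\sX\bigr).
\]
Every $h\in L$ fixes both $\sX$ and $\sZ$, so it restricts to a self-homeomorphism of $\Omega_\sX\cap\Omega_\sZ$ and therefore permutes its connected components; since $U_\T^{>0}\sX$ is one such connected component by the first assertion, each $h\cdot\bigl(U_\T^{>0}\sX\bigr)$ is again a connected component of $\Omega_\sX\cap\Omega_\sZ$, and the displayed union is a union of connected components, as claimed.

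I do not expect any genuine obstacle beyond the first assertion itself. The only points that need a word of care are: that $U_\T^{>0}\sX\subseteq\Omega_\sX\cap\Omega_\sZ$ (contained in \cite[Theorem 4.7]{GWpositivity}, since positive triples are in particular transverse), and that the relevant stabilizers act transitively where needed; both are standard. Everything else is bookkeeping with group actions and with the definition of $\T$-positive triples.
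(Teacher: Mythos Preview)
Your argument is correct. Note, however, that the paper does not supply its own proof of Theorem~\ref{t.ccomp}: both assertions are simply quoted from \cite[Theorem~4.7]{GWpositivity}, and the word ``Thus'' indicates that the second is an immediate consequence of the first. Your write-up makes that implication explicit --- transport the problem to the model pair $(\sX,\sZ)$ by transitivity on transverse pairs, then observe that the set of $F'$ with $(\sX,F',\sZ)$ positive is the $L$-orbit of the connected component $U_\T^{>0}\sX$, where $L=\Stab(\sX)\cap\Stab(\sZ)$ acts by homeomorphisms of $\Omega_\sX\cap\Omega_\sZ$ and hence permutes components. This is exactly the intended (and standard) deduction; there is nothing to compare against in the paper itself.

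One cosmetic remark: in your displayed union you wrote $h\cdot(U_\T^{>0}\sX)$ while the preceding line gives $F'\in h^{-1}(U_\T^{>0}\sX)$. Since $L$ is a group this is harmless, but you may want to keep the notation consistent.
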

We record the following useful corollary.
\begin{cor}\label{lem.deform positive tuples}
	Let $c:[0,a]\subset \R\to \calF^n_{p-1}(\R^{p,q}),\; c(t)=(x_1(t),\ldots,x_n(t))$ be a continuous path such that $x_{i}(t)\tv x_j(t)$ for $i\neq j$ and all $t\in [0,a]$. If $c(0)$ is a $\T$-positive $n$--tuple, then $c(t)$ is a $\T$-positive $n$--tuple for all $t\in [0,a]$.
\end{cor}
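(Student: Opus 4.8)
The plan is to prove that the set $S:=\{t\in[a,b] \mid c(t)\text{ is a positive }l\text{-tuple}\}$ is nonempty, open and closed in $[a,b]$; since $[a,b]$ is connected and $a\in S$ by hypothesis, this forces $S=[a,b]$. Openness is immediate: by the work of Guichard--Wienhard, positivity of $n$-tuples of flags is an open condition in $\calF_{p-1}^n$, and $c$ is continuous, so if $c(t_0)$ is positive then so is $c(t)$ for all $t$ near $t_0$. The entire content is therefore the closedness of $S$, i.e.\ the statement that the positive $l$-tuples form a closed subset of the space $\mathcal{T}_l$ of pairwise transverse $l$-tuples in $\calF_{p-1}$ — which is exactly the space $c$ runs through, by the transversality assumption on the path. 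It is essential here that $c(t)$ is transverse for \emph{all} $t$, including limit points: this is what prevents a limit of positive configurations from degenerating.

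I would first treat the case $l=3$, which is where Theorem \ref{t.ccomp} enters. Since $\PO(p,q)$ acts transitively on transverse pairs in $\calF_{p-1}$ with orbit map $g\mapsto(g\sX,g\sZ)$ a surjective submersion, near any transverse pair there is a continuous local section $(A,B)\mapsto h_{A,B}\in\PO(p,q)$ with $h_{A,B}A=\sX$ and $h_{A,B}B=\sZ$. Now suppose $t_n\to t_0$ in $[a,b]$ with $c(t_n)=(x_1(t_n),x_2(t_n),x_3(t_n))$ positive; for $n$ large set $h_n:=h_{x_1(t_n),x_3(t_n)}$ and $h_0:=h_{x_1(t_0),x_3(t_0)}$, so $h_n\to h_0$. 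As positivity is $\PO(p,q)$-invariant, $(\sX,\,h_n x_2(t_n),\,\sZ)$ is positive, i.e.\ $h_n x_2(t_n)$ lies in $\mathcal{C}:=\{P\sX \mid P\in U_\Theta^{>0}\}$. By Theorem \ref{t.ccomp}, $\mathcal{C}$ is a connected component of $\Omega_{\sX}\cap\Omega_{\sZ}$, hence clopen there, since $\Omega_{\sX}\cap\Omega_{\sZ}$, being open in a manifold, is locally connected. On the other hand $h_n x_2(t_n)\to h_0 x_2(t_0)$, and this limit still lies in $\Omega_{\sX}\cap\Omega_{\sZ}$ because $h_0$ preserves transversality and $x_2(t_0)$ is transverse to both $x_1(t_0)$ and $x_3(t_0)$. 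Therefore $h_0 x_2(t_0)\in\mathcal{C}$, so $(\sX,h_0 x_2(t_0),\sZ)=(h_0x_1(t_0),h_0x_2(t_0),h_0x_3(t_0))$ is positive, and hence $c(t_0)$ is positive, i.e.\ $t_0\in S$. Together with openness and connectedness of $[a,b]$ this gives $S=[a,b]$ when $l=3$.

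For general $l$ the argument is word for word the same: one moves the first and last entries of the tuple to $\sX$ and $\sZ$ via a local section, and uses the $n$-tuple analogue of Theorem \ref{t.ccomp} — that the positive $n$-tuples form a union of connected components of the pairwise transverse $n$-tuples, also due to Guichard--Wienhard \cite{GWpositivity} — in place of its triple version. I expect the only genuinely delicate ingredient to be exactly this appeal to the connected-component structure of the positive locus: mere openness of positivity does not give closedness, and it is the combination of Theorem \ref{t.ccomp} with the standing transversality of the path that closes the gap. The manipulations with local sections of the $\PO(p,q)$-action, and the passage from the triple to the $n$-tuple version of Theorem \ref{t.ccomp}, are routine and I would not elaborate on them.
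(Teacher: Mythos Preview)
Your argument is correct and is precisely the approach the paper has in mind: the corollary is stated there without proof as an immediate consequence of Theorem~\ref{t.ccomp}, and your open--closed argument via the connected-component description of the positive locus among transverse tuples is exactly how one unpacks that. Your write-up is in fact considerably more detailed than the paper's (which gives none), and the only point you leave implicit---that the $l$-tuple version of Theorem~\ref{t.ccomp} follows from the triple version, using that $U_\Theta$ acts simply transitively on $\Omega_{\sZ}$---is standard and harmless to cite to \cite{GWpositivity}.
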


\begin{proof}
	Let $g\in \PO(p,q)$ with $g c(0)=(\sX,P_2\sX,\ldots, P_2\cdots P_{n-1}\sX,\sZ)$.  Since the identity component $\PO_\circ(p,q)$ acts transitively on transverse pairs, we find a continuous curve $t\mapsto g_t\in \PO(p,q)$ such that $g_tx_1(t)=\sX$, $g_t x_n(t)=\sZ$ and $g_0=g$. Thus for all $t\in [0,a]$, the point $g_tx_2(t)$ is in the same connected component of $\Omega_{\sX}\cap\Omega_{\sZ}$ as $P_2\sX$. Hence we find a continuous map $t \mapsto P_2(t)\in U^{>0}_{\Theta}$ such that $g_tx_2(t)=P_2(t) \sX$. By  the same reasoning we find a continuous map $t \mapsto P_3(t)\in U^{>0}_{\Theta}$ such that $(P_2(t)^{-1}g_t x_2(t), P_2(t)^{-1}g_t x_3(t), g_t x_n(t))=(\sX,P_3(t)\sX, \sZ)$, i.e. $$(g_tx_1(t),g_t x_2(t), g_t x_3(t), g_t x_n(t))=(\sX,P_2(t)\sX,P_2(t)P_3(t)\sX, \sZ).$$
	The claim follows by induction. 
\end{proof}
\subsection{$\T$-Positive triples in $\PO(2,q)$}\label{s.2qpositive}
In the case $p=2$ the flag manifold $\calF_{1}=\Is_1(\R^{2,q})$ is also known as the Einstein universe (see for instance \cite{EinPrimer} for an introduction to this space). In this case a transverse pair $(x,z)\in\Is_1(\R^{2,q})$ determines a positive cone $C_{x,z}^+\subset\Is_1(\R^{2,q})$; this is the unique connected component   of the set of points transverse to both $x$ and $z$ whose intersection with a small neighbourhood of $x$ is in the future of $x$ and  similarly it is locally in the past of $z$. The pair $(x,z)$ also  determines a negative cone $C_{x,z}^-=C_{z,x}^+\subset\Is_1(\R^{2,q})$. Note that the choice of sign depends on a time orientation of the Einstein universe, and is thus only invariant by an index 2 subgroup of $\PO(2,q)$, while there are elements in  $\PO(2,q)$ exchanging $C_{x,z}^+$ and $C_{x,z}^-$. A triple $(x,y,z)$ is $\T$-positive if $y$ belongs either to the positive or to the negative cone. If a $4$-tuple $(x,y,z,w)$ is positive then necessarily not only $y$ and $z$ belong to the same of the two cones determined by $x,w$, say $C_{x,w}(z)$, but also the containment $C_{y,w}(z)\subset C_{x,w}(z)$ holds. 

We will not need the interpretation in terms of the Einstein universe, but several geometric properties of the positive semigroup $U_\T^{>0}$ and of positive $n$-tuples in $\Is_1(\R^{2,q})$ which we collect in this section.
}\begin{example}[Case $p=2$]\label{e.3.9}
	Any element $P\in U_{\T_2}^{>0}$ can be written as
	\begin{align}\label{eq.explicit u(w) for p=2}
		P(\sfs)=E_1(\sfs)=\bpm 1 & \sfs^t & q_J(\sfs)\\ 0 & \Id_{q} & J\sfs\\ 0& 0&1\epm,
	\end{align}
	for some $\sfs\in V_{\T_2}= c_J(V_J)$.
\end{example}
For the sake of readability we denote by $e_i$ also the line generated by the $i$-th basis vector.
Any element $x\in \calF_1(\R^{2,q})=\Is_1(\R^{2,q})$ transverse to $e_1$ has a unique representative of the form $(q_J(\sfs_x),\sfs_x,1)$.
The following elementary lemma will be useful in the proof of Proposition~\ref{p.collar2,n}. 
\begin{lem}\label{l.fourpos}
	The 4-tuple $(e_{q+2}, x, y, e_1)\in\calF^4_1(\R^{2,q})$ is equal to a standard $\T$-positive 4-tuple $(\sX, P(\sfs_1)\sX, P(\sfs_1)P(\sfs_2)\sX, \sZ)$ for some $\sfs_1,\sfs_2\in U_{\Theta_2}^{>0}$ if and only if both $\sfs_x$ and $\sfs_y-\sfs_x$ are in $c_J(V_J)$, i.e. are positive for $q_J$ and have positive first entry.
\end{lem}
\begin{proof}
	Observe that $e_{q+2}=\sX$ and $e_1=\sZ$. Moreover $x=E_1(J\sfs_x)\sX$ and $y=E_1(J\sfs_y)\sX=E_1(J\sfs_x)E_1(J\sfs_y-J\sfs_x)\sX$. Setting $\sfs_1=J\sfs_x$ and $\sfs_2=J(\sfs_y-\sfs_x)$, the claim follows from the fact that $J$ preserves $c_J(V_J)$.
\end{proof}

As a result we obtain the following compatibility of cross ratio and positivity, which will be useful later.
\begin{prop}\label{p.poscr1}
	If a 4-tuple $(a,b,c,d)\in\Is^4_1(\R^{2,q})$ is positive, then 
	$$\cro_{1}(a,b,c,d)>1.$$
\end{prop}
\begin{proof}Since both notions are invariant by the $\PO(2,q)$ action, we can assume that $(a,b,c,d)=(e_{q+2}, x, y, e_1)$ is a standard $\T$-positive 4-tuple. A direct computation gives 
	$$\cro_{1}(e_{q+2}, x, y, e_1)=\frac{q_J(\sfs_y)}{q_J(\sfs_x)}=\frac{q_J(\sfs_x)+q_J(\sfs_y-\sfs_x)+2b_J(\sfs_x,\sfs_y-\sfs_x)}{q_J(\sfs_x)}.$$
	The result follows from the definition of $c_J(V_J)$, which ensures that $q_J(\sfs_y-\sfs_x)$ is positive, and Lemma~\ref{lem.pre.positiv bJ} which ensures that $2b_J(\sfs_x,\sfs_y-\sfs_x)$ is positive.
\end{proof}	

{Given an element $g\in \PO(2,q)$ we denote by  $g^t$ its transpose, namely the element represented by the transposed matrix with respect to the standard basis. By our choice of quadratic form $g^t\in \PO(2,q)$ as well. Observe that for every $P\in\ U_{\Theta}=\Stab( \sZ)$, $P^t\in\Stab( \sX)$.}
\begin{lem}\label{l.transpose}
	For any $P\in U_{\Theta}^{>0}$ there is $Q\in U_{\Theta}^{>0}$ such that $P\sX=Q^t\sZ$ and $P^{-1}\sX=(Q^t)^{-1}\sZ$. Conversely for any   $Q\in U_{\Theta}^{>0}$ there is $P\in U_{\Theta}^{>0}$ such that $Q^t\sZ=P\sX$ and $(Q^t)^{-1}\sZ=P^{-1}\sX$. 
\end{lem}
\begin{proof}
	This is a direct computation. For example if $P=P(v)$ for some $v\in c_J(V_J)$ then $Q=P(q_J(v)^{-1}Jv)$. Since $P^{-1}= P(-v)$, we also get $P^{-1}\sX= P(-q_J(v)^{-1}Jv)^t \sZ$, as desired. 
\end{proof}

\begin{lem}\label{l.trival-tangent}
	Given $Q\in U_{\Theta}^{>0}$. Then $Q^t$ acts trivially on $T_{\sX}\Is_1(\R^{2,q})$.
\end{lem}
\begin{proof}
	We identify $T_{\sX}\Is_1(\R^{2,q})$ with $V_J$ via the inverse of the map $v\mapsto \left.\frac{d}{ds}\right|_{s=0} P(sv)\sX$. We compute
	$$P(w)^t P(sv)\sX= 
	%\begin{pmatrix} s^2q_J(v)\\ s^2q_J(v)w + sJv \\ s^2q_J(v)q_J(w)+2sb_J(v,Jw)+1  
	%\end{pmatrix} = 
%\begin{pmatrix} \frac{s^2q_J(v)}{s^2q_J(v)q_J(w)+2sb_J(v,Jw)+1}\\ \frac{s^2q_J(v)w + sJv}{s^2q_J(v)q_J(w)+2sb_J(v,Jw)+1}  \\ 1   
%\end{pmatrix} = 
\begin{pmatrix} \frac{s^2q_J(v)}{s^2q_J(v)q_J(w)+s(v^tw)+1}\\ \frac{s^2q_J(v)w + sJv}{s^2q_J(v)q_J(w)+s(v^tw)+1} \\ 1   
	\end{pmatrix}
.$$
Taking $\left.\frac{d}{ds}\right|_{s=0}$ gives the desired result.
\end{proof}
}

\subsection{$\T$-Positive representations and $\T$-positive curves}\label{s.posproj}

\begin{defn}[{\cite[Definition 5.3]{GWpositivity}}]
	A map $\xi:\mathbb S^1\to \calF_{p-1}$ is \emph{$\T$-positive} if, for every positively oriented $n$-tuple $(x_1,\ldots,x_n)\in\mathbb{S}^1$, the $n$-tuple $(\xi(x_1),\ldots,\xi(x_n))\in\calF_{p-1}^n$ is $\T$-positive. 
\end{defn} 

{{The following is an immediate consequence of Corollary~\ref{lem.deform positive tuples} and the fact that the set of positively oriented $n$--tuples in $(\mathbb{S}^1)^{(n)}$ is connected.
		
		\begin{cor}\label{cor.pos tuples}
			Let $\xi:\mathbb{S}^1\to \calF_{p-1}$ be a continuous transverse curve. If the image of one positively oriented $n$-tuple in $ \mathbb{S}^1$ under $\xi$ is a $\T$-positive $n$-tuple in $\calF_{p-1}^n$, then every positively oriented $n$-tuple in $ \mathbb{S}^1$ is mapped to a $\T$-positive $n$-tuple in $\calF_{p-1}^n$.
		\end{cor}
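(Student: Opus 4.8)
The plan is to transport $\Theta$-positivity along a path in the space of positively oriented $n$-tuples, using Corollary~\ref{lem.deform positive tuples} as the deformation principle.

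Concretely, I would fix the positively oriented $n$-tuple $(x_1^\ast,\dots,x_n^\ast)\in(\mathbb S^1)^{(n)}$ for which $(\xi(x_1^\ast),\dots,\xi(x_n^\ast))$ is $\T$-positive, take an arbitrary positively oriented $n$-tuple $(x_1,\dots,x_n)$, and join the two inside the set $P_n$ of positively oriented $n$-tuples. This set is connected by the quoted fact, and since it is open in the manifold $(\mathbb S^1)^{(n)}$ it is also path-connected; hence there is a continuous path $\gamma=(\gamma_1,\dots,\gamma_n)\colon[0,1]\to P_n$ with $\gamma(0)=(x_1^\ast,\dots,x_n^\ast)$ and $\gamma(1)=(x_1,\dots,x_n)$.

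Then I would consider the path $c(t):=(\xi(\gamma_1(t)),\dots,\xi(\gamma_n(t)))\in\calF_{p-1}^n$. It is continuous because $\xi$ is continuous; and for each $t$ the points $\gamma_1(t),\dots,\gamma_n(t)$ are pairwise distinct, being the entries of a positively oriented tuple, so the transversality of $\xi$ forces $\xi(\gamma_i(t))\tv\xi(\gamma_j(t))$ for all $i\neq j$ and all $t$. Thus $c$ meets the hypotheses of Corollary~\ref{lem.deform positive tuples}, and since $c(0)=(\xi(x_1^\ast),\dots,\xi(x_n^\ast))$ is $\T$-positive, the corollary gives that $c(1)=(\xi(x_1),\dots,\xi(x_n))$ is $\T$-positive as well. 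As $(x_1,\dots,x_n)$ was arbitrary, every positively oriented $n$-tuple is mapped to a $\T$-positive $n$-tuple.

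The only non-formal point is the upgrade from connectedness to path-connectedness of $P_n$, which is automatic since $P_n$ is open in a manifold (one may alternatively observe directly that the configuration space of cyclically ordered $n$-tuples on the circle deformation retracts onto $\mathbb S^1$). Apart from this, the argument is a one-line consequence of the already established deformation result, so I do not expect any genuine obstacle.
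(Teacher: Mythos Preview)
Your proposal is correct and follows exactly the approach indicated in the paper: the corollary is stated there as an immediate consequence of Corollary~\ref{lem.deform positive tuples} together with the connectedness of the set of positively oriented $n$-tuples in $(\mathbb{S}^1)^{(n)}$, and you have simply spelled out that one-line argument carefully. The remark about upgrading connectedness to path-connectedness via openness in a manifold is the only detail you add, and it is harmless.
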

		
		In particular such a map $\xi$ is $\T$-positive if and only if for every $n\geq 3$ there is one positively oriented $n$-tuple in $ \mathbb{S}^1$ which is mapped to $\T$-positive $n$-tuple in $\calF_{p-1}^n$.}}

\begin{defn}[{cfr. \cite[Definition 5.3]{GWpositivity}}]
	A  representation $\rho:\Gamma\to \PO(p,q)$ is \emph{$\T$-positive Anosov} if it is $\T$-Anosov and the Anosov boundary map $\xi:\dg\to \calF_{p-1}$ is a $\T$-positive map. We will sometimes write just \emph{positive Anosov representation}.\footnote{In the original definition of $\Theta$-positive representations there is no requirement on the representation being Anosov. In the work of Guichard--Labourie--Wienhard \cite[Proposition 5.8]{GLW}, appearing at the same time as this paper, the authors show that any $\T$-positive representation is $\T$-positive Anosov. It is not immediate that a representation that is $\Theta$-positive and $\Theta$-Anosov is $\Theta$-positive Anosov according to our definition, as, a priori, the $\Theta$-positive, and the $\Theta$-Anosov boundary map might be different: the $\Theta$-positive boundary map might, a priori, not be dynamics preserving.}
\end{defn}

This is a conjugation invariant notion: the conjugate of a $\Theta$-Anosov representation is $\Theta$-Anosov, and the conjugate of a  $\Theta$-positive representation is $\Theta$-positive.

Important  examples of $\Theta$-positive Anosov representations are  representations in Fuchsian loci, namely the  subloci of the set of $\T$-positive Anosov representations arising through the following construction:
\begin{example}[cfr. {\cite[Section 7]{ABC-IM}}]\label{e.positive fuchsian}
	Let $\R^{p,p-1}\oplus \R^{q-p+1}=\R^{p,q}$ be an orthogonal splitting of $\R^{p,q}$,  denote by  $ \OO(p,p-1)\times \OO(q-p+1)\subset \OO(p,q)$ the subgroup preserving this splitting. For every   irreducible representation $\tau:\SL(2,\R)\to \SO(p,p-1)$, every discrete and faithful representation $\iota:\G\to \SL(2,\R)$   and  any representation $\alpha:\G\to \OO(q-p+1)$, the projectivization
			$$\r:\G\to\PO(p,q),\quad \r:=(\tau\circ\iota)\oplus\alpha$$
			is $\T$-positive Anosov.
	More generally $\tau\circ\iota$ can be replaced by any Hitchin representation $\eta:\G\to \SO(p,p-1)$.
\end{example}
 Combining Theorem~\ref{thm.C-intro} and the classification of connected components of the $\SO(p,q)$-character variety acheived in \cite{ABC-IM} we will be able to deduce that  if $q\neq p+1$, every $\T$-positive Anosov representation  into $\SO(p,q)$ can be deformed to such a representation  (Remark~\ref{rem.ht-components}) - cfr. Remark~\ref{rem.pos general groups} for $\T$-positive representations in $\OO(p,q)$ and $\SO(p,q)$.

\subsection{The boundary map of a $\Theta$-positive representation}\label{s.positive-bdy-map}
The Anosov boundary map of a $\T$-positive Anosov representation satisfies remarkable additional properties, that were proven by Sambarino, Wienhard and the second author and will be useful in our work as well.

We first consider the $k$--boundary maps for $k<p-1$. The following was shown in the proof of \cite[Theorem 10.3]{PSW2}. We include the simple argument here for the reader's convenience; this step of the proof of \cite[Theorem 10.3]{PSW2} does not require that the $\T$-positive curve is equivariant with respect to some representation.

{	Let $z\in\calF_{p-1}$ and $k<p-1$. We set
	$$\Is_{k}^{\tv z}:=\{y^{k}\in \Is_{k}(\R^{p,q})|\dim( (y^{k})^{\perp}\cap z^{k+1})=1,\; (y^{k})^{\perp}\cap z^{k-1}=\{0\}\}.$$	 
 We then have a well defined projection 
	$$\begin{array}{crcl}
		\pi^k_z:&\Is_{k}^{\tv z}&\to &\P\left(\quotient{z^{k+1}}{z^{k-1}}\right)\simeq \R\P^1,\\ &y^{k}&\mapsto &[((y^{k})^{\perp}\cap z^{k+1})+z^{k-1} ].
	\end{array}$$ 
	
		This yields a well defined projection $\pi_z^k(x)$ for any $x\in \calF_{p-1}$ transverse to $z\in \calF_{p-1}$ by restricting to $x^k$.
	
	\begin{prop}[cfr. {\cite[Theorem 10.3]{PSW2}}]\label{lem.kth-projection}
		For every $\T$-positive $(n+1)$-tuple $(x_1,\ldots,x_n,z)\in\calF_{p-1}^{(n+1)}$, and for every $k<p-1$, the $(n+1)$-tuple $(\pi_z^k(x_1),\ldots,\pi_z^k(x_n),[z^k])$ is cyclically ordered in $\P\left(\quotient{z^{k+1}}{z^{k-1}}\right)\simeq \R\P^1$.
	\end{prop}

	\begin{proof}
		This is a consequence of the explicit expression of a standard $\T$-positive triple recalled in Section~\ref{ss.pos}: we can assume without loss of generality that $(x_1,\ldots,x_n,z)=(\sX, P(\vec v_2)\sX,\ldots, P(\vec v_2)\cdots P(\vec v_n)\sX,\sZ)$ for $\T$-positive elements $P( \vec v_i)$, $\vec v_i\in V_{\T}^{p-1}$ for $i=2,\ldots,n$. Then, since $P( \vec v)$ is an element of the stabilizer of $\sZ$, we get
		\begin{eqnarray}
			\nonumber \left( \left(P(\vec v)\sX\right)^{q+p-k}\cap \sZ^{k+1}\right) +\sZ^{k-1}\\
			\nonumber =P(\vec v) \left(\sX^{q+p-k}\cap \sZ^{k+1}\right) +\sZ^{k-1}.
		\end{eqnarray}
	In particular it is enough to verify that the $k$-th coordinate of the vector $P(\vec v_2)e_{q+p-k}$ is positive and that the $k$-th coordinate of $P(\vec v_2)\cdots P(\vec v_i)\sX$ is bigger than the one of $P(\vec v_2)\cdots P(\vec v_{i+1})\sX$. Recalling the definitions one directly computes that $k$-th coordinate of the vector $P(\vec v)e_{q+p-k}$ is $\sum_{i=1}^{p-1}s_k^i >0$, where we set $\vec v=(v_1,\ldots v_{p-1})$ and $v_i=(s^i_1, \ldots, s^i_{p-1})$ and the $k$-th coordinate of $P(\vec v)P(\vec w)e_{q+p-k}$ is the sum of those of $P(\vec v)e_{q+p-k}$ and $P(\vec w)e_{q+p-k}$. Inductively the claim follows.
	\end{proof}
	
As a consequence, we obtain:
\begin{cor}[{\cite[Theorem 10.3]{PSW2}}]\label{prop.Hk for pos}
	Let $\xi:\mathbb{S}^1\to \calF_{p-1}$ be a $\T$-positive map. Then $\xi$ satisfies property $H_k$ for $k<p-1$, i.e. the transversalities of Equation \eqref{eq property H} are satisfied.
\end{cor}
\begin{proof}
This is a direct consequence of Lemma~\ref{lem.kth-projection} and Equation \eqref{e.Hq+p-k}.
\end{proof}	
}

{
	For every $z\in\calF_{p-1}$, the quotient $\quotient{z^{q+2}}{z^{p-2}}$ is naturally endowed with a bilinear form of signature $(2,q-p+2)$. Our next goal is to define a natural projection from a big subset of $\Is_{p-1}(\R^{p,q})$ to $\Is_1(\quotient{z^{q+2}}{z^{p-2}})$. 
	 Observe first that $z^{p-1}$ induces a point $[z^{p-1}]\in \Is_{1}(\quotient{z^{q+2}}{z^{p-2}})$. 
	We  denote by $\Is_{p-1}^{\tv z}$ the subset of $\Is_{p-1}(\R^{p,q})$ consisting of subspaces in general position with $z^{p-2}$: 
	$$\Is_{p-1}^{\tv z}:=\{y^{p-1}\in \Is_{p-1}(\R^{p,q})| \dim (y^{p-1}\cap z^{q+2})=1,\; y^{p-1}\cap z^{p-2}=\{0\}\}.$$	 
 We then have a well defined projection 
	$$\begin{array}{crcl}
		\pi_z^{p-1}:&\Is_{p-1}^{\tv z}&\to &\Is_1\left(\quotient{z^{q+2}}{z^{p-2}}\right),\\ &y^{p-1}&\mapsto &[(y^{p-1}\cap z^{q+2})+z^{p-2} ].
	\end{array}$$ 
For the projection $\pi_z$ we have: 	
	\begin{prop}[cfr. {\cite[Proposition 10.5]{PSW2}}]\label{prop.projections are positive}
	If the $(n+1)$-tuple $(x_1,\ldots,x_n,z)\in\calF_{p-1}^{(n+1)}$ is $\T$-positive, then $$(\pi_z^{p-1}(x_1),\ldots,\pi_z^{p-1}(x_n),[z^{p-1}])\in\Is_1(\quotient{z^{q+2}}{z^{p-2}})^{(n+1)}$$ 
	is $\T$-positive.
	\end{prop}
	\begin{proof}
 It follows from Equation \eqref{e.posntuple} that we can assume, up to conjugating with a suitable element $g\in\PO(p,q)$, that there exist $\vec{v}_2$,$\ldots,\vec{v}_{n}\in V_{\Theta}^{p-1}$ such that
		$$(x_1,\ldots,x_n,z)=(\sX, P(\vec{v}_2)\sX,\ldots, P(\vec{v}_2)\cdots P(\vec{v}_{n})\sX,\sZ).$$

		For $1\leq k\leq p-2$ the elementary matrices $E_k(s)$ act by the identity on $\quotient{\sZ^{q+2}}{\sZ^{p-2}}$, while the elementary matrix $E_{p-1}(\sfs)$ acts by the corresponding $\T$-positive element (as in Example~\ref{e.3.9}), which we still denote, with a slight abuse of notation, by  $E_{p-1}(\sfs)$. Given $\vec{v}\in V^{p-1}_{\Theta}$ we denote by $\vec{v}^{p-1}=\sum_{i=1}^{p-1} \sfs_{p-1}^i\in c_J(V_J)$. It then follows from the definition that the $(n+1)$-tuple $(\pi_z^{p-1}(x_1),\ldots, \pi_z^{p-1}({x_n}), [z^{p-1}])$ is  given by 
		$$([e_{q+2}], P(\vec{v}^{p-1}_2)[e_{q+2}],\ldots, P(\vec{v}^{p-1}_2)\cdots P(\vec{v}_{n}^{p-1})[e_{q+2}],[e_{p-1}]),$$
		which concludes the proof.
	\end{proof}	

}

Let now $\r:\G\to \PO(p,q)$ be $\{p-2,p-1\}$--Anosov.  For every $x\in\dg$ we can use the projection $\pi_{x_\rho}$ to  define  a curve $\xi_x:\dg\to \Is_{1}(\quotient{x_\rho^{q+2}}{x_\rho^{p-2}})$ by 
\begin{equation}\label{e.xix}
	\begin{cases}
		\xi_x(y)&:= \pi_{x_\rho}^{p-1}(y_\rho^{p-1})\\
		\xi_x(x)&:= [x_\rho^{p-1}].
	\end{cases}
\end{equation}
An immediate consequence of Proposition~\ref{prop.projections are positive} is then
	\begin{cor}[{\cite[Proposition 10.5]{PSW2}}]\label{cor.projections are positive}
	Let $\r:\G\to\PO(p,q)$ be $\T$-positive Anosov with boundary map $\xi:\dg\to \calF_{p-1}$. Then $\xi_z:\dg\to \Is_{1}(\quotient{z_\rho^{q+2}}{z_\rho^{p-2}})$ 
	is a $\T$-positive curve for every $z\in\dg$.
\end{cor}

The following regularity properties of the map $\xi$ was obtained in \cite{PSW2} as a consequence of  Corollary~\ref{prop.Hk for pos} and Corollary~\ref{cor.projections are positive}. We  record it here for future reference:% it is possible to deduce the following regularity properties of the map $\xi$ (compare Proposition~\ref{prop.1-hyperconvex for property Hk}, {\cite[Corollary 10.4]{PSW2}} and {\cite[Proposition 10.5]{PSW2}}).

\begin{thm}[{\cite[Corollary 10.4 and Proposition 10.5]{PSW2}}]\label{t.Lipschitz}
	Let $\rho:\Gamma\to \PO(p,q)$ be \emph{$\T$-positive Anosov}. Then the curve $\xi^{p-1}:\dg\to \Is_{p-1}(\R^{p,q})$ has Lipschitz image, and the curves $\xi^{k}:\dg\to \Is_{k}(\R^{p,q})$ for $k\leq p-2$ have $C^1$ image.
\end{thm}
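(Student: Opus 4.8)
The plan is to establish the two regularity statements separately, using the two positivity results quoted just before: Proposition~\ref{prop.Hk for pos} for the $k\le p-2$ case, and Proposition~\ref{prop.projections are positive} for the $(p-1)$--case.

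For $k\le p-2$: Proposition~\ref{prop.Hk for pos} tells us that a positive map $\xi$ satisfies property $H_k$ for every such $k$. By Proposition~\ref{prop.1-hyperconvex for property Hk} (Labourie's hyperconvexity criterion from \cite{PSW1}), property $H_k$ immediately yields that $\xi^k(\dg)$ has $C^1$ image, with the tangent space at $x^k$ described explicitly. So this half is a one-line deduction chaining the two cited propositions; there is nothing to do beyond noting that a $\T$--positive Anosov representation has a positive boundary map satisfying property $H_k$ by Proposition~\ref{prop.Hk for pos} and that the hypotheses of Proposition~\ref{prop.1-hyperconvex for property Hk} (being $\{k-1,k,k+1\}$--Anosov) hold because a $\T$--positive Anosov representation is $\{1,\dots,p-1\}$--Anosov by definition.

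For $k=p-1$: here property $H_{p-1}$ need not hold, so the argument is different and this is where the work sits. By Proposition~\ref{prop.projections are positive}, for every $x\in\dg$ the induced curve $\xi_x:\dg\to\Is_1(x^{q+2}/x^{p-2})$ is a positive curve for the $\PO(2,q-p+2)$--positive structure; in particular it is a continuous transverse curve into a projectivized $\R^{2,q-p+2}$. The strategy is to transfer the regularity of $\xi_{x}$ near $x$ back up to $\xi^{p-1}$ near $x^{p-1}$. Concretely, near $x$ the flag $\xi^{p-1}(y)$ is recovered from $\xi^{p-2}(y)$ (already $C^1$, hence in particular Lipschitz, by the previous case) together with the line $\xi_x(y)=[\,y^{p-1}\cap x^{q+2}\,]$ inside the quotient $x^{q+2}/x^{p-2}$. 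So one needs (i) Lipschitz regularity of the positive curve $\xi_x$ at the point $x$ itself, and (ii) a quantitative statement that the map $y\mapsto y^{p-1}$ is Lipschitz in terms of the data $y\mapsto(y^{p-2},\xi_x(y))$, uniformly as the basepoint varies over the compact set $\dg$. For (i), a positive curve into $\Is_1(\R^{2,n})$ lies in the boundary of a convex cone (the positive/semi-positive cone $\ov c_J$ after normalization as in Example~\ref{e.3.9} and Lemma~\ref{l.fourpos}), and a curve bounding a convex region is automatically Lipschitz; this is the analytic core of \cite[Proposition 10.3]{PSW2} and can be invoked as such. For (ii) one uses that $\xi^{p-2}$ is $C^1$ to choose, locally, smoothly varying complements and thereby write $\xi^{p-1}(y)$ as a graph over a fixed subspace whose defining linear data depends Lipschitz-continuously (in $y$) on $(\xi^{p-2}(y),\xi_x(y))$; compactness of $\dg$ and continuity of $\xi^{p-2}$ then upgrade this to a global Lipschitz bound.

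The main obstacle is step (i) combined with the uniformity in step (ii): one must extract a genuine Lipschitz (not merely rectifiable or $C^0$) estimate for the positive curves $\xi_x$ that holds \emph{uniformly} in the basepoint $x$, and then see that the reconstruction of $\xi^{p-1}$ from $\xi^{p-2}$ and $\xi_x$ does not lose this. Since this is precisely the content of \cite[Proposition 10.3]{PSW2}, the cleanest route is to cite that proposition for the uniform Lipschitz regularity of the family $\{\xi_x\}$ and of $\xi^{p-1}$, and to spend the remaining effort only on recording why being $\T$--positive Anosov puts us in its hypotheses and why the $C^1$ claim for $k\le p-2$ follows from Propositions~\ref{prop.Hk for pos} and~\ref{prop.1-hyperconvex for property Hk}. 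In other words, the theorem as stated is essentially a repackaging of the two inputs from \cite{PSW1,PSW2}, and the proof is short; the only care needed is in matching the $\PO(2,q-p+2)$ positive structure appearing in Proposition~\ref{prop.projections are positive} with the model of Example~\ref{e.3.9} so that the convexity-gives-Lipschitz argument applies verbatim.
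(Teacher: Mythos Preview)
The paper does not prove this theorem at all: it is stated with the attribution \cite[Proposition 10.3]{PSW2} and is used as a black box in Section~\ref{sec:pos-ratio}. There is therefore no ``paper's own proof'' to compare against; the theorem is simply imported from \cite{PSW2}.

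Your proposal is a reasonable outline of how the result is obtained in \cite{PSW1,PSW2}, and you yourself recognize this when you write that the cleanest route is to cite \cite[Proposition 10.3]{PSW2}. The $C^1$ claim for $k\le p-2$ is indeed exactly the chain Proposition~\ref{prop.Hk for pos} $\Rightarrow$ Proposition~\ref{prop.1-hyperconvex for property Hk}. For $k=p-1$ your sketch captures the right mechanism (positivity of the projected curves $\xi_x$ and a convexity argument in $\Is_1(\R^{2,n})$), though the details of the uniform Lipschitz estimate live in \cite{PSW2} and are not reproduced in the present paper. In short, nothing is wrong with your plan, but for the purposes of this paper the correct ``proof'' is a bare citation.
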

In particular the image of $\xi$ is a Lipschitz submanifold of $\calF_{p-1}$, since it is the graph of monotone functions between Lipschitz circles.

%%%%%%%%%%%%%%%
\section{$\Theta$-positive representations are positively ratioed}\label{sec:pos-ratio}

{{In this section we describe tangent cones defined by $\T$-positive triples in $\calF_{p-1}$ that naturally come from the $\T$-positive structure. The main technical result of this section is that the cross ratio increases along these tangent cones (Proposition~\ref{prop.derivative cross ratio for positive triple: general}). This implies that $\Theta$-positive Anosov representations into $\PO(p,q)$ are $k$--positively ratioed for $k=1,\ldots,p-1$ (Theorem~\ref{thm.pos ratioed}).  Proposition~\ref{prop.derivative cross ratio for positive triple: general} will also be crucially used in the proof of the collar lemma.}}

The proof of  Proposition~\ref{prop.derivative cross ratio for positive triple: general} will build upon the Lipschitz regularity of the image of the boundary maps (Theorem~\ref{t.Lipschitz}). As a result, for every $k$, we can (and will) choose a Lipschitz structure on $\dg\simeq\mathbb S^1$ defining a Lebesgue measure class with the property that for almost every point $x\in\dg$, the curve $\xi^{k}$ has a non-zero derivative $\dot\xi^{k}(x)\in T_{\xi^k(x)}\Is_{k}$. The Lipschitz structures on $\dg$ induced by the boundary maps $\xi^k$ are, except for very special cases, mutually singular.  

In our analysis we will need a more precise understanding of where these derivatives lie, this is provided in the next subsection. In Subsection~\ref{s.5.2} we will  use this information to show that the derivative of the cross ratio is positive {along explicit paths that we construct with the given derivative}. Thanks to Rademacher's theorem, this is enough to conclude the main result of this section (Theorem~\ref{thm.pos ratioed}).
\subsection{Tangent cones}
The goal of this subsection is to define, for every $\T$-positive triple $(x,y,z)\in\calF_{p-1}^3$ and every $1\leq k\leq p-1$, a cone $c_k^+(x,y,z)$  in a linear subspace of $T_{x^k}\Is_k(\R^{p,q})$
with the property that, for each equivariant $\T$-positive curve $\xi$ containing $(x,y,z)$ in its image and differentiable at $x$, one has $\dot\xi^k(x)\in  c_k^+(x,y,z)$. We will also show that, if $x,y,z$ belong to the image of a positive curve $\xi:\dg\to\calF_{p-1}$, the cone $c_k^+(x,y,z)$ only depends on the point $x$, and on an orientation of $\dg$.

We focus first on {\bf the case $p=2$}, so that $\calF_1(\R^{2,q})=\Is_1(\R^{2,q})$. Given a $\T$-positive triple $(x,y,z)$, we choose $g\in\PO(2,q)$ such that $(gx,gy,gz)=(\sX,P \sX, \sZ)$ for some $\T$-positive element $P\in U_{\T}^{>0}$.
{The unipotent radical $U_\T$ of the stabilizer of $\sZ$ acts freely and transitively on the open subset of $\calF_{p-1}$ consisting of elements transverse to $\sZ$; through this action we obtain an open chart around the point $\sX$ identified with $U_\T$. Differentiating this chart and composing with the differential action of $g^{-1}$ we obtain a linear isomorphism between the Lie algebra $\frak u_\Theta$ of $U_\T$ and $T_{x^1}\Is_1(\R^{2,q})$.} 
%Then $T_{x^1}\Is_1(\R^{2,q})$ identifies with the Lie algebra of the unipotent radical of the stabilizer of $\sZ$ which is given by 
Recall from Section~\ref{sec.prelim positive reps} that $\frak u_\Theta$ consists of matrices of the form
$$\left\lbrace \left.\bpm 0 & \sfw^t & 0\\ 0 & 0 & J\sfw\\ 0& 0&0 \epm \right|  \sfw\in\R^q\right\rbrace,$$
and thus we obtain an isomorphism $f_g:T_{x^1}\Is_1(\R^{2,q})\to V_J$. Recalling Equation \eqref{e.cJ} in Section~\ref{sec.prelim positive reps}, we define
%We use this identification to define 
\begin{equation}\label{e.cone}
	c_1^{+}{(x,y,z)}:= f_g^{-1}( \ov c_J(V_J)).
\end{equation}
{
%	\begin{remark}
		In the interpretation of $\Is_1(\R^{2,q})$ as the Einstein universe from Section~\ref{s.2qpositive},  the cone $c_1^{+}{(x,y,z)}$ is the tangent at $x$ to the cone $C_{x,z}(y)$. {The following lemma should be clear for the experts in Einstein geometry, we include a proof in our framework.}
%	\end{remark}	

\begin{lem}\label{l.conewdef1}\
	\begin{enumerate}
	\item The cone $c_1^{+}(x,y,z)$ doesn't depend on the element $g$.
	\item For every $g\in \PO(2,q)$, $c_1^{+}(x,y,z)=g_*^{-1}c_1^+(gx,gy,gz)$
	\item If $(x,y,z,w)\in\Is_1(\R^{2,q})^{(4)}$  is positive, then  $c_1^{+}(x,y,w)=c_1^{+}(x,z,w)=c_1^{+}(x,y,z)$. Furthermore $c_1^{+}(x,y,z)=-c_1^{+}(x,z,y)$.
	\end{enumerate}
\end{lem}
\begin{proof}\
	\begin{enumerate}
	\item Let $g_1,g_2$ be such that $(g_ix,g_iy,g_iz)=(\sX,P_i \sX, \sZ)$ for $\T$-positive elements $P_i\in U_{\T}^{>0}$. Then $h=g_2g_1^{-1}$ belongs to the stabilizer of the pair $(\sX,\sZ)$, and since $hP_1=P_2\in U_\T^{>0}$, $h$ preserves the cone $\ov c_J(V_J)$. This implies that $f_{g_1}^{-1}(\ov c_J(V_J))=f_{g_2}^{-1}(h^{-1}_*\ov c_J(V_J))=f_{g_2}^{-1}(\ov c_J(V_J))$.
	\item Follows directly from (1).
	\item{ 
		%First observe that by definition, for any $P,P'\in U_{\T}^{>0}$, we have $c_1^+(\sX,P\sX,\sZ)=c_1^+(\sX,P'\sX,\sZ)$. 
		Pick $g\in \PO(2,q)$ such that $(gx,gy,gw,gz)=(\sX,P_2\sX,P_2P_3\sX,\sZ)$. Since $P_2P_3\in U_{\T}^{>0}$, it follows by definition that $$c_1^+(x,y,w)=f_g^{-1}( \ov c_J(V_J))= c_1^+(x,z,w).$$
		
	Next we show  that $c_1^{+}(x,y,z)=-c_1^{+}(x,z,y)$.  As by (2) $c_1^{+}(x,y,z)=g_*^{-1}c_1^+(gx,gy,gz)$ for any $g\in \PO(2,q)$ with differential $g_*$, it is enough to show that $c_1^{+}(\sX,P\sX,\sZ)=-c_1^{+}(\sX,\sZ,P\sX)$. Let $$H=\begin{pmatrix} -1 & &\\ & {\rm Id}_q & \\ & & -1\end{pmatrix}\in \PO(2,q).$$ 
	Then $H$ stabilizes $\sX$ and $\sZ$ and acts as $-{\rm Id}$ on $T_{\sX}\Is_1(\R^{2,q})$. Let $P=P(v)$ for some $v\in c_J(V_J)$, then
	$$HP(v)\sX= HP(v)H\sX=P(-v)\sX=P^{-1}\sX.$$ 
	Thus $c_1^{+}(\sX,P^{-1}\sX,\sZ)=- c_1^{+}(\sX,P\sX,\sZ)$. Let $Q\in U_{\T}^{>0}$ such that $P\sX=Q^t\sZ$, which exist by Lemma~\ref{l.transpose}. According to Lemma~\ref{l.transpose} we also have $(Q^t)^{-1}\sZ=P^{-1}\sX$. Thus 
	$$\hspace{1.2cm} (\sX,\sZ,P\sX)=(\sX,\sZ,Q^t\sZ)=Q^t\cdot (\sX,(Q^t)^{-1}\sZ,\sZ)=Q^t\cdot (\sX,P^{-1}\sX,\sZ).$$
 Therefore $c_1^+(\sX,\sZ,P\sX)=Q^t_* c_1^+(\sX,P^{-1}\sX,\sZ)$. Since the differential $Q^t_*$ acts trivially on $T_{\sX}\Is_1(\R^{2,q})$ by Lemma~\ref{l.trival-tangent}, $Q^t_*$  preserves the cone $c_1^+(\sX,P^{-1}\sX,\sZ)=- c_1^{+}(\sX,P\sX,\sZ)$. This proves the claim.
 
 Finally we show $c_1^{+}(x,y,z)=c_1^{+}(x,y,w)$. For this recall that by Proposition~\ref{c.npermutation} $(x,w,z,y)$ is also a positive quadruple. Thus by what we have shown so far
 $$\hspace{1.9cm} c_1^{+}(x,y,z)=-c_1^{+}(x,z,y)=-c_1^{+}(x,w,y)=c_1^{+}(x,y,w).\hspace{1cm}\qedhere$$}
	\end{enumerate}	
\end{proof}	
In particular if $x=\xi(a)$ is in the image of a $\T$-positive curve $\xi:\mathbb S^1\to \Is_1(\R^{2,q})$, the cone $c_1^{+}(x,\xi(b),\xi(c))$ doesn't depend on $b,c$, but only on the orientation induced on the circle by the triple $(a,b,c)$. For this reason, in this situation, we will simply write 
$$c_1^{+}(x):=c_1^{+}(x,\xi(b),\xi(c)) \quad \text{ for any $b,c$ inducing the given orientation.}$$
}
\begin{lem}\label{l.der1}
	Let $\xi:\mathbb S^1\to \Is_1(\R^{2,q})$ be $\Theta$-positive. If $\xi$ is differentiable at $x\in\mathbb S^1$ with non-zero derivative, then
	$$\dot\xi(x)\in c^+_1(x).$$
	Thus if $\xi$ is the boundary map of a $\T$-positive representation, then $\dot\xi(x)\in c^+_1(x)$ for almost all $x\in\dg\simeq \mathbb{S}^1$.
\end{lem}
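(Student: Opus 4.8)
The plan is to reduce to the case $(x,y,z)=(\sX, P\sX, \sZ)$ using $\PO(2,q)$-invariance, and then use the description of $\T$-positive $n$-tuples together with the explicit chart $f_x$ identifying $T_{x^1}\Is_1(\R^{2,q})$ with $V_J$ via the unipotent radical of $\Stab(\sZ)$. Concretely, $x=\sX=e_{q+2}$, and every flag $w\in\Is_1(\R^{2,q})$ transverse to $\sZ=\langle e_1\rangle$ has a unique representative of the form $(q_J(\sfs_w),\sfs_w,1)$ (as recalled just before Lemma \ref{l.fourpos}), so near $x$ the curve $\xi$ is recorded by the map $t\mapsto \sfs_{\xi(t)}\in V_J$, and the chart $f_x$ sends a tangent vector to the derivative of $\sfs_{\xi(t)}$ — one should check that $\sfs_x=\sfs_{e_{q+2}}=0$, so $f_x(\dot\xi(x))=\frac{d}{dt}\big|_{t=0}\sfs_{\xi(t)}$.

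The next step is the key one: I would pick points $y_1<x<y_2<y$ on $\mathbb S^1$ (in positive cyclic order, so that $(e_{q+2},\cdot,\cdot,e_1)$-type orderings apply), and exploit that $\xi$ is positive, hence every positively ordered $4$-tuple is sent to a positive $4$-tuple in $\calF_1(\R^{2,q})$. For $t>0$ small, the $4$-tuple $(\xi(y_1),\xi(x),\xi(x+t),\xi(y))$ — suitably normalized by $g\in\PO(2,q)$ carrying $(\xi(y_1),\xi(y))$ to $(e_{q+2},e_1)$ — is positive, so Lemma \ref{l.fourpos} gives that $\sfs_{\xi(x+t)}-\sfs_{\xi(x)}$ lies in $c_J(V_J)$, i.e. is $q_J$-positive with positive first entry. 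Taking $t\to 0^+$ and using differentiability, the derivative $f_x(\dot\xi(x))=\lim_{t\to 0^+}\frac1t(\sfs_{\xi(x+t)}-\sfs_{\xi(x)})$ lies in the closure $\ov c_J(V_J)$ of this cone (the strict inequality $q_J>0$ degenerates to $q_J\geq 0$ in the limit, but the first-coordinate inequality $\sfw_{p-1}>0$ is retained because the derivative is assumed nonzero and the limiting vector still has nonnegative first coordinate — one must argue it cannot be zero, using that a nonzero semipositive vector for $q_J$ with nonnegative first coordinate has positive first coordinate, by the structure of $c_J$). Hence $f_x(\dot\xi(x))\in\ov c_J(V_J)$, which by \eqref{e.cone} says precisely $\dot\xi(x)\in c_1^+(x)$.

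Finally, for the general-triple case one conjugates by $g$ with $(gx,gy,gz)=(\sX,P\sX,\sZ)$: positivity of $\xi$ is $\PO(2,q)$-equivariant, $c_1^+(x)$ is defined via $f_x$ which is the chart pulled back through $g$, so the conclusion transports. The last sentence of the lemma, asserting $\dot\xi(x)\in c_1^+(x)$ for almost every $x$, then follows immediately from Theorem \ref{t.Lipschitz}: the boundary curve $\xi^1$ of a positive representation has Lipschitz image, so by Rademacher's theorem (after fixing a Lipschitz parametrization of $\dg\simeq\mathbb S^1$ as in the discussion preceding this subsection) it is differentiable with nonzero derivative almost everywhere, and we have just shown the derivative lands in $c_1^+(x)$ at every such point.

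The main obstacle I anticipate is the limiting argument showing the \emph{first-coordinate} strict inequality survives passing to the limit: $q_J$-positivity only closes up to $q_J\geq 0$, so one needs to rule out that $f_x(\dot\xi(x))$ is a nonzero null vector with zero first coordinate. This should follow from unwinding the definition of $\ov c_J(V_J)$ and the remark (made right after \eqref{e.cJ}) that every vector in $\ov c_J(V_J)$ has positive last coordinate too — combined with the fact that the difference quotients all have strictly positive first coordinate, so the limit has nonnegative first coordinate, and a separate check (perhaps using that $\xi$ is transverse, hence the derivative direction is genuinely "new") forces it to be in $\ov c_J(V_J)$ rather than on its boundary face $\{\sfw_{p-1}=0\}$.
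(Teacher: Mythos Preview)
Your approach is the paper's: normalize so that $(\xi(x),\xi(y),\xi(z))=(\sX,P\sX,\sZ)$, observe via Lemma~\ref{l.fourpos} (the paper just cites Example~\ref{e.3.9}) that the difference quotients $\tfrac1t\bigl(\sfs_{\xi(x+t)}-\sfs_{\xi(x)}\bigr)$ lie in $c_J(V_J)$, and pass to the limit; the almost-everywhere clause follows from Theorem~\ref{t.Lipschitz} exactly as you and the paper say.

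You are right to flag the first-coordinate issue, but your proposed fix is not correct: a nonzero $q_J$-semipositive vector with nonnegative first coordinate need \emph{not} have strictly positive first coordinate---the null vector $(0,\ldots,0,1)$ is a counterexample---so a nonzero limit of vectors in $c_J(V_J)$ can in principle land only in the topological closure $\{\sfw_{p-1}\ge 0,\;q_J(\sfw)\ge 0\}\setminus\{0\}$ rather than in the partial closure $\ov c_J(V_J)=\{\sfw_{p-1}>0,\;q_J(\sfw)\ge 0\}$. The paper's two-line proof glosses over the same point. It is, however, harmless for every downstream application: the only use of $\dot\xi(x)\in c_1^+(x)$ is through Lemma~\ref{lem.pre.positiv bJ} (inside Lemma~\ref{lem.derivative cross ratio for positive triple p=2}), and one checks directly that $b_J(\sfv,\sfw)>0$ still holds for $\sfv\in c_J(V_J)$ and any nonzero $\sfw$ in the topological closure---if $\sfw=(0,\ldots,0,c)$ with $c>0$ then $b_J(\sfv,\sfw)=\tfrac12\,\sfv_{p-1}\,c>0$. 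So your argument establishes what is actually needed, even if not the lemma exactly as stated.
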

\begin{proof}
	Let $x$ be a differentiability point for the map $\xi$. We complete $x$ to a positively oriented triple $(x,y,z)\in(\mathbb S^1)^3$, and choose an element $g\in\PO(2,q)$ so that $(g\xi(x), g\xi(y), g\xi(z))=(\sX,P \sX, \sZ)$. In this way we obtain a  smooth chart 
	$$\begin{array}{ccc}
		V_J&\to&U_x\subset \Is_1(\R^{2,q})\\
		\sfw&\mapsto&g^{-1}\exp(\sfw)\cdot\sX.
		\end{array}$$ 
	Working in this smooth chart, the derivative of $\xi$, where defined, is the limit of rescaled partial increments. Since each of those belong to $c_J(V_J)$ (compare Example~\ref{e.3.9}), % Equation \eqref{eq.explicit u(w) for p=2}), 
	the limit belongs to the closure $\ov c_J(V_J)$. 
	It follows from Theorem~\ref{t.Lipschitz} that almost every point $x\in\dg$ is a differentiability point for $\xi$.
\end{proof}

We now turn to {\bf the general situation}. Recall that for any $x\in\calF_{p-1}$, the quotient $\quotient{x^{q+2}}{x^{p-2}}$ is naturally endowed with a form of signature $(2,q-p+2)$, and there is a natural inclusion
$$\phi_x:\Is_1\left( \quotient{x^{q+2}}{x^{p-2}}\right)\to \Is_{p-1}(\R^{p,q})$$
whose image is the analitc submanifold consisting of $(p-1)$-dimensional isotropic subspaces contained in $x^{q+2}$ and containing $x^{p-2}$ as a subspace. In order to improve readability  we will denote by $\Is_{p-1}\left( \quotient{x^{q+2}}{x^{p-2}}\right)$ the image of $\phi_x$.
 
Realizing $\Is_{p-1}(\R^{p,q})$ as a subset of the $(p-1)$--Grassmannian of $\R^{p+q}$, we obtain, for any $z^{q+1}$ transverse to $x^{p-1}$, an  inclusion
$$T_{x^{p-1}}\Is_{p-1}(\R^{p,q})<\Hom(x^{p-1},z^{q+1}).$$
The tangent space to  $\phi_x(\Is_1(\quotient{x^{q+2}}{x^{p-2}}))$ is then given by  linear maps in $T_{x^{p-1}}\Is_{p-1}(\R^{p,q})$ that vanish on $x^{p-2}$ and have image in $x^{q+2}$:
$$T_{x^{p-1}}\Is_{p-1}\left(\quotient{x^{q+2}}{x^{p-2}}\right):=\left\{\Phi\in T_{x^{p-1}}\Is_{p-1}(\R^{p,q})\left|\;\begin{array}{l} \Phi(x^{p-2})=0\\ \Phi(x^{p-1})< x^{q+2}\cap z^{q+1}\end{array}\right.\right\}.$$

Let now $(x,y,z)\in\calF_{p-1}^{(3)}$ be $\T$-positive. Recall from Proposition~\ref{prop.projections are positive} that we denote by $\pi_x^{p-1}: \Is_{p-1}^{\tv z}\to\Is_1(\quotient{x^{q+2}}{x^{p-2}})$ the natural projection, and that if $(x,y,z)\in\calF_{p-1}^{(3)}$ is $\T$-positive, then $y,z$ belong to $\Is_{p-1}^{\tv z}$ and the triple $([x^{p-1}], \pi_x^{p-1}(y),\pi_x^{p-1}(z))\in \Is_1(\quotient{x^{q+2}}{x^{p-2}})$ is $\T$-positive. In the last statement we are also using that any  permutation of a $\T$-positive triple is $\T$-positive (Corollary~\ref{c.permutation}).
\begin{defn}
	For a $\T$-positive triple $(x,y,z)\in\calF_{p-1}^{(3)}$ we set
	$$c_{p-1}^+(x,y,z):=d\phi_x(c_1^+([x^{p-1}],\pi_x^{p-1}(y),\pi_x^{p-1}(z))).$$
\end{defn}
It is clear from the construction that for every $\T$-positive triple $(x,y,z)\in\calF_{p-1}^{(3)}$ and for any $g\in\PO(p,q)$ it holds 
$$c_{p-1}^+(gx,gy,gz)=g_*c_{p-1}^+(x,y,z),$$
where $g_*$ denotes the induced action of $g$ on $T_*\Is_{p-1}(\R^{p,q})$.

Combining Lemma~\ref{l.conewdef1} and Proposition~\ref{prop.projections are positive}, which ensures that the image under $\pi_x^{p-1}$ of a positive $n$-tuple is positive, we obtain that if $x=\xi(a)$ is in the image of a $\T$-positive curve $\xi:\mathbb S^1\to \calF_{p-1}(\R^{p,q})$, the cone $c_{p-1}^{+}(x,\xi(b),\xi(c))$ doesn't depend on $b,c$, but only on the orientation induced on the circle by the triple $(a,b,c)$. As in the case $p=2$, in this situation, we will write 
$$c_{p-1}^{+}(x):=c_{p-1}^{+}(x,\xi(b),\xi(c)) \quad \text{ for any $b,c$ inducing the given orientation.}$$}
While in the case $p=2$ the cone $c_1^+(x)$ has full dimension, for $p\geq 3$ it is supported in a proper vector subspace.

In the following proposition we use the $\T$-positive curves  $\xi_x:\dg\to \Is_1(\quotient{x^{q+2}}{x^{p-2}})$ associated to a $\T$-positive curve $\xi:\dg\to \calF_{p-1}(\R^{p,q})$ through Equation \eqref{e.xix}.

\begin{prop}\label{p.p-1derivative}
	Let $\rho:\G\to\PO(p,q)$ be $\T$-positive Anosov. If the derivative $\dot\xi^{p-1}(x)$ of $\xi$ at $x$ exists, 
	we have
	$$\dot\xi^{p-1}(x)\in c^+_{p-1}(x).$$
\end{prop}
\begin{proof}
We divide the proof in two steps. First we show that if the derivative $\dot\xi^{p-1}(x)$ of $\xi$ at $x$ exists, then $\dot\xi^{p-1}(x)\in T_{x^{p-1}}\Is_{p-1}\left(\quotient{x^{q+2}}{x^{p-2}}\right)$.
	
	We denote by $\Psi\in\Hom(x^{p-1},z^{q+1})$ the vector corresponding to the derivative $\dot\xi^{p-1}(x)$. Since $\rho$ has property $H_{p-2}$, the curve $\xi^{p-2}$ is $C^1$ and  the tangent space $T_{x^{p-2}}\xi^{p-2}$ is generated by a vector 
	$\Phi\in\Hom(x^{p-2},z^{q+2})$ {with} $\im(\Phi)<x^{p-1}$ (cfr. Proposition~\ref{prop.1-hyperconvex for property Hk}). 
	In turn this means that infinitesimal variations of vectors in $x^{p-2}$ are contained in $x^{p-1}$, and thus it follows that $\Psi$ satisfies $\ker\Psi=x^{p-2}$.
	Since $\ker \Psi = x^{p-2}$, and $x^{p-1}$ is isotropic, $\im\Psi< (x^{p-2})^\bot=x^{q+2}$.
	As a result $\dot\xi^{p-1}(x)\in T_{x^{p-1}}\Is_{p-1}\left(\quotient{x^{q+2}}{x^{p-2}}\right)$.
	
	We conclude by showing that then $\dot\xi^{p-1}(x)$ necessarily belongs to the cone. By definition of the cone, we have $c_{p-1}^+(x)\subset T_{x^{p-1}}\Is_{p-1}\left(\quotient{x^{q+2}}{x^{p-2}}\right)$. To show that $\dot\xi^{p-1}(x)\in c_{p-1}^+(x)$ we can assume, up to the action of $\PO(p,q)$, that $(\xi(x),\xi(y),\xi(z))=(\sX,P \sX,\sZ)$ is a standard $\T$-positive triple. Then consider the projection 
	$$\begin{array}{crcl}
		\pi_\sZ:&\Is_{p-1}^{\tv\sZ}&\to &\Is_1\left(\quotient{\sZ^{q+2}}{\sZ^{p-2}}\right),\\ &Y^{p-1}&\mapsto &[(Y^{p-1}\cap \sZ^{q+2})+\sZ^{p-2} ],
		\end{array}$$ 
	where, as always,
	$$\Is_{p-1}^{\tv\sZ}:=\{Y^{p-1}\in \Is_{p-1}| \dim (Y^{p-1}\cap \sZ^{q+2})=1,\, Y^{p-1}\cap \sZ^{p-2}=\{0\}\}.$$ 
The set $\Is_{p-1}\left( \quotient{\sX^{q+2}}{\sX^{p-2}}\right)$ is contained in the domain of definition of $\pi_\sZ$ and  $\pi_\sZ$ induces a diffeomorphism between $\Is_{p-1}\left( \quotient{\sX^{q+2}}{\sX^{p-2}}\right)$ and $\Is_{1}\left( \quotient{\sZ^{q+2}}{\sZ^{p-2}}\right)$, so that it is enough to check that $$d\pi_\sZ(\dot\xi^{p-1}(x))\in d\pi_\sZ (c_{p-1}^+(\sX)).$$
	
	Observe that $\pi_\sZ\circ \xi =\xi_{z}$ on $\dg\backslash\{z\}$. Moreover  Proposition~\ref{prop.projections are positive} guarantees that  $(\xi_z(x),\xi_z(y),\xi_z(z))$ is a standard $\T$-positive triple in $\Is_1(\quotient{\sZ^{q+2}}{\sZ^{p-2}})$. 
Since $\xi_z$ is $\T$-positive and differentiable at $x$ (because $\xi$ is differentiable at $x$  by assumption), we deduce from Lemma~\ref{l.der1} that $$d\pi_\sZ(\dot\xi^{p-1}(x))\in c_{1}^+([\sX])=c_{1}^+(\xi_z(x)),$$ where $c_{1}^+([\sX])$ is the cone defined by the standard $\T$-positive triple in $\Is_1(\quotient{\sZ^{q+2}}{\sZ^{p-2}})$. 

We claim that $d\pi_\sZ (c_{p-1}^+(\sX))=c_{1}^+(\xi_z(x))$. Indeed 
$$\exp(c_{p-1}^+(\sX))=\{E_{p-1}(\sfs) \cdot \sX|\; \sfs\in c_J(V_J)\},$$
 where $E_{p-1}(\sfs)$ is as in Equation \eqref{e.Ep-1}. Thus 
 $$\pi_\sZ (\exp(c_{p-1}^+(\sX)))=\{E^{\sZ}_{1}(\sfs) \cdot \xi_z(x)| \sfs\in c_J(V_J)\},$$ where $E^{\sZ}_{1}(\sfs)$ is the (only) elementary matrix for the $\T$-positive structure on $\quotient{\sZ^{q+2}}{\sZ^{p-2}}$ (as in Example~\ref{e.3.9}). Since 
 $$\{E^{\sZ}_{1}(\sfs) \cdot \xi_z(x)|\; \sfs\in c_J(V_J)\}=\exp(c_{1}^+([\sX]))=\exp(c_{1}^+(\xi_z(x)))$$
  and $\exp$ is a diffeomorphism, the claim follows. 
  
  Thus, as desired, 
\[
  d\pi_\sZ(\dot\xi^{p-1}(x))\in d\pi_\sZ (c_{p-1}^+(\sX)).
\qedhere \]
\end{proof}

The same analysis can be done for the other boundary maps $\xi^k:\dg\to \Is_k(\R^{p,q})$. In this case, since the $\T$-positive curve has property $H_k$ (Corollary~\ref{prop.Hk for pos}), the curve $\xi^k$ is already $C^1$ with tangent space, at $x$, given by the line (Proposition~\ref{prop.1-hyperconvex for property Hk})
$$\dot x^k\in T_{x^k}[\quotient{x^{k+1}}{x^{k-1}}]\subset T_{x^k}\Is_{k}(\R^{p,q}).$$ 

In this case the  cone $c^+_{k}(x)$ is the ray corresponding to the positive orientation of the circle (compare \cite[Page 31]{Beyrer-Pozzetti}) induced by the choice of the triple. In particular the following holds.
\begin{prop}\label{p.kderivative}%[{\cite[Proposition 8.11]{PSW1}}]
	Let $\r:\G\to \PO(p,q)$ be $\T$-positive Anosov. Then for all $k=1,\ldots,p-2$ and all $x\in\dg$
	$$\dot\xi^{k}(x)\in c^+_{k}(x).$$
\end{prop}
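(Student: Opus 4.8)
Fix $k\in\{1,\ldots,p-2\}$. Since $\r$ is positive it has property $H_k$ by Proposition~\ref{prop.Hk for pos}, hence by Proposition~\ref{prop.1-hyperconvex for property Hk} the curve $\xi^k$ has $C^1$ image and, for every $x\in\dg$, the tangent line $T_{x^k}\xi^k(\dg)$ equals the one--dimensional subspace $T_{x^k}[\quotient{x^{k+1}}{x^{k-1}}]$ of $T_{x^k}\Is_k(\R^{p,q})$. In particular $\dot\xi^k(x)$ is a well-defined non-zero vector of this line, so the only point to settle is that it lies in $c^+_k(x)$ rather than in the opposite ray.

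My plan is to identify the correct ray by a finite--difference computation, exactly as in the proof of Lemma~\ref{l.der1}. Fix a positively oriented triple $(x,y,z)$ in $\dg$; since $\xi$ is a positive map, $(\xi(x),\xi(y),\xi(z))$ is a $\T$--positive triple of flags, and it is this triple with respect to which $c^+_k(x)$ is defined. By $\PO(p,q)$--equivariance of positivity, of the cone $c^+_k$ and of the boundary map, I may assume $(\xi(x),\xi(y),\xi(z))=(\sX,P(\ov v)\sX,\sZ)$ and prove $\dot\xi^k(x)\in c^+_k(\sX)$. Working in an affine chart of $\Is_k(\R^{p,q})$ at $x^k=\sX^k$ determined by $\sZ$, I would express $\dot\xi^k(x)$ as a limit of difference quotients $\tfrac{1}{t(w)-t(x)}\bigl(\xi^k(w)-\xi^k(x)\bigr)$, with $t$ a positively oriented parameter on $\dg$ and $w\to x$ inside the interval $(x,y)_z$; then $(x,w,y,z)$ is positively oriented, so its image under $\xi$ is a $\T$--positive quadruple of flags. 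I would check that positivity of this configuration forces the increment $\xi^k(w)-\xi^k(x)$ of the $k$--th subspaces to have positive component along $c^+_k(\sX)$, just as for $p=2$ the partial increments of a positive curve land in $c_J(V_J)$ (cfr. Example~\ref{e.3.9} and the proof of Lemma~\ref{l.der1}). Letting $w\to x$, the difference quotients converge to $\dot\xi^k(x)$, which then lies in the closure of $c^+_k(\sX)$, hence in $c^+_k(\sX)$ itself since it is non-zero.

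The step I expect to be most delicate is this chart computation: one has to unwind the definition of the positive semigroup $U_\T^{>0}$ as the ordered product of the elementary matrices $E_j(s)$ (and of $a(v),b(v)$) entering $P(\ov v)$, and follow their upper--triangular action on the $k$--plane $\sX^k$, in order to see that in the chart the increment of $\xi^k$ in the positive direction has positive $c^+_k(\sX)$--component. This is a direct matrix computation, in the spirit of Lusztig positivity, and is essentially the content of \cite[Proposition 8.11]{PSW1}. A minor additional point is to keep the orientation of $\dg$ coming from the $\r$--equivariant positive map consistent with the one induced by the chosen positive triple in its image: this is immediate from the fact that $\T$--positive triples form a union of connected components of the transverse locus (Theorem~\ref{t.ccomp}), so the relevant sign is locally constant.
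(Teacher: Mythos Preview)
Your proposal is correct and follows essentially the same route as the paper's treatment: both invoke property $H_k$ (Proposition~\ref{prop.Hk for pos}) together with Proposition~\ref{prop.1-hyperconvex for property Hk} to pin down the tangent line $T_{x^k}[\quotient{x^{k+1}}{x^{k-1}}]$, and then identify the correct ray $c_k^+(x)$ via the positive orientation induced by the positive triple, ultimately deferring the verification to \cite[Proposition~8.11]{PSW1}. The paper does not spell out the finite--difference argument you sketch; it simply records the result with the citation, so your write-up is in fact more detailed than the paper's own.
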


\begin{remark}[Interpretations of the tangent cones in terms of positivity]\label{r.derTheta}
	A more Lie theoretic way of defining the cones $c_k^+(x)$ is as follows. Given a Lie group $\sf G$, a parabolic subgroup $\sf P_\theta$ which is conjugated to its opposite and a transverse pair $x,z\in \quotient{\sf G}{\sf P_\theta}$, the tangent space $T_x  \quotient{\sf G}{\sf P_\theta}$ is canonically identified with the Lie algebra $\frak n_{\theta}$ of the unipotent radical $\sf U_{\theta}$ of the stabilizer in $\sf G$ of the point $z$.
	
	If now the group $\sf G$ admits a $\Theta$-positive structure, then, by definition,  there are  $\sf L_\Theta^0$-invariant convex cones $c_\beta\subset\frak u_\beta$, where $\frak u_\beta\subset\frak n_\Theta$ is the sum of all the root spaces $\frak g_\alpha$ that are equal to $\beta$ modulo the span of $\Delta\setminus \T$. In particular, for every $\beta\in\Theta$,  $\frak u_\beta\subset \frak n_{\{\beta\}}=T_x\quotient{\sf G}{\sf P_{\{\beta\}}}$, and we have $c_k^+=c_{\alpha_k}$.
\end{remark}

\subsection{The derivative of the cross ratio}\label{s.5.2}
We can now compute the variation of the cross ratio along paths tangent to the cones $c^+_i$ introduced in the previous section, thus proving Proposition~\ref{prop.derivative cross ratio for positive triple: general}. We begin with an explicit computation in the case $p=2$.

\begin{lem}\label{lem.derivative cross ratio for positive triple p=2}
	Let $(x,y,z)\in \Is_1^3(\R^{2,q})$ be a $\T$-positive triple and $\Phi\in  c_1^+(x)\subset T_{x} \Is_1(\R^{2,q})$. Then
	$$  d_{x} \gcr_{1}(z,x,\cdot,y)(\Phi)> 0 .$$
\end{lem}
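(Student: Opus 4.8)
The plan is to reduce by invariance to an explicit computation for a standard triple, and then to conclude via Lemma~\ref{lem.pre.positiv bJ}. Since $(x,y,z)$ is a positive triple, by definition there are $g\in\PO(2,q)$ and a positive element $P=E_1(\sfs_P)$ with $\sfs_P\in c_J(V_J)$ (cf. Example~\ref{e.3.9}) such that $(gx,gy,gz)=(\sX,P\sX,\sZ)$; we use this same $g$ in the definition of $f_x$, so that $c_1^+(x)=f_x^{-1}(\ov c_J(V_J))$ by \eqref{e.cone} and $f_x=f_{\sX}\circ g_*$. For $w$ in a small neighborhood of $x$ in $\Is_1(\R^{2,q})$ all pairings occurring in the cross ratio are defined, since transversality is an open condition, and by Proposition~\ref{prop.property of grassmannian cro}(5) we have $\cro_1(z,x,w,y)=\cro_1(\sZ,\sX,gw,P\sX)$. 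Hence it suffices to prove $d_{\sX}\cro_1(\sZ,\sX,\cdot,P\sX)(\Psi)>0$ for $\Psi:=g_*\Phi\in f_{\sX}^{-1}(\ov c_J(V_J))$.

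Next I would compute the cross ratio in the affine chart of isotropic lines transverse to $\sZ=\langle e_1\rangle$, namely $\sfs\mapsto\langle(q_J(\sfs),\sfs,1)\rangle$, in which $\sX$ is the point $\sfs=0$ and $P\sX$ is the point $\sfs_y:=J\sfs_P$, noting $q_J(\sfs_y)=q_J(\sfs_P)$. A short computation with the matrix $Q$ of \eqref{eq.the from Q}, using the expression of $\cro_1$ in terms of $Q$, gives for $w=\langle(q_J(\sfs),\sfs,1)\rangle$ near $\sX$
\[\cro_1(\sZ,\sX,w,P\sX)=\frac{q_J(\sfs_y)}{q_J(\sfs_y-\sfs)},\]
since the factor $Q(e_1,\tilde w)/Q(e_1,e_{q+2})$ equals $1$, while $Q(\widetilde{P\sX},e_{q+2})=-q_J(\sfs_y)$ and $Q(\widetilde{P\sX},\tilde w)=-q_J(\sfs_y-\sfs)$. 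To read off $\Psi$ in this chart, I would unwind the identification $f_{\sX}$: the unipotent Lie-algebra element with parameter $\sfw$ moves $\langle e_{q+2}\rangle$ in the direction $(0,J\sfw,0)$, so $\Psi=f_{\sX}^{-1}(\sfw)$ with $\sfw\in\ov c_J(V_J)$ corresponds to $\dot\sfs=J\sfw$.

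Differentiating the displayed identity at $\sfs=0$ in the direction $\dot\sfs=J\sfw$ yields
\[d_{\sX}\cro_1(\sZ,\sX,\cdot,P\sX)(\Psi)=\frac{2\,b_J(\sfs_y,J\sfw)}{q_J(\sfs_y)}=\frac{2\,b_J(\sfs_P,\sfw)}{q_J(\sfs_P)},\]
using $J^t=J$, $J^2=\Id$ and $\sfs_y=J\sfs_P$. Now $q_J(\sfs_P)>0$ because $\sfs_P\in c_J(V_J)$, and $b_J(\sfs_P,\sfw)>0$ by Lemma~\ref{lem.pre.positiv bJ} since $\sfs_P\in c_J(V_J)$ and $\sfw\in\ov c_J(V_J)$; hence the derivative is strictly positive, which is the claim.

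The main obstacle is purely bookkeeping: one must be careful about the identification $f_x$ of $T_x\Is_1(\R^{2,q})$ with $V_J$ — in particular the $J$--twist that relates the Lie-algebra model of the tangent space to the affine chart transverse to $\sZ$ — and check that it is compatible with the chosen normalizing element $g$. Once this is set straight, the cross-ratio identity is a one-line bilinear-form computation and the sign is immediate from Lemma~\ref{lem.pre.positiv bJ}; moreover, since $J$ preserves $c_J(V_J)$, the positivity of the answer is robust to the precise form of this identification.
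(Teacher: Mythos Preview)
Your proof is correct and follows essentially the same approach as the paper: normalize via $\PO(2,q)$ to the standard triple, compute $\cro_1$ explicitly in the affine chart transverse to $\sZ$, differentiate, and conclude positivity via Lemma~\ref{lem.pre.positiv bJ}. The only difference is cosmetic: the paper parametrizes the curve directly as $x_t=\langle(t^2q_J(\sfw),t\sfw,1)\rangle$ with $\sfw\in\ov c_J(V_J)$ and differentiates in $t$, whereas you keep track of the $J$-twist between the Lie-algebra model and the affine chart; since $J$ preserves $c_J(V_J)$ this bookkeeping is harmless, as you yourself note.
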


\begin{proof}
	Up to the action of an element in $\PO(2,q)$ %(recall that the cross ratio is invariant under the action of $\OO(2,q)$) 
	we can assume that $z=e_1$, $x=e_d$ and there exist  $\sfy\in c_J(V_J)$,  $\sfw\in \ov c_J(V_J)$ such that 
	\begin{align*}
		&y=\<\bpm q_J(\sfy)\\ \sfy \\ 1\epm\>,\quad  \left.\Phi=\frac{d}{dt}\right|_{t=0} x_t \quad \text{for    } x_t:=\<\bpm t^2 q_J(\sfw)\\ t\sfw,\\ 1\epm\>.
	\end{align*}
	As in the proof of Proposition~\ref{p.poscr1}, we have
	$$\gcr_{1}(z,x,x_t,y)= \frac{-q_J(\sfy)}{-t^2q_J(\sfw)+2tb_J(\sfw,\sfy)-q_J(\sfy)},$$
	and therefore
	$$d_{x} \gcr_{1}(z,x,\cdot,y)(\Phi)= \left.\frac{d}{dt}\right|_{t=0} \gcr_{1}(z,x,x_t,y)= \frac{2b_J(\sfw,\sfy)}{q_J(\sfy)}>0$$
	by Lemma~\ref{lem.pre.positiv bJ}.
\end{proof}

For $k=p-1$ we   reduce the general case to Lemma~\ref{lem.derivative cross ratio for positive triple p=2} using Proposition~\ref{prop.projections are positive}.
\begin{prop}\label{prop.derivative cross ratio for positive triple: general}
	Let $(x,y,z)\in\calF_{p-1}^{3}$ be a $\T$-positive triple. Let $\Phi_k\in c^+_k(x) \subset T_{x^k} \Is_k(\R^{p,q})$ for $k=1,\ldots,p-1$. % and $c_k^+(x)$ the cones described above.
	Then
	$$  d_{x^k} \gcr_{k}(z^{k},x^{k},\cdot,y^{k})(\Phi_k)> 0.$$
\end{prop}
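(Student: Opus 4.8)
The plan is to reduce everything to the case $p=2$, where the computation was already carried out in Lemma \ref{lem.derivative cross ratio for positive triple p=2}. The key structural input is the projection formula of Proposition \ref{prop.projection of cross ratio}, which rewrites $\cro_k$ of a configuration with $\dim V_1\cap V_2 = k-1$ as a projective cross ratio $\cro_1$ on a suitable quotient line; and the fact that the tangent cones $c^+_k(x)$ were \emph{defined} precisely so that along them the intersection $x^k\cap(\text{moving flag})$ has codimension one, i.e. stays in the stratum where Proposition \ref{prop.projection of cross ratio} applies.

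First I would treat the cases $k \leq p-2$. Here the curve $\xi^k$ has property $H_k$ (Proposition \ref{prop.Hk for pos}), so $c^+_k(x)$ is a ray in the line $T_{x^k}[x^{k+1}/x^{k-1}]$. Take a path $x^k_t$ with $\dot x^k_0 = \Phi_k$ and $x^k_t \in [x^{k+1}/x^{k-1}]$; then by property $H_k$ each $x^k_t$ meets $z^k$ in dimension $k-1$ and we may apply the second identity of Proposition \ref{prop.projection of cross ratio}, working on $\widehat{V}_+ = (x^k_t + z^k)/(x^k_t\cap z^k)$. Actually it is cleaner to hold $z^k$ fixed in the cross ratio and let only the second argument vary; then one checks $V := x^k_t\cap z^k$ is constant equal to $x^{k-1}\cap z^k$ and $V_+ = x^k_t + z^k = x^{k+1}\cap(\cdots)$ is constant, so $\cro_k(z^k,x^k,x^k_t,y^k) = \cro_1^{\widehat V_+}$ of the images, and differentiating reduces to the $p=2$ computation of Lemma \ref{lem.derivative cross ratio for positive triple p=2}, using that the image of $\Phi_k$ in the tangent cone on $\widehat V_+$ lands in $\ov c_J$ by the very definition of $c^+_k(x)$ as the positively-oriented ray.

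For $k = p-1$ the argument is the analogue with the \emph{first} identity of Proposition \ref{prop.projection of cross ratio}: along the cone $c^+_{p-1}(x) = d\phi_x(c^+_1(x^{p-1}))$, a path $x^{p-1}_t$ stays inside $\phi_x(\Is_1(x^{q+2}/x^{p-2}))$, hence contains $x^{p-2}$ and sits in $x^{q+2}$; consequently $x^{p-1}_t \cap z^{p-1}$ has dimension $p-2$ (transversality of the boundary map gives $x^{p-2}\cap z^{p-1} = 0$ would fail — one uses instead the companion intersection property noted after the definition of property $H$), and Proposition \ref{prop.projection of cross ratio} applies with $\widehat{V}_\perp = V^\perp/V$. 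After this reduction, $\cro_{p-1}(z^{p-1},x^{p-1},x^{p-1}_t,y^{p-1})$ becomes exactly $\cro_1$ on $x^{q+2}/x^{p-2}$ of the images under the curve $\xi_x$ of Proposition \ref{prop.projections are positive}, the triple $([z^{p-1}],[x^{p-1}],[y^{p-1}])$ is positive there, and $d\phi_x$ carries $c^+_{p-1}$ to $\ov c_J$ by construction; so once more Lemma \ref{lem.derivative cross ratio for positive triple p=2} finishes the job.

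The main obstacle I anticipate is bookkeeping the identification: one must verify carefully that under the isomorphism $f_x: T_{x^{p-1}}\Is_1(x^{q+2}/x^{p-2}) \to V_J$ (and its analogues for $k<p-1$) the cone $c^+_k(x)$ really maps onto $\ov c_J(V_J)$ with the correct sign, so that the partial increment vectors $\sfw$ appearing in Lemma \ref{lem.derivative cross ratio for positive triple p=2} are genuinely in $\ov c_J(V_J)$ and not its negative; this is where the choice of representatives $(q_J(\sfs),\sfs,1)$ for points transverse to $\langle e_1\rangle$, and the positivity of first and last coordinates guaranteed in Lemma \ref{lem.pre.positiv bJ}, must be tracked through the quotient. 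A secondary point is confirming that in the cross ratio $\cro_k(z^k,x^k,\cdot,y^k)$ only the third slot moves, so the subspaces $V$ and $V_+$ (resp. $V^\perp$) are constant along the path and Proposition \ref{prop.projection of cross ratio} can be applied uniformly in $t$ rather than just at $t=0$; this is immediate once one writes the path explicitly as in Lemma \ref{lem.derivative cross ratio for positive triple p=2}.
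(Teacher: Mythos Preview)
Your overall plan matches the paper's: normalize the triple, choose explicit curves tangent to $c_k^+$, apply Proposition~\ref{prop.projection of cross ratio} to reduce $\cro_k$ to a $\cro_1$, and finish with the low-dimensional computation. But there is a genuine gap in how you invoke Proposition~\ref{prop.projection of cross ratio}.

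That proposition requires $\dim V_1\cap V_2=k-1$, where $V_1,V_2$ are the \emph{first and fourth} slots of the cross ratio. In $\cro_k(z^k,x^k,x_t^k,y^k)$ those are $z^k$ and $y^k$, which for a positive triple are transverse (intersection zero), and your alternative choice $x_t^k\cap z^k$ is likewise zero: $x$ and $z$ are transverse in a positive triple, so $x^k\cap z^k=0$, and this persists for small $t$. Property $H_k$ does not give a $(k-1)$-dimensional intersection with $z^k$; it gives a transversality statement in a quotient. The only pair among $\{z^k,x^k,x_t^k,y^k\}$ meeting in dimension $k-1$ is $(x^k,x_t^k)$, by construction of the curve. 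The paper exploits this via the symmetry $\cro_k(z^k,x^k,x_t^k,y^k)=\cro_k(x^k,z^k,y^k,x_t^k)$ (Proposition~\ref{prop.property of grassmannian cro}(2)), so that $V=x^k\cap x_t^k=x^{k-1}$ and $V_+=x^{k+1}$ (resp.\ $V^\perp=x^{q+2}$ when $k=p-1$) are constant in $t$, exactly the constancy you wanted but could not obtain from your pair.

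A second, smaller issue: for $k\leq p-2$ the quotient $\widehat V_+=x^{k+1}/x^{k-1}$ is two-dimensional, so $\cro_1^{\widehat V_+}$ is the ordinary projective cross ratio on $\R\P^1$, not the $\Is_1(\R^{2,m})$ cross ratio of Lemma~\ref{lem.derivative cross ratio for positive triple p=2}. There is no cone $\ov c_J$ here; $c_k^+(x)$ is just a half-line, and positivity of the derivative follows from the elementary monotonicity of the $\R\P^1$ cross ratio (the paper cites \cite[Lemma~3.2]{Beyrer-Pozzetti}). Only the case $k=p-1$, where $\widehat V_\perp=x^{q+2}/x^{p-2}$ carries a form of signature $(2,q-p+2)$, genuinely reduces to Lemma~\ref{lem.derivative cross ratio for positive triple p=2}.
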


\begin{proof} 
	As the cross ratio is invariant under the action of $\PO(p,q)$, we assume without loss of generality that  $x= \sX$,  $z=\sZ$, and $y=P \sX$ for some $P\in U^{>0}_{\T}$. For $k=1,\ldots,p-2$, and $t\in\R$ we consider the  element of $\Is_k(\R^{p,q})$
	$$x_t^k:= x^{k-1}\oplus \langle e_{p+q-k+1}+te_{p+q-k}\>,$$
	while for $k=p-1$ we consider the element of $\Is_{p-1}(\R^{p,q})$
	$$x_t^{p-1}:= x^{p-2}\oplus \<e_{q+2}+t{\sfw}\>,$$
	for some $\sfw\in\ov c_J(V_J)$ (recall that $V_J<\R^{p,q}$).

	By construction, for $1\leq k\leq p-1$, the map $t\mapsto x_t^k$ gives a curve in $\Is_k(\R^{p,q})$ whose derivative  lies in the cone $c_k^+$, and for every $\Phi_k\in c_k^+$ we can find such a  curve tangent to $\Phi_k$.

	Thus it is enough to check
	$$  \left.\frac{d}{dt}\right|_{t=0} \gcr_{k}(z^{k},x^{k},x_t^k,y^{k})> 0.$$
	Let $X_k:=\quotient{x^{k+1}}{ x^{k-1}}$ for $k\leq p-2$ and $X_{p-1}:=\quotient{x^{q+2}}{ x^{p-2}}$. We denote by $[\cdot]$ the obvious projection to $\P(X_k)$ (resp. $\Is_1(X_{p-1})$), so that
	$$[y]:=\left\{
	\begin{array}{lcl}
		[y^{p+q-k}\cap x^{k+1}]&&k\leq p-2\\
		
		[y^{p-1}\cap x^{q+2}]&&k=p-1
		\end{array}
		\right.$$
	 %$[y]:=[y^{p+q-k}\cap x^{k+1}]$ if $k=1,\ldots,p-2$, and $[y]:=[y^{p-1}\cap x^{q+2}]$ if $k=p-1$; 
	 and the same for $[z]$.
	 Then we have 
	 $$\begin{array}{rl}  \gcr_{k}(z^{k},x^{k},x_t^k,y^{k})&= \gcr_{k}(x^{k},z^{k},y^{k},x_t^k)\\
	 	&=	\gcr_{1}^{X_k}([x],[z],[y],[x_t])\\
	 	&=	\gcr_{1}^{X_k}([z],[x],[x_t],[y]),
	 	\end{array}$$
 	where the first and last equality follow from Proposition~\ref{prop.property of grassmannian cro}~(2), and the second is a consequence of the second statement in Proposition~\ref{prop.projection of cross ratio}.

	For $k\leq p-2$, property $H_k$ (Corollary~\ref{prop.Hk for pos}) guarantees that $[z],[x],[y]$ are distinct (cfr. Equation \eqref{e.Hq+p-k}); by definition $[\Phi_k]=\left.\frac{d}{dt}\right|_{t=0} [x_t]\neq 0\in T_{[x]}X_k$ is  directed towards $[y]$, i.e. $[z],[x],[x_t],[y]$ are cyclically ordered on $\P(X_k)\simeq \R\P^1$. As $\gcr_{1}^{X_k}$ is the usual projective cross ratio on $\P(X_k)$, this is enough to guarantee the claim (compare \cite[Lemma 3.2]{Beyrer-Pozzetti}).
	
	In the case $k=p-1$, Lemma~\ref{lem.derivative cross ratio for positive triple p=2} guarantees that
	$$ \left.\frac{d}{dt}\right|_{t=0} \mkern-20mu\gcr_{1}^{X_{p-1}}([z],[x],[x_t],[y])>0,$$
	which proves the claim.
\end{proof}

\begin{thm}\label{thm.pos ratioed}
	Let $\r:\G\to\PO(p,q)$ be $\T$-positive Anosov. Then $\r$ is $k$--positively ratioed for $k=1,\ldots ,p-1$.
\end{thm}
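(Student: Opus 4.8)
The plan is to deduce $k$--positive ratioedness from the infinitesimal positivity statement of Proposition \ref{prop.derivative cross ratio for positive triple: general} together with the regularity of the boundary curves (Theorem \ref{t.Lipschitz}) via Rademacher's theorem. Fix $k\in\{1,\ldots,p-1\}$ and a cyclically ordered quadruple $(x,y,z,w)$ in $\dg$; we must show $\cro_k(x^k,y^k,z^k,w^k)\geq 1$. The idea is to connect $z$ to $y$ by moving a point $t$ along the arc of $\dg$ from $z$ to $y$ not containing $x$ and $w$, and study the function $t\mapsto \cro_k(z^k,x^k,t^k,y^k)$ — here I use properties (1)--(4) of Proposition \ref{prop.property of grassmannian cro} to rewrite $\cro_k(x^k,y^k,z^k,w^k)$ in a form amenable to this one-variable analysis; concretely, using the cocycle identities one reduces the positivity of the given four-point cross ratio to showing that $\cro_k(z^k,x^k,\cdot,y^k)$ is monotone along this arc in the appropriate direction, and that at the endpoint $t=z$ the value is $1$ (by part (1), $\cro_k(z^k,x^k,z^k,y^k)=1$).

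For the monotonicity I would endow $\dg$ with the Lipschitz structure coming from $\xi^k$ as in the paragraph preceding Subsection \ref{s.5.2}: by Theorem \ref{t.Lipschitz} the curve $\xi^k$ has Lipschitz (indeed $C^1$ for $k\leq p-2$) image, so $t\mapsto \xi^k(t)$ is differentiable for almost every $t$, and the composite $t\mapsto \cro_k(z^k,x^k,t^k,y^k)$ is locally Lipschitz, hence absolutely continuous on the arc. At every point $t$ of differentiability, the triple $(t,?,?)$ — more precisely one uses that $(\text{second point},t,\text{third point})$ with an appropriate ordering forms a $\T$--positive triple because $\r$ is $\T$--positive Anosov and the four points are cyclically ordered (invoking that the boundary map sends positively oriented tuples to positive tuples, and Corollary \ref{c.permutation} to arrange the cyclic order into the normal form needed) — is $\T$--positive, so by Proposition \ref{p.p-1derivative} (for $k=p-1$) resp. Proposition \ref{p.kderivative} (for $k\leq p-2$) we have $\dot\xi^k(t)\in c^+_k(t)$, and then Proposition \ref{prop.derivative cross ratio for positive triple: general} gives $\frac{d}{dt}\cro_k(z^k,x^k,t^k,y^k)>0$ for a.e.\ $t$. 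Integrating the derivative (valid since the function is absolutely continuous and its a.e.\ derivative is nonnegative) along the arc from $z$ to $y$ yields $\cro_k(z^k,x^k,y^k,y^k)\geq \cro_k(z^k,x^k,z^k,y^k)=1$; a final application of the cocycle identities and part (1) of Proposition \ref{prop.property of grassmannian cro} translates this into $\cro_k(x^k,y^k,z^k,w^k)\geq 1$, as desired. (Strictness is then automatic by \cite{MZ}, or can be seen directly since the derivative is strictly positive on a set of positive measure.)

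The main obstacle I anticipate is bookkeeping around the change of variables in the cross ratio: one must carefully use the identities in Proposition \ref{prop.property of grassmannian cro} to reduce the four-point cross ratio $\cro_k(x^k,y^k,z^k,w^k)$ to the one-parameter family $\cro_k(z^k,x^k,\cdot,y^k)$ evaluated at the right endpoints, while making sure that for the moving point $t$ ranging over the correct arc the relevant triple is genuinely $\T$--positive (so that the cone statements apply with the correct sign), and that all cross ratios involved stay in the domain $\calA_k$ — this last point follows from transversality of the Anosov boundary maps. A secondary subtlety, only relevant for $k=p-1$, is checking the hypotheses of Proposition \ref{p.p-1derivative}: one needs that $\r$ has property $H_{p-2}$ (which holds by Proposition \ref{prop.Hk for pos} since $\r$ is $\T$--positive) and that the auxiliary curve $\xi_x$ is positive (which is Proposition \ref{prop.projections are positive}); once these are in place the a.e.\ differentiability furnished by Rademacher plus Theorem \ref{t.Lipschitz} does the rest. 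No genuinely new ideas beyond those already assembled in this section should be needed; the theorem is essentially the integrated form of Proposition \ref{prop.derivative cross ratio for positive triple: general}.
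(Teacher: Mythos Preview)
Your strategy is exactly the paper's: invoke Theorem~\ref{t.Lipschitz} for Lipschitz regularity, Rademacher for a.e.\ differentiability, Propositions~\ref{p.p-1derivative} and~\ref{p.kderivative} for cone membership of the derivative, and then integrate the pointwise positivity of Proposition~\ref{prop.derivative cross ratio for positive triple: general} along the curve.

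One concrete correction to the bookkeeping you yourself flagged as risky: the family $t\mapsto\cro_k(z^k,x^k,t^k,y^k)$ you wrote down does not involve $w$ at all, so no cocycle manipulation can recover $\cro_k(x^k,y^k,z^k,w^k)$ from it, and your claimed endpoint value $\cro_k(z^k,x^k,z^k,y^k)=1$ is not even in $\calA_k$ (since $z^k\ntv z^k$; extending the formula would give $0$, not $1$). The correct one-parameter family is $t\mapsto\cro_k(x^k,y^k,t^k,w^k)$ with $t$ running from $y$ to $z$ along the arc avoiding $x$ and $w$: at $t=y$ the value is $\cro_k(x^k,y^k,y^k,w^k)=1$ by part~(1), and at each differentiability point $t_0$ the cocycle identity reduces the derivative to $d_{t_0^k}\cro_k(x^k,t_0^k,\cdot,w^k)(\dot\xi^k(t_0))$, to which Proposition~\ref{prop.derivative cross ratio for positive triple: general} applies with the positively oriented triple $(t_0,w,x)$. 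With this fix your argument is complete and coincides with the paper's.
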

\begin{proof}
Let $(x,y,z,w)$ be cyclically ordered on $\di\G$. Theorem~\ref{t.Lipschitz}  gives that, for all $1\leq k\leq p-1$, the curve $u\mapsto \xi^{k}(u)$ has rectifiable image. Thus, for each $k$, we can parametrize the interval $(y,z)\subset \di \G$ not containing $x,w$  by a map $t\mapsto y_t$ such that $t\mapsto \xi^k(y_t)$ is Lipschitz and $y_0=y$, $y_1=z$. Then also $f:[0,1]\to\R$, defined by $f(t):= \cro_k(x^k,y^k,y^k_t,w^k)$ is Lipschitz and thus by Rademacher's theorem and the fundamental theorem of calculus we get that 
\begin{align*}
\cro_k(x^k,y^k,z^k,w^k)&=f(1)=f(0)+\int_0^1 f'(s)ds\\
&= \cro_k(x^k,y^k,y^k,w^k)+ \int_0^1 \left.\frac{d}{dt}\right|_{t=s}\mkern-20mu\cro_k(x^k,y^k,y^k_t,w^k)ds
\end{align*}
We have $\cro_k(x^k,y^k,y^k,w^k)=1$. Using the cocycle identity, we deduce that (almost everywhere)
\begin{equation}\label{e.crpos}
	\left.\frac{d}{dt}\right|_{t=t_0}\mkern-20mu\cro_k(x^k,y^k,y^k_t,w^k)=\cro_k(x^k,y^k,y^k_{t_0},w^k)\left.\frac{d}{dt}\right|_{t=t_0}\mkern-20mu\cro_k(x^k,y^k_{t_0},y^k_t,w^k).\end{equation}
From Proposition~\ref{prop.derivative cross ratio for positive triple: general}, whose assumptions are satisfied thanks to Proposition~\ref{p.p-1derivative} (resp. Proposition~\ref{p.kderivative}), it follows that (almost everywhere) 
$$\left.\frac{d}{dt}\right|_{t=t_0}\mkern-20mu\cro_k(x^k,y^k_{t_0},y^k_t,w^k)>0.$$
For every $t_0$ the coefficient $\cro_k(x^k,y^k,y^k_{t_0},w^k)$ in Equation \eqref{e.crpos} is positive  since the 4-tuple $(x^k,y^k,y^k_{t_0},w^k)$ is in the same connected component of the domain of definition of $\cro_k$ as the 4-tuple $(x^k,y^k,y^k,w^k)$ for which $\cro_k(x^k,y^k,y^k,w^k)=1$ and the cross ratio is continuous and doesn't vanish (recall Definition~\ref{d.cr}). Hence $f'(t)>0$ almost everywhere and therefore $\cro_k(x^k,y^k,z^k,w^k)>1$, as desired.
%Thanks to Rademacher's theorem the fundamental theorem of calculus applies, and it is enough to show that the variation of the cross ratio is positive. This is guaranteed by Proposition~\ref{prop.derivative cross ratio for positive triple: general} whose assumptions are satisfied thanks to Proposition~\ref{p.p-1derivative} (resp. Proposition~\ref{p.kderivative}).
\end{proof}

%%%%%%%%%%%%%%%

\section{Collar lemmas for $\Theta$-positive representations}
The goal of the section is to prove the collar lemma, stated as Theorem~\ref{thm.intro-collar} in the introduction. As a warm up, we give  in Section~\ref{s.c1} a complete proof in the case $p=2$, which builds on the  technical Lemma~\ref{p.collar2,n}, a  key ingredient also in the proof of the general case.  In Section~\ref{s.c2} we introduce the notion of hybrid flags, and discuss the general strategy of proof, which is carried out in the remaining three subsections. 
\subsection{Collar lemma for $\PO(2,q)$-positive representations}\label{s.c1}
In this subsection we focus on the case $p=2$; in this case the group $\PO(2,q)$ is of Hermitian type, and $\T$-positive Anosov representation $\r:\G\to\PO(2,q)$ are nothing but maximal representations as in \cite{BIW, CTT}. Our aim is to give a self contained proof of the collar lemma in this case, 
 which is considerably simpler, but already illustrates the topological input needed for the general proof, and sheds some light to the general strategy. Recall from the introduction that we say that two elements $g,h\in\G$ are \emph{linked} if the attracting and repelling fixed points $h_{+},h_{-}\in \dg$ of $h$ are in different connected components of $\dg\backslash\{g_-,g_+\}$. Theorem~\ref{thm.intro-collar} in this context restates as

\begin{thm}\label{t.collarmax}
		Let $\r:\G\to\PO(2,q)$ be  $\T$-positive Anosov  and $g,h\in\G$ be a linked pair. Then 
	\begin{align*}
		\left(1-\left|\frac{\lambda_{2}}{\lambda_{1}}(\r(h))\right|\right)^{-1}<\lambda_1^2 (\r(g)).
	\end{align*}
\end{thm}	
\begin{remark}
	A (slightly weaker) collar lemma for maximal representations $\rho:\G\to\Sp(2n,\R)$ was proven in \cite{BP}, however  \cite{BP} doesn't cover maximal representations in Hermitian Lie groups different from $\Sp(2n,\R)$, and our techniques of proof  are different. Our techniques of proof is also different from the technique of proof of the  collar lemma for Hitchin representations proven in \cite{Lee-Zhang}, albeit all the three proofs build on the topological input from hyperbolic geometry giving the orientation of a well chosen 6-tuple of fixed points of hyperbolic elements recalled in Figure~\ref{f.collar} below.
\end{remark}	
An important advantage of  the projective cross ratio $\cro_{\R\P^1}$ with respect to the cross ratios $\cro_k$, which is at the basis of an easy proof of the collar lemma for holonomies of hyperbolizations, is its additional symmetry:
$$\cro_{\R\P^1}(d,a,b,c)=\left(1- \cro_{\R\P^1}(a,b,c,d)^{-1}\right)^{-1}.$$
While this doesn't always hold for $\cro_1$, we establish the following generalization, which has the advantage of involving the exponential of the root (recall from Lemma~\ref{lem.cro-period} that  $\cro_1\left(h_+,hx,x,h_-\right)=\lambda_1(h)^2$).
\begin{lem}\label{p.collar2,n}
	Let $h\in\PO(2,q)$ be such that $|\lambda_1(h)|>|\lambda_2(h)|$. Denote by $h_\pm\in\Is_1(\R^{2,q})$ the eigenlines corresponding to the eigenvalues of highest and lowest absolute value. Then for every $x\in\Is_1(\R^{2,q})$  such that $(h_+,hx,x,h_-)$ is positive, 
	$$\left(1-\left|\frac{\lambda_2}{\lambda_1}(h)\right|\right)^{-1}< \cro_1\left(h_-,h_+,hx,x\right).$$
\end{lem}

\begin{proof}
	Since the 4-tuple $(h_+,hx,x,h_-)$ is positive, in particular the triple $(h_+,x,h_-)$ is positive. Thus we can assume without loss of generality that, with respect to the standard basis, $h_+=e_{q+2}$, $h_-=e_{1}$  and we can write, as in Example~\ref{e.3.9}, 
	$$x=[q_J(\sfs_x):\sfs_x:1]$$
	for some vector $\sfs_x\in c_J(V_J)$.
	
	{{We choose  the lift of $h$ to $\OO(2,q)$, which we  also denote by $h$, such that $\lambda_1:=\lambda_1(h)>0$.}}
	Since $h_+=e_{q+2}$ and $h_-=e_{1}$, the element $h$ acts on the subspace $V_J=\langle e_1,e_{q+2}\rangle^\perp$. We denote by $h_0:V_J\to V_J$ the induced linear map, {{which preserves the form $b_J$}}. Then we have
	$$\begin{array}{rl}
		hx&=[\lambda_1^{-1} q_J(\sfs_x): h_0 \sfs_x:\lambda_1]\\
		&=[\lambda_1^{-2} q_J(\sfs_x):\lambda_1^{-1} h_0 \sfs_x:1]
	\end{array}.$$
	Since the quadruple $(e_{q+2},hx,x,e_{1})$ is positive, in particular the triple $(e_{q+2},hx,e_{1})$ is positive, and thus $\lambda_1^{-1} h_0 \sfs_x\in c_J(V_J)$.% we deduce from Lemma~\ref{l.fourpos} that $\sfw:=\sfs_x-\lambda_1^{-1} h_0 \sfs_x\in c_J(V_J)$.\mJ{I dont think we need this anymore but just that $h_0\sfs_x\in c_J(V_J)$}
	
	Recall that $\tilde{x}\in \R^{2,q}$ denotes a non-trivial lift of $x\in\Is_1(\R^{2,q})$. We can now explicitly compute\footnote{Recall from Equation \eqref{eq.the from Q} that $Q((a,\sfs,b)^t,(c,\mathsf{t},d)^t)=-ad-bc+2b_J(\sfs,\mathsf{t})$.}
	\begin{align}%{rl}
		\cro_1\left(h_-,h_+,hx,x\right)&=\displaystyle{\frac{Q(\tilde{h}_-,\tilde{hx})Q(\tilde{h}_+,\tilde x)}{Q(\tilde{h}_-,\tilde{h}_+)Q(\tilde{x},\tilde{hx})}}\nonumber\\ 
		&=\displaystyle{\frac{\lambda_1^2 q_J(\sfs_x)}{(\lambda_1^2+1)q_J(\sfs_x)-2b_J(\sfs_x,\lambda_1 h_0 \sfs_x)}}.\label{e.cr1}
		%&=\displaystyle{\frac{\lambda_1^2 q_J(v_x)}{(\lambda_1^2+1)q_J(v_x)-2q_J(v_x)-2b_J(v_x,\lambda_1 h_0 v_x-v_x)}}\nolabel\\
		%&\geq \displaystyle{\frac{\lambda_1^2}{\lambda_1^2-1}}.\\
		%&\geq \displaystyle{\frac{\lambda_1}{2(\lambda_1-1)}}.\\
	\end{align} 
	In order to conclude we need to find a good lower bound on the positive number $2b_J(\sfs_x,\lambda_1 h_0 \sfs_x)$, which is positive by Lemma~\ref{lem.pre.positiv bJ}. Setting $v_x:=\frac{1}{\sqrt{q_J(\sfs_x)}}\sfs_x$, we have that $v_x$ and $h_0 v_x$ belong to $q_J^{-1}(1)$. Therefore $v_x$ and $h_0 v_x$ are in the $-1$ level set of the quadratic from $-q_J$ of signature $(q,1)$. Since, by assumption, $v_x$ and $h_0v_x$ both have positive first entry with respect to the basis $(e_2,\ldots,e_{q+1})$, they are also in the same connected component, denoted by $S^+$, of $-q_J^{-1}(-1)$ , which is preserved by  $h_0$. {This also yields that the eigenvalue of $h_0$ of greatest modulus, which we denote by $\lambda_2$ and which coincides with $\lambda_2(h)$, is positive. }
	
	Observe that $S^+$ is the hyperboloid model of real hyperbolic $(q-1)$-space $\mathbb{H}^{q-1}$. Therefore
	$$\mathrm{arccosh} (b_J(v_x,h_0 v_x))=d_{\mathbb{H}^q}(v_x,h_0 v_x)\geq \ell_{\mathbb{H}^q} (h_0)$$
	where $\ell_{\mathbb{H}^q}$ denotes the translation length of the hyperbolic isometry $h_0$. Basic hyperbolic geometry yields that 
	$$\cosh (\ell_{\mathbb{H}^q} (h_0))=\frac{1}{2}(\lambda_2+\lambda_2^{-1})$$
	and therefore
	$$2b_J(\sfs_x,\lambda_1 h_0 \sfs_x)=2q_J(\sfs_x)\lambda_1(v_x,h_0 v_x)\geq q_J(\sfs_x)\lambda_1(\lambda_2+\lambda_2^{-1}).$$
	As a result, using that $\lambda_2>0$ and thus also $(\lambda_2+\lambda_2^{-1})>0$, we obtain 	\[
	\begin{array}{rl}
		\cro_1\left(h_-,h_+,hx,x\right)&=\displaystyle{\frac{\lambda_1^2 q_J(\sfs_x)}{(\lambda_1^2+1)q_J(\sfs_x)-2b_J(\sfs_x,\lambda_1 h_0 \sfs_x)}}\\
		&\geq\displaystyle{\frac{\lambda_1^2 q_J(\sfs_x)}{(\lambda^2+1)q_J(\sfs_x) -\lambda_1(\lambda_2+\lambda_2^{-1})q_J(\sfs_x)}}\\
		&=\displaystyle{\frac{\lambda_1}{(\lambda_1-\lambda_2)(1-\lambda_1^{-1}\lambda_2^{-1})}}\\ 
		&> \displaystyle{\frac{\lambda_1}{\lambda_1-\lambda_2}} =\frac{1}{1-\frac{ \lambda_2}{\lambda_1}}=\frac{1}{1-|\frac{ \lambda_2}{\lambda_1}|}.
	\end{array} 
	\]
\end{proof}	
The proof of Theorem~\ref{t.collarmax} follows combining Lemma~\ref{p.collar2,n} and the positivity of the cross ratio (Theorem~\ref{thm.pos ratioed}). Given $x\in\dg$ we denote as usual by $x^1\in\Is_1(\R^{2,q})$ the image of the boundary map, and given $g\in\G$ we denote by $g_\rho\in\PO(2,q)$ the element $\rho(g)$. 
\begin{proof}[Proof of Theorem~\ref{t.collarmax}]
		\begin{figure}[h]
			\begin{tikzpicture}[scale=1.2]
				\draw (0,0) circle [radius =1];
				\draw (-1,0) [blue, thick] to (1,0);
				\node at (-1,0) [left] {$g_+$};
				\node at (1,0) [right] {$g_-$};
				\draw (0,-1) [red, thick] to (0,1);
				\node at (0,1) [above] {$h_+$};
				\node at (0,-1) [below] {$h_-$};
				
				\node at (-0.5,0.9) [above] {$hg_+$};
				\filldraw (-0.36,0.92) circle [radius=1pt];
				\draw (0.36,0.92) [blue, thick] to (-0.36,0.92);
				\filldraw (-0.92,0.36) circle [radius=1pt] node [above left] {$gh_+$};
				\draw (-0.92,0.36) [red, thick] to (-0.92,-0.36);
				%	\filldraw (.7,.7) circle [radius=1pt];
				\filldraw (1,0) circle [radius=1pt];
				
				\filldraw (-1,0) circle [radius=1pt];
				\filldraw (0,1) circle [radius=1pt];
				\filldraw (0,-1) circle [radius=1pt];
				%	\filldraw (.7,-.7) circle [radius=1pt];
				
			%	\draw (-1,0) to (0,-1);
			%	\filldraw (-0.6,-0.4) circle [radius=1pt] node [below left]{$h_-\vartriangleleft_k g_+$};
			\end{tikzpicture}
			\caption{The proof of Theorem~\ref{t.collarmax}.}\label{f.collarm}
		\end{figure}
	We know  from Lemma~\ref{p.collar2,n} that 
	$$\left(1-\left|\frac{\lambda_2}{\lambda_1}(h)\right|\right)^{-1}< \cro_1\left(h_-^1,h_+^1,h_\rho g_+^1,g_+^1\right).$$
	Since the points $(h_-, g_-, h_+, hg_+, gh_+, g_+)\in\dg^6$ are cyclically ordered (compare Figure~\ref{f.collarm}, and \cite[Lemma 2.2]{Lee-Zhang}), and $\cro_1$ is positive along the image of the boundary map, we deduce using the cocycle identity (Proposition~\ref{prop.property of grassmannian cro} (3-4))
		\begin{align*}
			\cro_{1}(h^{1}_-,h^{1}_+,h_\rho g^{1}_+,g_+^{1})&<\cro_{1}(g^{1}_-,h^{1}_+,h_\rho g^{1}_+,h_-^{1})\cro_{1}(h^{1}_-,h^{1}_+,h_\rho g^{1}_+,g_+^{1})\\
	&=\cro_{1}(g^{1}_-,h^{1}_+,h_\rho g^{1}_+,g_+^{1})\\
	&<\cro_{1}(g^{1}_-,h^{1}_+,h_\rho g^{1}_+,g_+^{1})\cro_{1}(g^{1}_-,h_\rho g^{1}_+,g_\rho h^{1}_+,g_+^{1})\\
	&=\cro_{1}(g^{1}_-,h^{1}_+,g_\rho h^{1}_+,g_+^{1}).
		\end{align*}
		The theorem follows from Lemma~\ref{lem.cro-period}, stating
		\[\cro_{1}(g^{1}_-,h^{1}_+,g_\rho h^{1}_+,g_+^{1})=\lambda_1^2 (g_{\r}). \qedhere\]
\end{proof}	
\subsection{Hybrid flags and strategy of proof in the general case}\label{s.c2}
For the proof of the collar lemma we will need the following construction.
\begin{defn}\label{def.hybrid flag}
	Given two transverse flags  $x,y\in \calF_{p-1}$, the \emph{$(x,k)$-hybrid flag} is
	$$ x\vartriangleleft_k y:= (x^1,\ldots, x^{k-1}, x^{k-1}\oplus (x^{k+1}\cap y^{p+q-k}),x^{k+1}, \ldots, x^{p-1})$$
	if $k=1,\ldots,p-2$ and  
	$$ x\vartriangleleft_{p-1} y:= (x^1,\ldots, x^{p-2}, x^{p-2}\oplus (x^{q+2}\cap y^{p-1}))$$
	otherwise.
\end{defn}
In the case of the standard flags $\sX,\sZ$ where  $\sZ^k=\<e_1,\ldots,e_{k}\>$ the hybrid flags are given, 
$$\sZ  \vartriangleleft_k \sX=\left\{\begin{array}{ll}
	(\sZ^1,\ldots,\sZ^{k-1},\<e_1,\ldots,e_{k-1},e_{k+1}\>, \sZ^{k+1},\ldots, \sZ^{p-1} ) &\text{ if } k<p-1\\
(\sZ^1,\ldots,\sZ^{p-2}, \<e_1,\ldots,e_{p-2},e_{q+2}\> )&\text{ if } k=p-1.
	\end{array}
	\right.$$
%for $k<p-1$, by 
%$$\sZ  \vartriangleleft_k \sX=(\sZ^1,\ldots,\sZ^{k-1},\<e_1,\ldots,e_{k-1},e_{k+1}\>, \sZ^{k+1},\ldots, \sZ^{p-1} )$$
%where  $\sZ^k=\<e_1,\ldots,e_{k}\>$, and by 
%$$\sZ  \vartriangleleft_{p-1} \sX=(\<e_1\>,\ldots,\<e_1,\ldots,e_{p-2}\>, \<e_1,\ldots,e_{p-2},e_{q+2}\> ).$$

The goal of the section is to prove the collar lemmas; more specifically we want to show that for  any $\T$-positive representation $\r$, any linked pair $g,h\in\G\backslash\{e\}$, and any  $k<p-1$, 
$$		\left(1-\left|\frac{\lambda_{k+1}}{\lambda_{k}}(h_\r)\right|\right)^{-1}<\lambda_1^2 \cdots\lambda_k^2 (g_\r).$$
%where $h_{\r}=\r(h), g_{\r}=\r(g)$.

We will prove the collar lemma  in two steps: 
First we show that 
\begin{align}\label{e.STEP1}
	\left(1-\left| \frac{\lambda_{k+1}}{\lambda_k}(h_{\r})\right|\right)^{-1}<\cro_k\left(h^k_-,h^k_+,gh^k_+,(h_-\vartriangleleft_k g_+)^k\right);
\end{align}
in the case $k=p-1$ we show how to reduce this claim to Lemma~\ref{p.collar2,n} in Section~\ref{S.Step1}, building on the study of some level sets of the cross ratio $\cro_k$ carried out in Proposition~\ref{prop.projection of cross ratio}.
If $k<p-1$  Equation \eqref{e.STEP1} is simpler and directly follows from property $H_{k}$ \cite{Beyrer-Pozzetti}. %In the case of $k=p-1$ we  will reduce the claim to an inequality for the $\PO(2,q)$--positive structure, which we prove in Section~\ref{S.Step1}. 

In the second step we prove that $\cro_k(h^k_-,h^k_+,gh^k_+,g^k_+)$ is bigger than the right hand side. The main ingredients used in this step are a technical statement (Proposition~\ref{prop.k-projection is positive}) about   $k$--hybrid flags, which we prove in Section~\ref{s.hybrid}, and the connection between positivity of triples and positivity of the cross ratio (Proposition~\ref{prop.derivative cross ratio for positive triple: general}). We conclude using that the representations are $k$--positively ratioed and Lemma~\ref{lem.cro-period}.

\begin{remark}
	Note that since the Jordan projections of $g$ and $g^{-1}$ agree,  it is enough to prove the collar lemma only for one of the pairs $(g,h)$ and $(g^{-1},h)$. As a result we can and will assume, possibly replacing $g$ with $g^{-1}$, that the quadruple $(h_-,h_+,gh_+,g_+)$ is positive: 
\end{remark}

\subsection{Step 1}\label{S.Step1}
The first step for $k<p-1$ follows  directly from \cite{Beyrer-Pozzetti}.

\begin{prop}[{\cite[Corollary 6.3]{Beyrer-Pozzetti}}]\label{prop.step 1 small k}
	Let $\r:\G\to\PO(p,q)$ satisfy property $H_{k}$, then for $k<p-1$ and any linked pair $g,h\in\G\backslash\{e\}$ we have
	$$\left(1-\left| \frac{\lambda_{k+1}}{\lambda_{k}}(h_{\r})\right|\right)^{-1}<\cro_{k}\left(h^{k}_-,h^{k}_+,gh^{k}_+,(h_-\vartriangleleft_k g_+)^k\right)$$ 
\end{prop}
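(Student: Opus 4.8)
This is \cite[Corollary 6.3]{Beyrer-Pozzetti}; I would organize the argument as follows. Replacing $g$ by $g^{-1}$ if necessary, assume the quadruple $(h_-,h_+,gh_+,g_+)$ is positively ordered in $\dg$. The plan is to use property $H_k$ to push the whole configuration onto a projective line, where the inequality turns into a collar-lemma estimate for a single loxodromic transformation.

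First I would unwind property $H_k$. It says that for every $(x,y,z)\in\dg^{(3)}$ the sum in \eqref{eq property H} is direct; specializing $x=h_-$ one gets $\dim\bigl(h^k_-\cap(h_-\vartriangleleft_k g_+)^k\bigr)=k-1$ and $h^k_-+(h_-\vartriangleleft_k g_+)^k=h^{k+1}_-$, so Proposition \ref{prop.projection of cross ratio} applies with $V_1=h^k_-$, $V_2=(h_-\vartriangleleft_k g_+)^k$, $V=h^{k-1}_-$ and $V_+=h^{k+1}_-$. Writing $\psi(y):=[\,y^{p+q-k}\cap h^{k+1}_-\,]\in\P(h^{k+1}_-/h^{k-1}_-)\cong\R\P^1$ for $y$ in general position with $h_-$ --- so that $\psi(gh_+)=[gh^{p+q-k}_+\cap h^{k+1}_-]$ and $\psi(g_+)=[(h_-\vartriangleleft_k g_+)^k]$ --- the proposition yields
\begin{align*}
	\cro_k\!\left(h^k_-,h^k_+,gh^k_+,(h_-\vartriangleleft_k g_+)^k\right)=\cro_1^{\R\P^1}\!\left([h^k_-],\,\psi(h_+),\,\psi(gh_+),\,\psi(g_+)\right),
\end{align*}
the right hand side being the usual projective cross ratio, and $[h^k_-]$ a fixed point of the induced action of $h_{\r}$ on $\R\P^1$.

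Next I would identify the dynamics on this $\R\P^1$. The element $h_{\r}$ acts as a loxodromic transformation with repelling fixed point $[h^k_-]$ and attracting fixed point $\psi(h_+)$, and a short eigenvalue computation shows that the absolute value of its multiplier at $\psi(h_+)$ equals $m:=\bigl|\tfrac{\lambda_{k+1}}{\lambda_k}(h_{\r})\bigr|\in(0,1)$; moreover positivity of $\r$ puts $\psi(h_-)=[h^k_-]$, $\psi(h_+)$, $\psi(gh_+)$, $\psi(g_+)$ in cyclic order on $\R\P^1$. The statement then reduces to a one-dimensional inequality relating the positions of $\psi(gh_+)$ and $\psi(g_+)$ to the $h_{\r}$--orbit, which one establishes from the linking of the axes of $g$ and $h$ by comparing $\psi(g_+)$ with the $h_{\r}$--translates of $\psi(gh_+)$; the shape $(1-m)^{-1}=\sum_{n\ge0}m^n$ of the bound is the trace of the geometric series that appears along this orbit, and the strict inequality comes from $g$ and $h$ having disjoint fixed point sets in $\dg$. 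This last, purely one-dimensional estimate is the computation carried out in \cite{Beyrer-Pozzetti}, and I would expect it to be the main obstacle: having the four points merely in cyclic order (which is all that positivity gives) yields only the positively ratioed bound ``$>1$'', and the quantitative improvement to $(1-m)^{-1}$ genuinely uses the linking together with the $h_{\r}$--equivariance of $\psi$.

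Finally, I would note that this reduction breaks down when $k=p-1$: there $\xi^{p-1}$ need only be Lipschitz (Theorem \ref{t.Lipschitz}), not $C^1$, and property $H_{p-1}$ need not hold for positive representations, which is why the case $k=p-1$ is dealt with separately in Sections \ref{S.Step1} and \ref{s.hybrid}.
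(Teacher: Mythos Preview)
The paper does not prove this proposition; it is quoted verbatim from \cite[Corollary 6.3]{Beyrer-Pozzetti}. Your reduction to a projective cross ratio on $\R\P^1\cong\P(h_-^{k+1}/h_-^{k-1})$ via Proposition~\ref{prop.projection of cross ratio} is correct, as is the identification of the induced action of $h_\rho$ and its multiplier. One small slip: you invoke ``positivity of $\rho$'' to get the cyclic order of the four projected points, but the hypothesis is only property $H_k$; it is the $C^1$ structure from Proposition~\ref{prop.1-hyperconvex for property Hk} that makes $\psi$ a local homeomorphism and hence order-preserving.

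The description of the one-dimensional endgame is where your sketch goes off track: the bound $(1-m)^{-1}$ does not arise from a geometric series along the $h_\rho$--orbit. The mechanism is the one you can read off from the proof of Proposition~\ref{prop.step 1 k=p-1}: linking gives that the 5-tuple $(h_-,h_+,hg_+,gh_+,g_+)$ is cyclically ordered in $\dg$, so under $\psi$ the point $\psi(hg_+)$ lies between $\psi(h_+)$ and $\psi(gh_+)$; monotonicity of the projective cross ratio in its third entry then yields
\[
\cro_1^{\R\P^1}\!\big([h_-^k],\psi(h_+),\psi(gh_+),\psi(g_+)\big)
\;>\;
\cro_1^{\R\P^1}\!\big([h_-^k],\psi(h_+),\psi(hg_+),\psi(g_+)\big).
\]
Now $\psi(hg_+)=\bar h\cdot\psi(g_+)$ by $h_\rho$--equivariance of $\psi$, and in affine coordinates with $[h_-^k]=\infty$, $\psi(h_+)=0$, $\psi(g_+)=a$ the right-hand side is exactly $a/(a-\mu a)=(1-\mu)^{-1}$ with $\mu=\lambda_{k+1}/\lambda_k(h_\rho)$; the cyclic order on $\R\P^1$ forces $\mu\in(0,1)$, so this equals $(1-m)^{-1}$. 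That is the whole computation---no series, no iteration---and it is the $\R\P^1$ analogue of what Proposition~\ref{p.collar2,n} does for $\PO(2,m)$.
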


Thus we are left to consider the case $k=p-1$.

\begin{prop}\label{prop.step 1 k=p-1}
	Assume that $\r:\G\to\PO(p,q)$ is $\{p-2,p-1\}$--Anosov and the projections $\xi_x$ to $\quotient{x^{q+2}}{x^{p-2}}$ are $\T$-positive curves for all $x\in \dg$. Then for any linked pair $g,h\in\G\backslash\{e\}$ we have
	$$\left(1-\left|\frac{\lambda_{p}}{\lambda_{p-1}}(h_{\r})\right|\right)^{-1}<
	\cro_{p-1}\left(h^{p-1}_-,h^{p-1}_+,gh^{p-1}_+,(h_-\vartriangleleft_{p-1} g_+)^{p-1}\right).$$ 
\end{prop}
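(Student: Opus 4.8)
The plan is to reduce the inequality to the analogous statement for the $\PO(2,q-p+2)$--positive structure carried by the quotient space $\widehat V_\perp:=\quotient{h_-^{q+2}}{h_-^{p-2}}$, on which $Q$ induces a form of signature $(2,q-p+2)$.

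First I would apply Proposition \ref{prop.projection of cross ratio}. Since $(h_-\vartriangleleft_{p-1}g_+)^{p-1}=h_-^{p-2}\oplus(h_-^{q+2}\cap g_+^{p-1})$, the flags $h_-^{p-1}$ and $(h_-\vartriangleleft_{p-1}g_+)^{p-1}$ both contain $h_-^{p-2}$; using transversality of the boundary map one checks that their intersection is exactly $h_-^{p-2}$, that the hybrid flag is genuinely an isotropic $(p-1)$--plane, and that all four entries lie in the transverse position required by $\calA_{p-1}$. Hence Proposition \ref{prop.projection of cross ratio} applies with $V=h_-^{p-2}$, $V^\perp=h_-^{q+2}$, and yields
\begin{align*}
	\cro_{p-1}\bigl(h^{p-1}_-,h^{p-1}_+,gh^{p-1}_+,(h_-\vartriangleleft_{p-1} g_+)^{p-1}\bigr)
	=\cro_{1}^{\widehat V_\perp}\bigl([h_-^{p-1}],[h_+^{p-1}\cap h_-^{q+2}],[gh_+^{p-1}\cap h_-^{q+2}],[h_-^{q+2}\cap g_+^{p-1}]\bigr).
\end{align*}
By the very definition of the maps $\xi_x$ in Equation \eqref{e.xix} together with $\rho$--equivariance of $\xi$, these four lines in $\widehat V_\perp$ are precisely $\xi_{h_-}(h_-),\ \xi_{h_-}(h_+),\ \xi_{h_-}(gh_+)$ and $\xi_{h_-}(g_+)$.

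Next I would read off the dynamics. As $h_-$ is the repelling fixed flag of $h_\r$, the subspaces $h_-^{p-2}$ and $h_-^{q+2}$ are $h_\r$--invariant, so $h_\r$ descends to $H\in\PO(\widehat V_\perp)\simeq\PO(2,q-p+2)$; its two largest eigenvalue moduli are $|\lambda_{p-1}(h_\r)|$ and $|\lambda_p(h_\r)|$, so $H$ is proximal on $\Is_1(\widehat V_\perp)$ with eigenvalue gap $|\lambda_2/\lambda_1(H)|=|\lambda_p/\lambda_{p-1}(h_\r)|$, and by dynamics preservation its attracting and repelling fixed points are $\xi_{h_-}(h_+)$ and $\xi_{h_-}(h_-)$; moreover $H\cdot\xi_{h_-}(y)=\xi_{h_-}(h\cdot y)$ for $y\neq h_-$. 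Since $\xi_{h_-}$ is a positive curve by hypothesis and $(h_-,h_+,gh_+,g_+)$ is positively oriented in $\dg$ (which we arranged by possibly replacing $g$ by $g^{-1}$), the $4$--tuple $\bigl(\xi_{h_-}(h_-),\xi_{h_-}(h_+),\xi_{h_-}(gh_+),\xi_{h_-}(g_+)\bigr)$ is $\Theta$--positive in $\calF_1(\R^{2,q-p+2})$. This puts us exactly in the setting of Proposition \ref{p.collar2,n}, the collar inequality internal to the $\PO(2,q-p+2)$--positive structure, whose conclusion is the asserted estimate.

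The main work, and the main obstacle, is therefore Proposition \ref{p.collar2,n} itself: a collar inequality for a positive curve in $\calF_1(\R^{2,q'})$ and a proximal element whose fixed points lie on it. I would prove it by reduction to normal form: using $\PO(2,q')$--invariance of the cross ratio one may normalize so that the repelling and attracting fixed points of $H$ are $\langle e_1\rangle$ and $\langle e_{q'+2}\rangle$ and describe the positive $4$--tuple via the explicit unipotents of Example \ref{e.3.9} and Lemma \ref{l.fourpos}; the inequality then becomes an explicit estimate for the form $q_J$ in which the shape $(1-t)^{-1}$ arises from summing the contributions of the $H$--orbit of the relevant point — successive terms having ratio controlled by $t=|\lambda_2/\lambda_1(H)|$ — and the positivity is supplied by Lemma \ref{lem.pre.positiv bJ}. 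Making the passage to the orbit sum precise, and tracking the one degenerate configuration in which the inequality is only claimed non-strict, are the points I expect to require the most care.
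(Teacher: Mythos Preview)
Your reduction to the quotient $\widehat V_\perp=h_-^{q+2}/h_-^{p-2}$ via Proposition~\ref{prop.projection of cross ratio}, the identification of the four lines with $\xi_{h_-}(h_-),\xi_{h_-}(h_+),\xi_{h_-}(gh_+),\xi_{h_-}(g_+)$, and the passage to the induced element $H\in\PO(2,q-p+2)$ are all exactly as in the paper. The gap is in the sentence ``This puts us exactly in the setting of Proposition~\ref{p.collar2,n}.'' It does not. Proposition~\ref{p.collar2,n} bounds $\cro_1(H_-,H_+,Hx,x)$ for a point $x$ with $(H_+,Hx,x,H_-)$ positive; to apply it with fourth entry $x=\xi_{h_-}(g_+)$ you would need the third entry to be $H\cdot\xi_{h_-}(g_+)=\xi_{h_-}(hg_+)$, not $\xi_{h_-}(gh_+)$. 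These are different points: $hg_+$ and $gh_+$ do not coincide (indeed in $\dg$ the five points $(h_-,h_+,hg_+,gh_+,g_+)$ are cyclically ordered, as in \cite[Lemma~2.2]{Lee-Zhang}). So Proposition~\ref{p.collar2,n} only gives you
\[
\left(1-\left|\tfrac{\lambda_p}{\lambda_{p-1}}(h_\r)\right|\right)^{-1}\leq \cro_1^{\widehat V_\perp}\bigl(\xi_{h_-}(h_-),\xi_{h_-}(h_+),\xi_{h_-}(hg_+),\xi_{h_-}(g_+)\bigr),
\]
and you still need to pass from $hg_+$ to $gh_+$ in the third slot. The paper does this with one extra line: since $\xi_{h_-}$ is positive and $(h_-,h_+,hg_+,gh_+,g_+)$ is positively oriented, Proposition~\ref{p.poscr1} together with the cocycle identity yields
\[
\cro_1^{\widehat V_\perp}\bigl(\xi_{h_-}(h_-),\xi_{h_-}(h_+),\xi_{h_-}(hg_+),\xi_{h_-}(g_+)\bigr)<\cro_1^{\widehat V_\perp}\bigl(\xi_{h_-}(h_-),\xi_{h_-}(h_+),\xi_{h_-}(gh_+),\xi_{h_-}(g_+)\bigr).
\]
With this inserted your argument goes through.

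A smaller remark: your sketch of Proposition~\ref{p.collar2,n} via ``summing contributions of the $H$--orbit'' is not how the paper proceeds, and I do not see how to make that route precise. The paper's proof is a direct computation: normalize $H_\pm=e_{m+2},e_1$, write $x=[q_J(\sfs_x):\sfs_x:1]$ and $Hx=[\lambda^{-2}q_J(\sfs_x):\lambda^{-1}h_0\sfs_x:1]$, compute the cross ratio explicitly as $\lambda^2 q_J(\sfs_x)/\bigl((\lambda^2+1)q_J(\sfs_x)-2b_J(\sfs_x,\lambda h_0\sfs_x)\bigr)$, and then bound $2b_J(\sfs_x,\lambda h_0\sfs_x)$ from below by $\lambda(\mu+\mu^{-1})q_J(\sfs_x)$ (resp.\ $2\lambda q_J(\sfs_x)$ when $|\lambda_2(H)|=1$) using Lemma~\ref{lem.pre.positiv bJ} and the eigenstructure of $h_0$. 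No geometric series appears.
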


\begin{proof}
		We consider the vector space $H=\quotient{h^{q+2}_-}{h^{p-2}_-}$ with the induced non-degenerate from of signature $(2,q-p+2)$. Of course $h_\rho$ induces an element $\ov h\in\PO(2,q-p+2)=\PO(H)$
	satisfying
	$\frac{\lambda_{p-1}}{\lambda_p}(h_{\r})=  \frac{\lambda_{1}}{\lambda_2}(\ov h)$.
	
	We consider the $\T$-positive curve $\xi_{h_-}:\dg\to\calF_1(\quotient{h^{q+2}_-}{h^{p-2}_-})$ defined as in Corollary~\ref{cor.projections are positive}: $\xi_{h_-}(x)=[(x^{p-1}\cap h_-^{q+2})\oplus h_-^{p-2}]$, $\xi_{h_-}(h_-)=[h_-^{p-1}]$. 	 In order to make the notation lighter we will denote by $[x^{p-1}]\in\Is_1(H)$ the image $\xi_{h_-}(x)$.
	
	Since $\xi_{h_-}$ is $h_\rho$ equivariant and positive (Corollary~\ref{cor.projections are positive}), we deduce from Lemma ~\ref{p.collar2,n} that 
	\begin{equation}\left(1-\left|\frac{\lambda_{p}}{\lambda_{p-1}}(h_{\r})\right|\right)^{-1}<\cro_1^H([h^{p-1}_-],[h^{p-1}_+],[hg^{p-1}_+],[g^{p-1}_+])%\cro_1^H(\xi_{h_-}(h_-),\xi_{h_-}(h_+),\xi_{h_-}(hg_+),\xi_{h_-}(g_+))
	.\end{equation}

	Since $\xi_{h_-}$ is $\T$-positive   and the 5-tuple $(h_-,h_+,hg_+, gh_+,g_+)\in\dg^{(5)}$ is positively oriented (see \cite[Lemma 2.2]{Lee-Zhang}, and Figure~\ref{f.collar} below), we can apply Proposition~\ref{p.poscr1} to derive that $$\cro_1^H([h^{p-1}_-],[hg^{p-1}_+],[gh^{p-1}_+],[g^{p-1}_+])>1.$$ Since  the cocycle identity gives that $\cro_1^H([h^{p-1}_-],[h^{p-1}_+],[gh^{p-1}_+],[g^{p-1}_+])$ equals
	$$\cro_1^H([h^{p-1}_-],[h^{p-1}_+],[hg^{p-1}_+],[g^{p-1}_+])\cro_1^H([h^{p-1}_-],[hg^{p-1}_+],[gh^{p-1}_+],[g^{p-1}_+]),$$  we deduce
\begin{equation}\cro_1^H\mkern-2mu([h^{p-1}_-\mkern-2mu],[h^{p-1}_+\mkern-2mu],[hg^{p-1}_+\mkern-2mu],[g^{p-1}_+\mkern-2mu])<\cro_1^H\mkern-2mu([h^{p-1}_-\mkern-2mu],[h^{p-1}_+\mkern-2mu],[gh^{p-1}_+\mkern-2mu],[g^{p-1}_+\mkern-2mu])
.\end{equation}

We can then use Proposition~\ref{prop.projection of cross ratio} to compute the right hand side as
	$$\cro_1^H([h^{p-1}_-\mkern-2mu],[h^{p-1}_+\mkern-2mu],[gh^{p-1}_+\mkern-2mu],[g^{p-1}_+\mkern-2mu])=\cro_{p-1}(h^{p-1}_-,h^{p-1}_+,gh^{p-1}_+,h_-^{p-2}\oplus (h_-^{q+2}\cap g^{p-1}_+)).$$
This concludes the proof once we observe that, by definition, 
$$(h_-\vartriangleleft_{p-1} g_+)^{p-1}= h_-^{p-2}\oplus \left(g_+^{p-1}\cap h_-^{q+2}\right).$$

\end{proof}

\subsection{Positivity of hybrid flags}\label{s.hybrid}
Recall from Definition~\ref{def.hybrid flag} that to a pair of flags $x,y\in \calF_{p-1}$ and an integer $1\leq k\leq p-1$ we associate the $(x,k)$-hybrid flag $x\vartriangleleft_k y$. The goal of the section is to prove that positivity is preserved under hybridization. { We will need the following basic lemma

\begin{lem}\label{lem.5.8}
Let $(w,x,y,z)\in \calF_1^4$ be $\T$-positive. Then there is a continuous path $\zeta:[0,1]\to \calF_{1}$ from $x$ to $w$ such that $\zeta(t)$ is transverse to $y,z$ for all $t\in [0,1]$.
\end{lem}

\begin{proof}
Recall the notation from Example~\ref{e.3.9}. Up to the action by an element in $\PO(2,q)$, we can assume that $x=\sX$, $z=\sZ$ $y=P(\sfs_y) \sX$ and $w=P(-\sfs_w)\sX$ for $\sfs_y,\sfs_w\in c_J(V_J)$. Then  $\zeta(t):= P(-t \sfs_w)\sX$ is a continuous path from $x$ to $w$. Moreover $\zeta(t)$ is transverse to $y,z$ for all $t\in [0,1]$, since $(P(-t \sfs_w)\sX, P(\sfs_y) \sX,\sZ)$ is a positive triple: the image of this triple under $P(t \sfs_w)$ is $(\sX, P(t\sfs_w+\sfs_y) \sX,\sZ)$ .
\end{proof}

\begin{prop}\label{prop.k-projection is positive} Let $(w,x,y,z)$ be a $\T$-positive quadruple in $\calF^4_{p-1}$. For all $k=1,\ldots,p-1$ the triple 
	$(x\vartriangleleft_k w,y, z)\in\calF_{p-1}^3$
	is positive.
\end{prop}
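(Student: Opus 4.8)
The strategy is to reduce the statement to a computation with explicit positive elements in $U_\T^{>0}$, using the normalization afforded by Theorem \ref{t.ccomp}. Since positivity of triples is $\PO(p,q)$-invariant and $(w,x,y,z)$ is positive, I may act by an element of $\PO(p,q)$ and assume $x=\sX$, $z=\sZ$, and $y=P(\ov v)\sX$, $w=P(\ov u)^{-1}\sX$ for suitable $\ov u,\ov v\in V_\T^{p-1}$; here I use that $(x,w,y,z)$ being positive gives $w=Q\sX$ for $Q$ in the appropriate positive semigroup relative to the transverse pair $(x,z)$, and Proposition \ref{p.inverse} / Corollary \ref{c.permutation} to massage this into the stated form. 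The hybrid flag $\sX\vartriangleleft_k w$ only modifies the $k$-th entry of $\sX$, replacing $\sX^k=\langle e_{p+q},\ldots,e_{p+q-k+1}\rangle$ by $\sX^{k-1}\oplus(\sX^{k+1}\cap w^{p+q-k})$; the point is that this new $k$-plane lies ``between'' $\sX$ and $\sZ$ in the sense of the positive structure. I would first prove this for $k\leq p-2$, where the relevant elementary matrices $E_k(s)$ act on a single coordinate, and then handle $k=p-1$ separately using the block form of $E_{p-1}(\sfs)$ in \eqref{e.Ep-1}.

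\textbf{Key steps.} First, I would show that there is $t>0$ (resp. $\sfs\in \ov c_J(V_J)$ when $k=p-1$) such that $\sX\vartriangleleft_k w = E_k(t)\,\sX$, i.e. the hybrid flag is obtained from $\sX$ by applying a single elementary matrix; this is where the positivity of $w$ relative to $(\sX,\sZ)$ enters, guaranteeing the sign of the relevant parameter. Concretely, writing $w = P(\ov u)^{-1}\sX$ and intersecting $\sX^{k+1}$ with $w^{p+q-k}$, the parameter extracted is a positive combination of entries of $\ov u$ (for $k=p-1$, an element of the cone $c_J(V_J)$ or its closure, by Lemma \ref{lem.pre.positiv bJ}-type positivity of the induced bilinear form). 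Second, by Corollary \ref{c.sum} (applied with the elementary matrix $E_k(t)=ab(\vec t_k)=ba(\vec t_k)$, $\vec t_k\in\ov V_\T$) and Proposition \ref{prop.longest element}, the product $E_k(t)\,P(\ov v)$ again lies in $U_\T^{>0}$; hence
\begin{align*}
(\sX\vartriangleleft_k w,\ y,\ z) &= (E_k(t)\sX,\ P(\ov v)\sX,\ \sZ)\\
&= (E_k(t)\sX,\ E_k(t)\,(E_k(t)^{-1}P(\ov v))\,\sX,\ \sZ).
\end{align*}
The second entry is not yet in the right form, so instead I would argue via Theorem \ref{t.ccomp}: the three flags $\sX\vartriangleleft_k w$, $y$, $z$ are pairwise transverse (this needs checking), and I can connect $(\sX\vartriangleleft_k w,y,z)$ to a manifestly positive triple through a path of pairwise-transverse triples, applying Corollary \ref{lem.deform positive tuples}. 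The natural path is $t\mapsto (E_k(t)\sX, y, z)$ (or a path in the parameter defining the hybrid), which at one endpoint recovers $(\sX, y, z)$ — positive by hypothesis — and along which transversality with $y=P(\ov v)\sX$ and $z=\sZ$ is preserved because $E_k(t)\in U_\T^{>0}\cup\{\Id\}$ stabilizes $\sZ$ and moves $\sX$ within $\Omega_{\sZ}$.

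\textbf{Main obstacle.} The delicate point is the transversality verification: one must check $\sX\vartriangleleft_k w \pitchfork y$ and $\sX\vartriangleleft_k w \pitchfork z$, and that these transversalities persist along the chosen deformation path, so that Corollary \ref{lem.deform positive tuples} / Theorem \ref{t.ccomp} applies. Transversality with $z=\sZ$ should be immediate from the block structure since $E_k(t)$ fixes $\sZ$; transversality with $y$ is where positivity of the \emph{quadruple} $(w,x,y,z)$ — not merely the triple $(x,y,z)$ — is genuinely used, since it controls the relative position of the $k$-plane $\sX^{k+1}\cap w^{p+q-k}$ with respect to $y^{p+q-k}$. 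For $k=p-1$ this requires unpacking the quadratic form $q_J$ on the hybrid construction inside $V_J$, using Lemma \ref{lem.pre.positiv bJ} to ensure the relevant pairing is nonzero; I expect this $k=p-1$ case, combined with keeping track of which cone ($V_\T$ versus its closure) the parameter lands in, to be the bulk of the work, while the cases $k\leq p-2$ reduce to elementary linear algebra with the single-entry matrices $E_k(t)$.
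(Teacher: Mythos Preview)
Your strategy matches the paper's --- normalize to $x=\sX$, $z=\sZ$, $y=P_y\sX$, $w=P_w^{-1}\sX$ with $P_y,P_w\in U_\T^{>0}$, write the hybrid flag as an elementary matrix applied to $\sX$, then invoke the semigroup --- but you have the sign of that elementary matrix wrong, and this is what sends you on the deformation detour.

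The hybrid flag is \emph{not} $E_k(t)\sX$ with $t>0$. Geometrically, in the cyclic order $(w,x,y,z)$ the point $w$ lies on the opposite side of $x$ from $y$; the hybrid $\sX\vartriangleleft_k w$, which borrows its $k$-th slot from $w$, sits on that $w$-side, not ``between $\sX$ and $\sZ$''. The paper makes this explicit by first rewriting $w=\check P_w^{-t}\sZ$ via Proposition~\ref{p.inverse} --- a step you omit, but which is precisely what makes the intersection $\sX^{k+1}\cap w^{p+q-k}$ tractable, since $\check P_w^{-t}$ is lower triangular --- and then reads off $\sX\vartriangleleft_k w=E_k(-r_k)\sX$ with $r_k>0$. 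For $k=p-1$ a second application of Proposition~\ref{p.inverse}, this time inside the signature-$(2,q-p+2)$ block $\langle e_{p-1},\ldots,e_{q+2}\rangle$, yields $r_{p-1}\in c_J(V_J)$; Lemma~\ref{lem.pre.positiv bJ} is not the relevant tool here. With the correct sign the proof is then one line: act on the triple by $E_k(r_k)$, which fixes $\sZ$, to obtain $(\sX,\,E_k(r_k)P_y\sX,\,\sZ)$, and $E_k(r_k)P_y\in U_\T^{>0}$ by Corollary~\ref{c.sum} since $r_k>0$. No deformation path and no separate transversality verification are needed.

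Your deformation fallback would not rescue the wrong sign anyway: along $s\mapsto(E_k(s)\sX,P_y\sX,\sZ)$ for $s\in[0,t]$ with $t>0$, transversality with $y$ is equivalent to $E_k(-s)P_y\sX\in\Omega_\sX$, but now the parameter $-s$ is negative and Corollary~\ref{c.sum} no longer gives this --- exactly the obstacle you flagged. Finally, the positivity of the full quadruple is not used to check transversality with $y$, as you suggest; it enters only in pinning down the sign of $r_k$ (respectively the cone membership of $r_{p-1}$) through the description of $w$ as $\check P_w^{-t}\sZ$.
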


\begin{proof}
Since $(x,y,z)\in\calF_{p-1}^3$ is positive, by Corollary~\ref{lem.deform positive tuples} it is enough to construct a continuous path $\eta:[0,1]\to \calF_{p-1}$  with $\eta(0)=x$ and $\eta(1)=x\vartriangleleft_k w$ such that $\eta(t)$ is transverse to $y,z$ for all $t\in [0,1]$.

We consider the projection $\pi^k_x:\Is_{k}^{\tv x}\to\mathbb P\left({\quotient{x^{k+1}}{x^{k-1}}}\right)$ for $k< p-1$ (respectively $\pi^{p-1}_x:\Is_{p-1}^{\tv x}\to\calF_1\left({\quotient{x^{q+2}}{x^{p-2}}}\right)$) defined in Section~\ref{s.positive-bdy-map}. 
Choose a continuous path $\zeta_{p-1}:[0,1]\to\calF_{1}(\quotient{x^{q+2}}{x^{p-2}})$ from $[x^{p-1}]$ to $\pi^{p-1}_x(w)$ as in Lemma~\ref{lem.5.8} for the quadruple $(\pi^{p-1}_x(w),[x^{p-1}],\pi^{p-1}_x(y),\pi^{p-1}_x(z))$. The latter quadruple is positive by Proposition~\ref{prop.projections are positive}.
For $k<p-1$ choose a continuous path $\zeta_{k}:[0,1]\to\mathbb P(\quotient{x^{k+1}}{x^{k-1}})$ from $[x^k]$ to $\pi_x^k(w)$ not passing through $\pi_x^k(y)$ and $\pi_x^k(z)$. This is possible according to Proposition~\ref{lem.kth-projection}.

We set $\eta(0):=x$. For $t>0$, if $k<p-1$, we set $\eta(t)^{i}:=x^{i}$ for $i\neq k$ and $\eta^k(t)$ to be the unique point of $\Is_k$ that contains $x^{k-1}$, is contained in $x^{k+1}$ and satisfies $\pi_x^{k}(\eta(t)^k)=\zeta_k(t)$ in $\P(\quotient{x^{k+1}}{x^{k-1}})$. Analogously, for $k=p-1$, we set $\eta(t)^{i}:=x^{i}$ for $i\neq p-1$ and $\eta^{p-1}(t)$ to be the unique point of $\Is_{p-1}$ that contains $x^{p-2}$, is contained in $x^{q+2}$ and satisfies $\pi_x^{k}(\eta(t)^k)=\zeta_k(t)$ in $\Is_1(\quotient{x^{q+2}}{x^{p-2}})$.

By construction we have that $\eta$ is a continuous path from $x$ to $x\vartriangleleft_k w$. Since $\eta(t)^{i}=x^{i}$ for $i\neq k$ and $x$ is transverse to $y$ and $z$ it is enough to observe that $\eta(t)^k$ is transverse to $y^k$ and $z^k$, which is equivalent to $\pi^k_x(\eta(t))$ is transverse to $\pi^k_x(y^k)$ and $\pi^k_x(z^k)$. The latter holds by construction.
\end{proof}
}

\subsection{Step 2}

To finish the proof of the collar lemma we need the following fact, which is essentially a combination of Propositions~\ref{prop.k-projection is positive} and~\ref{prop.derivative cross ratio for positive triple: general}.

\begin{prop}\label{prop.last step before collar}
	Let $\r$ be $\T$-positive and $(w,x,y,z)$ be a $\T$-positive quadruple in $\dg^4$. Then for any $k\leq p-1$
	\begin{align*}
		\gcr_{k}\left(x^k,y^{k},z^{k},(x\vartriangleleft_k w)^k\right)\leq \gcr_{k}\left(x^k,y^{k},z^{k},w^k\right)
	\end{align*}
\end{prop}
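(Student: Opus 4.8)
The strategy is to realize all flags in a single normal form and then interpolate between the hybrid flag $(x\vartriangleleft_k w)^k$ and $w^k$ by a path that stays tangent to the positive cone, so that Proposition \ref{prop.derivative cross ratio for positive triple: general} can be applied. First I would use the $\PO(p,q)$-invariance of $\gcr_k$ to normalize $x=\sX$, $z=\sZ$, $y=P_y\sX$, and $w=P_w^{-1}\sX$ with $P_y,P_w\in U_\T^{>0}$, exactly as in the proof of Proposition \ref{prop.k-projection is positive}. From that proof we know $(x\vartriangleleft_k w)^k=(E_k(-r_k)\sX)^k$, where $r_k\in\R_{>0}$ (resp. $r_k\in c_J(V_J)$ when $k=p-1$); and by Proposition \ref{p.inverse} applied as in the proof of Proposition \ref{prop.k-projection is positive}, $w=\check P_w^{-t}\sZ$, whose first $p-1$ subspaces sit in the same connected component of transverse flags as $E_k(-\check r_k)\sX$ for the appropriate $\check r_k$, with $\check r_k$ a positive multiple / element of $c_J(V_J)$ as well.

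Next I would set up the interpolating path. The point is that both $(x\vartriangleleft_k w)^k$ and $w^k$ arise by applying a unipotent of the same ``$k$-th root direction'' type to $\sX$. Concretely, consider the path $t\mapsto F_k(t):=(E_k(-tr_k)\sX)^k$ for $t\in[0,1]$ in the case where one endpoint is the hybrid flag; more generally I would build a path inside the $k$-th boundary Grassmannian $\Is_k$ (for $k<p-1$ using property $H_k$ so that this is literally a path in $\P(x^{k+1}/x^{k-1})$, and for $k=p-1$ the path in $\Is_1(x^{q+2}/x^{p-2})$ coming from $\xi_x$) that starts at $(x\vartriangleleft_k w)^k$, ends at $w^k$, and whose velocity everywhere lies in the cone $c_k^+(x)$ determined by the positive triple $(x,y,z)$. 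The key input making this possible is Proposition \ref{prop.k-projection is positive}: it tells us $(x\vartriangleleft_k w,y,z)$ is positive, so the hybrid flag is in the ``correct'' connected component; combined with Theorem \ref{t.ccomp} and Corollary \ref{lem.deform positive tuples}, every flag along the natural parameterization between the hybrid flag and $w$ keeps the triple with $y,z$ positive, hence keeps the velocity pointing into $c_k^+(x)$. Then Proposition \ref{prop.derivative cross ratio for positive triple: general} gives
\[
\frac{d}{dt}\,\gcr_k\!\left(x^k,y^k,z^k,F_k(t)\right)>0
\]
along the whole path (more precisely: at each $t$ the velocity $\dot F_k(t)$ is a vector in $c_k^+(x)$ applied at the base point $F_k(t)$, and the monotonicity statement of that proposition, transported by the $\PO(p,q)$-element carrying $F_k(t)$ back to a standard triple, yields positivity of the derivative). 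Integrating from $t=0$ (the hybrid flag) to $t=1$ (the flag $w$) gives $\gcr_k(x^k,y^k,z^k,(x\vartriangleleft_k w)^k)<\gcr_k(x^k,y^k,z^k,w^k)$, which is the claimed inequality.

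The main obstacle I anticipate is the last point made above: Proposition \ref{prop.derivative cross ratio for positive triple: general} is stated for the derivative at the base point of a \emph{fixed} positive triple $(x,y,z)$, namely it controls $d_{x^k}\gcr_k(z^k,x^k,\cdot,y^k)$. Here the fourth slot is varying and the ``base point'' for the relevant derivative is the moving point $F_k(t)$, not $x$. I would handle this by using the cocycle identity of Proposition \ref{prop.property of grassmannian cro}(3) to write $\gcr_k(x^k,y^k,z^k,F_k(t))$ as a product $\gcr_k(x^k,y^k,z^k,F_k(s))\cdot\gcr_k(\text{stuff})$, isolating the incremental cross ratio $\gcr_k(F_k(s)^k,\ldots,\cdot,\ldots)$ whose derivative at $F_k(s)$ is exactly of the form controlled by Proposition \ref{prop.derivative cross ratio for positive triple: general} — for this I need that $(F_k(s), y, z)$, or rather the appropriate permutation, is a positive triple and that $\dot F_k(s)\in c_k^+(F_k(s))$, both of which follow from Corollary \ref{c.permutation}, Proposition \ref{prop.k-projection is positive} and Corollary \ref{lem.deform positive tuples}. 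Once the bookkeeping identifying the moving cone $c_k^+(F_k(s))$ with the direction of $\dot F_k(s)$ is pinned down, the rest is an application of the fundamental theorem of calculus together with the Lipschitz/$C^1$ regularity of the relevant curves (Theorem \ref{t.Lipschitz}), exactly in the spirit of the proof of Theorem \ref{thm.pos ratioed}.
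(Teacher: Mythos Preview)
Your plan has a genuine gap at the point you yourself flag as the ``main obstacle''. Proposition \ref{prop.derivative cross ratio for positive triple: general} does not control the derivative of $\gcr_k$ in an arbitrary direction of $T_{F^k(s)}\Is_k$; it only applies to vectors in the cone $c_k^+(F(s))$, which by definition lies in the subspace $T_{F^k(s)}[F^{k+1}(s)/F^{k-1}(s)]$ (for $k<p-1$; the analogous restriction $\ker\supset F^{p-2}$, $\im\subset F^{q+2}$ holds for $k=p-1$). Concretely, $\dot F^k(s)\in c_k^+(F(s))$ forces $F^{k-1}(s)\subset\ker\dot F^k(s)$, i.e.\ the $(k-1)$--part of the full flag is infinitesimally fixed. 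But any path of full flags from $x\vartriangleleft_k w$ to $w$ must move the $(k-1)$--part from $x^{k-1}$ to $w^{k-1}$, and these are transverse; so no such path can have $\dot F^k(s)\in c_k^+(F(s))$ throughout. Equivalently, your claim that the path ``is literally a path in $\P(x^{k+1}/x^{k-1})$'' cannot hold at the endpoint, since $w^k$ does not contain $x^{k-1}$. The results you invoke (Corollary \ref{c.permutation}, Proposition \ref{prop.k-projection is positive}, Corollary \ref{lem.deform positive tuples}) only give positivity of the triple $(F(s),y,z)$; they say nothing about the tangent vector lying in the small cone.

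The paper's proof avoids this by moving a different slot: after the cocycle reduction to $\gcr_k((x\vartriangleleft_k w)^k,y^k,z^k,w^k)\geq 1$, it keeps the hybrid flag and $w$ fixed and varies the \emph{third} argument along the boundary curve $\xi$ from $y$ to $z$. This is exactly the situation in which the cone hypothesis is available: Propositions \ref{p.p-1derivative} and \ref{p.kderivative} guarantee $\dot\xi^k(y_0)\in c_k^+(\xi(y_0))$ at almost every $y_0$. The remaining point is that $(\xi(y_0),\xi(w),\xi(x)\vartriangleleft_k\xi(u))$ is a positive triple for $u\in(w,x)_y$, which is where Proposition \ref{prop.k-projection is positive} actually enters; one then lets $u\to w$. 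So the proof does use the hybrid--positivity proposition, but not to interpolate in the fourth slot as you propose---rather to certify the positive triple needed when integrating along $\xi$.
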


\begin{proof}
First observe that Proposition~\ref{prop.k-projection is positive} applied to  the $\T$-positive quadruple $(\xi(w),\xi(x),\xi(y),\xi(z))$ guarantees that $(x\vartriangleleft_k w)^k$ is transverse to $y^k$ and $z^k$. Since also $x^k$  is transverse to  $y^k$ and $z^k$, the cross ratio on the left hand side of the statement is defined.
	Using the cocycle identity of the cross ratio the claim reduces to showing that 
	$$\gcr_{k}\left((x\vartriangleleft_k w)^k,y^{k},z^{k},w^k\right) \geq 1.$$
	We consider $\gcr_{k}\left((x\vartriangleleft_k w)^k,y^{k},y_t^{k},w^k\right)$ as a function of the point $y_t$ varying between $y$ and $z$. 
	Since $\xi^k$ has Lipschitz image, by Rademacher's theorem and the fundamental theorem of calculus we know that $\gcr_{k}\left((x\vartriangleleft_k w)^k,y^{k},z^{k},w^k\right)$ is the integral of the derivative of this function - cfr. the proof of Theorem~\ref{thm.pos ratioed}. As a result, using again the cocycle identity, it is enough to show that, 	for every $y_0\in (y,z)_x$ such that $\dot \xi^k(y_0)$ is defined, and for some $C^1$--approximation  $t\to y_t^k$ of $\xi^k$ at $y_0^k$, it holds 
	$$\left.\frac{d}{dt}\right|_{t=0} \gcr_{k}\left((x\vartriangleleft_k w)^k,y_0^{k},y_t^{k},w^k\right) \geq 0;$$
here once again  Proposition~\ref{prop.k-projection is positive} applied to the 4-tuple $(\xi(w),\xi(x),\xi(y_0),\xi(z))$) ensures that the cross ratio is defined.
	
	\begin{figure}[h]
		\begin{tikzpicture}[scale=.8]
			\draw (0,0) circle [radius =1];
			\node at (1,0) [right] {$w$};
			\node at (-0.5,0.9) [above] {$y_0$};
			\node at (-1,0) [left] {$y$};
			\node at (0,1) [above] {$y_t$};
			\node at (0,-1) [below] {$x$};
			\node at (.8,.8)[right]{$z$};
			\node at (.8,-.8)[right]{$u$};
			\filldraw (.7,.7) circle [radius=1pt];
			\filldraw (1,0) circle [radius=1pt];
			\filldraw (-0.36,0.92) circle [radius=1pt];
			\filldraw (-1,0) circle [radius=1pt];
			\filldraw (0,1) circle [radius=1pt];
			\filldraw (0,-1) circle [radius=1pt];
			\filldraw (.7,-.7) circle [radius=1pt];
			\node at (0,0.3) [below] {$\circlearrowright$};
		\end{tikzpicture}
		\caption{The order of $x,y,y_0,y_t,z,w,u$ for $t>0$.}
	\end{figure}

	Proposition~\ref{prop.k-projection is positive} together with Proposition~\ref{c.permutation} implies that for all $u\in (w,x)_y$ the triple $(\xi(y_0),\xi(w),\xi(x)\vartriangleleft_k \xi(u))$ is $\T$-positive. Thus we can apply Proposition~\ref{prop.derivative cross ratio for positive triple: general} to derive that%\marginpar{\J Do we want to also point out that by continuity for all $u$ the same cone is defined?}
	$$\left.\frac{d}{dt}\right|_{t=0} \gcr_{k}\left((x\vartriangleleft_k u)^k,y_0^{k},y_t^{k},w^k\right) > 0.$$
	Now the regularity of the cross ratio yields that
	$$\left.\frac{d}{dt}\right|_{t=0} \gcr_{k}\left((x\vartriangleleft_k u)^k,y_0^{k},y_t^{k},w^k\right) \to \left.\frac{d}{dt}\right|_{t=0} \gcr_{k}\left((x\vartriangleleft_k w)^k,y_0^{k},y_t^{k},w^k\right)$$
	for $u\to w$. This proves the claim.
\end{proof}

The collar lemmas can now be proved following the same lines as the proof of Theorem~\ref{t.collarmax}:%using that the representation is positively ratioed.

\begin{thm}\label{t.collar}
	Let $\r:\G\to\PO(p,q)$ be a $\T$-positive Anosov representation and $g,h\in\G$ a linked pair. Then for any $k\leq p-1$
	\begin{align*}
		\left(1-\left|\frac{\lambda_{k+1}}{\lambda_{k}}(h_{\r})\right|\right)^{-1}<\lambda_1^2 \cdots\lambda_k^2 (g_{\r}).
	\end{align*}
\end{thm}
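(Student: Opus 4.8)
The plan is to combine the two steps outlined in the introduction to Section 5: first bound the $k$-th fundamental weight $\cro_k(h^k_-,h^k_+,gh^k_+,g^k_+)$ from below by the cross ratio involving the hybrid flag (Step 1), then compare that quantity with the one appearing in Step 1 (the hybrid estimate). Concretely, as noted in the Remark following the theorem statement, we may replace $g$ by $g^{-1}$ if necessary so that the quadruple $(h_-,h_+,gh_+,g_+)$ is positively oriented in $\dg$; since $g$ and $g^{-1}$ have the same eigenvalue data, this suffices to prove the collar lemma for the pair $(g,h)$.

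First I would recall that $\cro_k(\g^k_-,x^k,\g x^k,\g^k_+)=\lambda_1^2\cdots\lambda_k^2(\g_\r)$ (Lemma \ref{lem.cro-period}), so it suffices to estimate $\cro_k(g^k_-,g^k_+,gh^k_+,gh^k_+)$... more precisely we want a lower bound on $\lambda_1^2\cdots\lambda_k^2(g_\r)$. Applying Lemma \ref{lem.cro-period} with $\g=g$ and $x=h_+$ gives
\begin{align*}
	\lambda_1^2\cdots\lambda_k^2(g_\r)=\cro_k(g^k_-,h^k_+,gh^k_+,g^k_+).
\end{align*}
Now I would use the cocycle identities of Proposition \ref{prop.property of grassmannian cro} together with the fact that $\r$ is $k$-positively ratioed (Theorem \ref{thm.pos ratioed}): since $(h_-,h_+,gh_+,g_+)$ is positively oriented and $g_-\in(h_+,g_+)_{h_-}$ — this is where linkedness enters, forcing $g_-$ and $g_+$ to lie in different components of $\dg\setminus\{h_-,h_+\}$ — one can insert $h^k_-$ in place of $g^k_-$ at the cost of a factor that is a cross ratio of a cyclically ordered quadruple, hence $\geq 1$. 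This yields $\cro_k(h^k_-,h^k_+,gh^k_+,g^k_+)\leq\lambda_1^2\cdots\lambda_k^2(g_\r)$.

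Next I would invoke Proposition \ref{prop.last step before collar} applied to the positive quadruple $(h_-\vartriangleleft\text{-type arrangement})$: writing $w=h_-$, $x=h_-$... — more carefully, Proposition \ref{prop.last step before collar} with the positive quadruple $(g_+, h_-, h_+, gh_+)$ appropriately arranged gives
\begin{align*}
	\cro_k\left(h^k_-,h^k_+,gh^k_+,(h_-\vartriangleleft_k g_+)^k\right)\leq\cro_k\left(h^k_-,h^k_+,gh^k_+,g^k_+\right),
\end{align*}
where I must check the orientation hypothesis of that proposition matches the cyclic order of $(g_+,h_-,h_+,gh_+)$ coming from linkedness (see the figure referenced in Proposition \ref{prop.step 1 k=p-1}). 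Finally Step 1 — Proposition \ref{prop.step 1 small k} for $k<p-1$ and Proposition \ref{prop.step 1 k=p-1} for $k=p-1$, whose hypotheses hold by Proposition \ref{prop.Hk for pos} and Proposition \ref{prop.projections are positive} respectively — provides
\begin{align*}
	\left(1-\left|\tfrac{\lambda_{k+1}}{\lambda_k}(h_\r)\right|\right)^{-1}<\cro_k\left(h^k_-,h^k_+,gh^k_+,(h_-\vartriangleleft_k g_+)^k\right).
\end{align*}
Chaining the three inequalities gives the theorem.

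The main obstacle I expect is bookkeeping the cyclic orders: one must verify carefully that linkedness of $g,h$ translates into exactly the positive orientations required to apply (i) positively-ratioed-ness when swapping $g^k_-$ for $h^k_-$, (ii) Proposition \ref{prop.last step before collar} (which needs a positive quadruple with a specific arrangement of the hybrid-flag slot), and (iii) the Step 1 propositions. The points involved are $h_\pm, g_\pm, gh_+$ (and implicitly $hg_+$ as in \cite[Lemma 2.2]{Lee-Zhang}), and one must track how $g\mapsto g^{-1}$ flips the relevant interval. Once the combinatorics of the circle is pinned down, each analytic input is already available as a cited proposition, so the proof is a short concatenation.
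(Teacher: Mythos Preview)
Your proposal is correct and follows essentially the same approach as the paper: Step~1 (Propositions \ref{prop.step 1 small k} and \ref{prop.step 1 k=p-1}) to bound by the hybrid cross ratio, Proposition \ref{prop.last step before collar} to pass from the hybrid flag to $g_+^k$, the positively ratioed property to replace $h^k_-$ by $g^k_-$, and Lemma \ref{lem.cro-period} to identify the result with $\lambda_1^2\cdots\lambda_k^2(g_\r)$. One small bookkeeping slip: you write $g_-\in(h_+,g_+)_{h_-}$, but linkedness together with the orientation $(h_-,h_+,gh_+,g_+)$ actually forces the cyclic order $h_-,g_-,h_+,gh_+,g_+$, so $g_-$ lies in $(h_-,h_+)_{g_+}$; this is precisely what makes $(g_-,h_+,gh_+,h_-)$ cyclically ordered and hence $\cro_k(g^k_-,h^k_+,gh^k_+,h^k_-)>1$, giving the inequality you want via the cocycle identity.
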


\begin{proof}
	\begin{figure}[h]
		\begin{tikzpicture}[scale=1.2]
			\draw (0,0) circle [radius =1];
			\draw (-1,0) [blue, thick] to (1,0);
			\node at (-1,0) [left] {$g_+$};
			\node at (1,0) [right] {$g_-$};
			\draw (0,-1) [red, thick] to (0,1);
			\node at (0,1) [above] {$h_+$};
			\node at (0,-1) [below] {$h_-$};
			
			\node at (-0.5,0.9) [above] {$hg_+$};
			\filldraw (-0.36,0.92) circle [radius=1pt];
			\draw (0.36,0.92) [blue, thick] to (-0.36,0.92);
			\filldraw (-0.92,0.36) circle [radius=1pt] node [above left] {$gh_+$};
			\draw (-0.92,0.36) [red, thick] to (-0.92,-0.36);
			%	\filldraw (.7,.7) circle [radius=1pt];
			\filldraw (1,0) circle [radius=1pt];
			
			\filldraw (-1,0) circle [radius=1pt];
			\filldraw (0,1) circle [radius=1pt];
			\filldraw (0,-1) circle [radius=1pt];
			%	\filldraw (.7,-.7) circle [radius=1pt];
			
			\draw (-1,0) to (0,-1);
			\filldraw (-0.6,-0.4) circle [radius=1pt] node [below left]{$h_-\vartriangleleft_k g_+$};
		\end{tikzpicture}
		\caption{The proof of Theorem~\ref{t.collar}.}\label{f.collar}
	\end{figure}
	From Propositions~\ref{prop.step 1 small k} and~\ref{prop.step 1 k=p-1} we know that for any $k\leq p-1$
	\begin{align*}
		\left(1-\left|\frac{\lambda_{k+1}}{\lambda_{k}}(h_{\r})\right|\right)^{-1}<\cro_{k}\left(h^{k}_-,h^{k}_+,gh^{k}_+,(h_-\vartriangleleft_k g_+)^{k}\right).
	\end{align*}
	Proposition~\ref{prop.last step before collar} implies
	\begin{align*}
		\cro_{k}(h^{k}_-,h^{k}_+,gh^{k}_+,(h_-\vartriangleleft_k g_+)^{k})\leq\cro_{k}(h^{k}_-,h^{k}_+,gh^{k}_+,g_+^{k}).
	\end{align*}
	Since $\r$ is $k$--positively ratioed and $h_-,g_-,h_+,gh_+,g_+$ are in that order on $\dg$ (which implies $\cro_{k}(g^{k}_-,h^{k}_+,gh^{k}_+,h_-^{k})>1$), we derive with the cocycle identity that 
	\begin{align*}
		\cro_{k}(h^{k}_-,h^{k}_+,gh^{k}_+,g_+^{k})<\cro_{k}(g^{k}_-,h^{k}_+,gh^{k}_+,g_+^{k}).
	\end{align*}
	The theorem follows from Lemma~\ref{lem.cro-period}, stating
\[\cro_{k}(g^{k}_-,h^{k}_+,gh^{k}_+,g_+^{k})=\lambda_1^2 \cdots\lambda_k^2 (g_{\r}). \qedhere\]
\end{proof}

\section{Higher rank Teichm\"uller theory for $\PO(p,q)$}

In this section we show %, heavily building on the work of Guichard-Wienhard, 
that $\T$-positive Anosov representations into $\PO(p,q)$ form higher rank Teichm\"uller spaces. In other words we show that being $\T$-positive Anosov is a closed condition in the character variety 
$$\Xi(\PO(p,q))=\quotient{\Homr(\G,\PO(p,q))}{\PO(p,q)}.$$
A direct consequence of Corollary~\ref{lem.deform positive tuples} is that the set of $\T$-positive representations is open within the set of Anosov representations, and thus the set of $\Theta$-positive Anosov representations is open in the character variety. We then derive that $\Theta$-positive Anosov representations form connected components of $\Xi( \PO(p,q))$.

We begin with an easy consequence of Theorem~\ref{t.ccomp} due to Guichard-Wienhard \cite{GWPaper}.

\begin{prop}\label{cor.positive clopen under Anosov}
	$\T$-positive Anosov representations form a union of connected components of the set $\Hom_{\T}\subset  \Hom(\G,\PO(p,q))$  of $\T$-Anosov representations. 
\end{prop}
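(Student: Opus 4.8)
The plan is to show that the subset $\mathcal P\subset\Hom_\T$ of $\T$--positive Anosov representations contains, together with any of its points, the entire connected component of $\Hom_\T$ through that point. Since $\G$ is finitely presented, $\Hom(\G,\PO(p,q))$ is a real algebraic variety and hence locally path--connected, and $\Hom_\T$ is open in it (being $k$--Anosov is an open condition, and $\T$--Anosov is the intersection of finitely many such conditions); therefore the connected components of $\Hom_\T$ are open, coincide with its path--components, and $\mathcal P$ is automatically a union of them once it contains the path--component of each of its points. So it suffices to fix a continuous path $(\r_t)_{t\in[0,1]}$ in $\Hom_\T$ with $\r_0\in\mathcal P$ and to prove $\r_t\in\mathcal P$ for every $t$.

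The first input is that each $\r_t$, being $\T$--Anosov, admits a continuous, equivariant, transverse boundary map $\xi_t\colon\dg\to\calF_{p-1}$, and that the limit map of an Anosov representation depends continuously on the representation, so that $t\mapsto\xi_t(x)\in\calF_{p-1}$ is continuous for every fixed $x\in\dg$ (see \cite{GGKW}). Granting this, fix an integer $n\geq3$ and a positively oriented $n$--tuple $(a_1,\dots,a_n)$ in $\dg$, and consider the path $c\colon[0,1]\to\calF_{p-1}^n$, $c(t)=(x_1(t),\dots,x_n(t))$ with $x_i(t):=\xi_t(a_i)$. It is continuous by the previous remark, and since each $\xi_t$ is transverse we have $x_i(t)\tv x_j(t)$ for all $i\neq j$ and all $t$. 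Because $\r_0\in\mathcal P$ and $(a_1,\dots,a_n)$ is positively oriented, $c(0)$ is a $\T$--positive $n$--tuple; Corollary \ref{lem.deform positive tuples} then forces $c(t)$ to be a $\T$--positive $n$--tuple for every $t\in[0,1]$.

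Since $n$ and the positively oriented $n$--tuple were arbitrary, this shows that for each $t$ the boundary map $\xi_t$ sends every positively oriented $n$--tuple to a $\T$--positive $n$--tuple, i.e. $\xi_t$ is a positive curve; alternatively, having deformed a single $n$--tuple for each $n$, one concludes via Corollary \ref{cor.pos tuples}. Hence $\r_t$ is $\T$--positive Anosov for all $t$, which is what we wanted. The argument is essentially formal once one has Theorem \ref{t.ccomp} at hand (used through Corollary \ref{lem.deform positive tuples}) together with local path--connectedness of the representation variety; the only point that is not pure soft topology is the continuous dependence of the Anosov boundary map on the representation, which is a standard feature of Anosov representations and is where I would be most careful to cite precisely — I do not expect a genuine obstacle beyond that.
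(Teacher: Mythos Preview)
Your proof is correct and follows essentially the same approach as the paper: reduce to paths via local path--connectedness of the representation variety, use the continuous dependence of the Anosov boundary map on the representation, and apply Corollary~\ref{lem.deform positive tuples} to a fixed positively oriented $n$--tuple. Your write-up is somewhat more detailed (and explicitly invokes Corollary~\ref{cor.pos tuples} as an alternative conclusion), but the argument is the same.
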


\begin{proof}
	Since open connected subsets of a real algebraic variety are path connected, 
	 it is enough to verify that if $\r_{*}:[0,T]\to \Hom_{\T}$ is a continuous path, and $\r_0:\G\to \PO(p,q)$ is $\T$-positive, then $\rho_T$ is $\T$-positive. We denote by $\xi_t:\dg\to \calF_{p-1}$ the boundary map associated to the representation $\rho_t$, which is transverse because $\rho_t$ is Anosov. It is well known that  $\xi_t$ depends continuously on $\rho_t$ \cite[Theorem 5.13]{Guichard-Wienhard-IM}, as a result, given a $\T$-positive $n$--tuple $(x_1,\ldots,x_n)\in \dg^{(n)}$, the path $(\xi_t(x_1),\ldots,\xi_t(x_n))$ is a continuous transverse path. Corollary~\ref{lem.deform positive tuples} applies, and gives that also $(\xi_T(x_1),\ldots,\xi_T(x_n))$ is a $\T$-positive $n$--tuple.
\end{proof}

A \emph{semi-simplification} of a representation $\r^{ss}:\G\to\PO(p,q)$ of $\r:\G\to\PO(p,q)$ can be characterized as a representation in the unique closed orbit inside the closure $\ov{\PO(p,q)\cdot \r} \subset\Hom(\G,\PO(p,q))$, where the action of $\PO(p,q)$ is by conjugation (cfr. \cite[Proposition 2.39]{GGKW}). Thanks to the semi-simplification, it is enough  to consider  reductive limits of representations, instead of general limits: if $\{\r_n\}_{n\in \N}$ converges to $\r_0$ in $\Hom(\G,\PO(p,q))$, then we can find conjugates $\{g_n \r_n g_n^{-1}\}_{n\in \N}$ converging to $\r^{ss}_0$. Moreover any semi-simplification is reductive.

\begin{cor}\label{cor.positive for semisimplification}
	A representation $\r:\G\to\PO(p,q)$ is $\T$-positive Anosov if and only if its semi-simplifications $\r^{ss}$ are.
\end{cor}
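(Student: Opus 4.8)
The plan is to deduce the statement from three facts already in hand: the $\T$--positive Anosov locus is open in $\Hom(\G,\PO(p,q))$; it is closed inside the $\T$--Anosov locus $\Hom_\T$ (Proposition~\ref{cor.positive clopen under Anosov}); and a semi-simplification of $\r$ is realized as a limit of conjugates of $\r$. Since any two semi-simplifications of $\r$ are conjugate and being $\T$--positive Anosov is conjugation invariant, it suffices to treat a single $\r^{ss}$; I would fix $(g_n)\subset\PO(p,q)$ with $g_n\r g_n^{-1}\to\r^{ss}$, which is possible because $\r^{ss}$ lies in the unique closed $\PO(p,q)$--orbit inside $\ov{\PO(p,q)\cdot\r}$.

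For the implication ``$\r^{ss}$ $\T$--positive Anosov $\Rightarrow$ $\r$ $\T$--positive Anosov'' I would argue by openness. Anosov representations form an open subset of $\Hom(\G,\PO(p,q))$, and by Corollary~\ref{lem.deform positive tuples} the $\T$--positive ones are open inside the $\T$--Anosov locus; hence the $\T$--positive Anosov representations are open in $\Hom(\G,\PO(p,q))$. As $g_n\r g_n^{-1}\to\r^{ss}$, for $n$ large $g_n\r g_n^{-1}$ is $\T$--positive Anosov, and therefore so is $\r$.

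For the converse, suppose $\r$ is $\T$--positive Anosov, so that each $g_n\r g_n^{-1}$ is $\T$--positive Anosov, and in particular $\T$--Anosov. The decisive point is that $\T$--Anosovness is insensitive to semi-simplification: $\r$ and $\r^{ss}$ have the same Jordan projection, and Anosovness of a representation of the word hyperbolic group $\G$ is detected by the linear growth of the relevant eigenvalue gaps along $\G$ (cf.\ \cite{GGKW, PK}); hence $\r^{ss}\in\Hom_\T$. Now Proposition~\ref{cor.positive clopen under Anosov} gives that the $\T$--positive Anosov representations form a union of connected components of $\Hom_\T$, so they are closed in $\Hom_\T$; since $g_n\r g_n^{-1}\to\r^{ss}$ is a convergence taking place inside $\Hom_\T$ along $\T$--positive Anosov representations, the limit $\r^{ss}$ is $\T$--positive Anosov.

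I expect the main obstacle to be precisely the reduction step just used, namely ensuring $\r^{ss}\in\Hom_\T$ before one may apply the relative closedness of the positive locus: the $\T$--Anosov locus is open but not closed in $\Hom(\G,\PO(p,q))$, so a limit of $\T$--Anosov representations is not automatically $\T$--Anosov, and one must genuinely invoke the semi-simplification invariance of Anosovness (equivalently, the characterization of Anosov representations through eigenvalue data). Everything else is routine bookkeeping of the open and closed statements already established.
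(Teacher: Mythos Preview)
Your proof is correct and takes essentially the same approach as the paper. Both arguments rest on the two key ingredients you identified: the fact (cited in the paper as \cite[Proposition 2.39]{GGKW}) that $\r$ is $\T$--Anosov if and only if $\r^{ss}$ is, together with Proposition~\ref{cor.positive clopen under Anosov}; the paper phrases the conclusion by noting that $\r$ and $\r^{ss}$ lie in the same connected component of $\Hom_\T$, while you unpack this into separate openness and relative closedness arguments along the sequence $g_n\r g_n^{-1}\to\r^{ss}$, which is equivalent.
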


\begin{proof}
	It is known that $\r$ is $\T$-Anosov if and only if $\r^{ss}$ is (\cite[Proposition 2.39]{GGKW}). Since $\r$ and $\r^{ss}$ belong to the same connected component of $\Hom_{\T}$, the result follows from Proposition~\ref{cor.positive clopen under Anosov}.
\end{proof}

{Since $\T$-Anosov representations form open sets of $ \Hom(\G,\PO(p,q))$, it follows again from Proposition~\ref{cor.positive clopen under Anosov} that also $\T$-positive Anosov representations, a union of connected components of the former set, form open sets of $ \Hom(\G,\PO(p,q))$.} It remains to show that  $\T$-positive Anosov representations also form closed subsets of $ \Hom(\G,\PO(p,q))$. For this we crucially need the following result from \cite{Beyrer-Pozzetti2}: any proximal and reductive limit of positively ratioed representations admits a well behaved bounday map.

\begin{thm}[{\cite[Theorem B]{Beyrer-Pozzetti2}}]\label{thm.pos ratio limit}
	Let $\{\r_n:\G\to \PO(p,q)\}_{n\in \N}$ be a sequence of $k$--positively ratioed representations converging to a $k$--proximal reductive representation $\r_0:\G\to \PO(p,q)$. Then $\r_0$ admits an equivariant transverse continuous boundary map
	$\xi^{k}_{\r_0}:\dg\to \Is_k(\R^{p,q}),$
	which is dynamics preserving\footnote{Recall Definition~\ref{d.Anosov} (ii).} at $\g_+\in\dg$ for any $\g\in\G$ with $\r_0(\g)$  $k$--proximal.
	%such that for any $\g\in\G$ with $\r_0(\g)$  $k$--proximal, we have that $\xi^{k}_{\r_0}$ is dynamics preserving at $\g_+\in\dg$.}}
\end{thm}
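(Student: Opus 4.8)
The plan is to construct the boundary map $\xi^k_{\r_0}$ first on a dense set of fixed points, exploiting the $k$--proximality hypothesis, and then to extend it continuously using the monotonicity that the positively ratioed condition passes to the limit. Since $\r_0$ is $k$--proximal, I would fix $\g_0\in\G$ with $\r_0(\g_0)$ $k$--proximal and let $T\subset\dg$ be the set of attracting and repelling fixed points of all conjugates of powers of $\g_0$; this set is $\G$--invariant, and the set of ordered pairs $(\eta_-,\eta_+)$ with $\eta$ conjugate to a power of $\g_0$ is dense in $\dg^{(2)}$. For such an $\eta$ the element $\r_0(\eta)$ is $k$--proximal, hence has a well defined attracting $k$--plane and repelling $k$--plane in $\Is_k(\R^{p,q})$, and I would set $\xi^k_{\r_0}(\eta_+)$ and $\xi^k_{\r_0}(\eta_-)$ to be these. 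Because in a surface group two infinite order elements share a point of $\dg$ precisely when they have a common power, this map is well defined on $T$, and it is $\r_0$--equivariant by construction.

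Next I would transport the positivity of the $\r_n$ to this partial map. As $\r_n\to\r_0$ and $k$--proximality is an open condition, for every $\eta$ conjugate to a power of $\g_0$ and every large $n$ the element $\r_n(\eta)$ is $k$--proximal, and its attracting plane $\xi^k_{\r_n}(\eta_+)$ --- the image of a spectral projection --- converges to $\xi^k_{\r_0}(\eta_+)$, and similarly for repelling planes. Since each $\r_n$ is $k$--positively ratioed with \emph{strict} cross ratio inequalities (by \cite{MZ}) and $\cro_k$ is continuous and $\PO(p,q)$--invariant, I would then pass to the limit: for every cyclically ordered quadruple in $T$ whose image under $\xi^k_{\r_0}$ lies in $\calA_k$ one gets $\cro_k\geq 1$.

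The step I expect to be the main obstacle is to show that $\xi^k_{\r_0}|_T$ is transverse, and that it then extends continuously. For transversality the idea is that a non--transverse pair would violate the cross ratio bound: if $x\neq y$ in $T$ had $\xi^k_{\r_0}(x)$ and $\xi^k_{\r_0}(y)$ non--transverse, then for $z,w\in T$ with cyclic order $x,z,y,w$ the numerator $\xi^k_{\r_n}(x)\wedge\xi^k_{\r_n}(y)^{\perp}$ of $\cro_k\big(\xi^k_{\r_n}(x),\xi^k_{\r_n}(z),\xi^k_{\r_n}(y),\xi^k_{\r_n}(w)\big)$ tends to zero while, as soon as the sides $(x,z)$ and $(w,y)$ are known transverse, the denominators stay bounded away from zero; the cross ratio would then tend to zero, contradicting the uniform bound $>1$. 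I would bootstrap this starting from the pairs $(\eta_+,\eta_-)$, which are transverse for elementary reasons ($k$--proximality of $\r_0(\eta)$ forces the top and bottom $k$ generalized eigenspaces to span complementary isotropic $k$--planes); here reductivity of $\r_0$ is what keeps $\xi^k_{\r_0}(T)$ from degenerating and lets the bootstrap close. Granting transversality, $\xi^k_{\r_0}|_T$ is a transverse map on a dense subset of the circle $\dg$ obeying $\cro_k\geq1$ on cyclically ordered quadruples; such a map is monotone, so it has one--sided limits everywhere, and I would rule out jumps: a jump would produce two distinct limiting values at a point, whereas at each point of the dense set $T$ the value is pinned down by the north--south dynamics of $\r_0$ to be the unique attracting plane, and $\r_0$--equivariance together with reductivity then propagate a contradiction. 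This yields a continuous, transverse, $\r_0$--equivariant map $\xi^k_{\r_0}:\dg\to\Is_k(\R^{p,q})$.

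Finally the dynamics preserving property follows from the construction: for $\g\in\G$ with $\r_0(\g)$ $k$--proximal, $\xi^k_{\r_0}(\g_+)$ is by definition the attracting $k$--plane of $\r_0(\g)$, and for any $z\neq\g_-$ continuity and equivariance give $\r_0(\g)^m\,\xi^k_{\r_0}(z)=\xi^k_{\r_0}(\g^m z)\to\xi^k_{\r_0}(\g_+)$, which is exactly what is asserted.
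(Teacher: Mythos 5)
First, a caveat on the comparison: this paper does not prove Theorem \ref{thm.pos ratio limit} at all --- it imports it verbatim from the companion paper \cite{Beyrer-Pozzetti2}, so there is no internal proof to measure your argument against. Judged on its own terms, your overall strategy (define $\xi^k_{\r_0}$ on fixed points of proximal elements, pass the cross ratio inequalities to the limit, extend by monotonicity, read off dynamics preservation from the construction) is a sensible and fairly standard outline, but it has a genuine gap exactly at the step you yourself flag as the main obstacle.

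The transversality bootstrap does not close as described. Your mechanism for detecting non-transversality of a suspect pair $(x,y)$ requires a cyclically ordered quadruple $(x,z,y,w)$ in which the two ``denominator'' pairs $(x,z)$ and $(w,y)$ are \emph{already known} to be transverse; the only pairs you know transverse at the start are the partner pairs $(\eta_+,\eta_-)$. Writing $x=\alpha_+$, $y=\beta_+$, this forces $z=\alpha_-$, $w=\beta_-$ and hence the cyclic order $\alpha_+,\alpha_-,\beta_+,\beta_-$. That is only one of the three possible configurations of the four fixed points. If the axes of $\alpha$ and $\beta$ are linked (cyclic order $\alpha_+,\beta_+,\alpha_-,\beta_-$), no admissible quadruple built from partner pairs puts $\alpha_+$ and $\beta_+$ in the numerator positions, and in the remaining unlinked configuration $\alpha_+,\alpha_-,\beta_-,\beta_+$ the available quadruple only yields $\alpha_+\tv\beta_-$ and $\alpha_-\tv\beta_+$, not $\alpha_+\tv\beta_+$; iterating with these new pairs still fails to separate $\alpha_+$ from $\beta_+$ by known-transverse partners. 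Appealing to reductivity here is not an argument --- reductivity must enter through a concrete mechanism, and supplying it is essentially the content of \cite[Theorem B]{Beyrer-Pozzetti2}. Two smaller points: the set of \emph{pairs} $(\eta_-,\eta_+)$ over conjugates of powers of $\g_0$ is a single $\G$-orbit in $\dg^{(2)}$ and is \emph{not} dense there (take $\g_0$ simple: its lifts form a discrete lamination); only the set of individual fixed points is dense in $\dg$. And the ``no jumps'' step needs an actual argument (e.g.\ that for each $\e>0$ the set of points with jump $\geq\e$ is finite and $\G$-invariant, hence empty) rather than the pinning-down at points of $T$, since a jump could occur at a point outside $T$.
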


An element $g\in\PO(p,q)$ is \emph{$k$-proximal} if it has a unique attracting point in $\Is_k(\R^{p,q})$, equivalently if $|\frac{\lambda_k}{\lambda_{k+1}}(g)|>1$;  a representation is  \emph{$k$--proximal} if its image contains a $k$--proximal element. {
	We prove closedness in two steps. First we deduce from Theorem~\ref{thm.intro-positive}, Theorem~\ref{thm.intro-collar} and Theorem~\ref{thm.pos ratio limit}  that any limiting representation admits well behaved boundary maps:
\begin{prop}\label{p.closdness of positivity}
	Let $\{\r_n:\G\to \PO(p,q)\}_{n\in \N}$ be $\T$-positive Anosov representations  converging to reductive representation $\r_0$ in $\Hom(\G,\PO(p,q))$. Then $\r_0$ admits equivariant, dynamics preserving, $\Theta$-positive maps.
	%is $\T$-Anosov and  $\T$-positive.
\end{prop}

\begin{proof} 

	From the collar lemma (Theorem~\ref{thm.intro-collar}) it follows that $\rho_0(\g)$ is $k$--proximal for every $k=1,\ldots,p-1$ and every $\g\in\G\backslash\{e\}$. Indeed, we can find a linked element $g\in\G\backslash\{e\}$ and thus  Theorem~\ref{t.collar}  gives
	\begin{align*}
		\left(1-\left| \frac{\lambda_{k+1}}{\lambda_{k}}(\r_n(\g))\right| \right)^{-1}<\lambda_1^2 \cdots\lambda_k^2 ({\r_n}(g)).
	\end{align*}
	As both quantities converge to the corresponding quantities for the representation $\rho_0$, we deduce that 
	$\left|\frac{\lambda_{k}}{\lambda_{k+1}}({\r_0}(\g))\right|>1$.

	Since every $\r_n$ is $k$--positively ratioed for $k=1,\ldots,p-1$ (Theorem~\ref{thm.intro-positive}) we can apply Theorem~\ref{thm.pos ratio limit} and deduce that $\r_0$ admits an equivariant, transverse, continuous, dynamics preserving boundary map
	$$\xi_{\r_0}:\dg\to \calF_{p-1}.$$
	It follows from Corollary~\ref{lem.deform positive tuples} that such boundary map is furthermore $\Theta$-positive. Indeed since $\xi_{\r_0}$ is dynamics preserving, we have $\xi_{\r_n}(\g_+)\to \xi_{\r_0}(\g_+)$
	for every attracting fixed point $\g_+\in\dg$ of $\g\in \G\backslash\{e\}$. The same argument as in the proof of Proposition~\ref{cor.positive clopen under Anosov} gives that the restriction of $\xi_{\rho_0}$ to the set of fixed points of elements in $\G\backslash\{e\}$ is a $\T$-positive map, and thus Corollary~\ref{cor.pos tuples} implies that $\xi_{\rho_0}$ is $\T$-positive.
\end{proof}
Second, we show that this is enough to guarantee that the limiting representation is Anosov:  for this we introduce  a new way to use the $\T$-positive boundary map to read eigenvalue gaps for the representation $\rho_0$.
\begin{prop}\label{p.positiveAnosov}
A representation $\rho$ admitting equivariant, continuous, $\Theta$-positive, dynamics preserving boundary maps $\xi_{\r}$ is $\Theta$-positive Anosov. 	
\end{prop}	
\begin{proof}	
Since $\Theta$-positve boundary maps are by definition transverse,	it remains to show the third condition in our definition of Anosov representations, namely that for a sequence $\g_n\in \G$ with $|\g_n|_{\infty}\to\infty$ it holds: 
	$$\left|\frac{\lambda_{k}}{\lambda_{k+1}}(\r_0(\g_n))\right|\to\infty.$$ 
	Up to conjugating and extracting a subsequence we can assume that $\g_n^+\to l_+$, $\g_n^-\to l_-$, and $l_-\neq l_+$. Pick $x\in \dg$ different from $l_-$ and $l_+$.

	{{
			It follows from the proofs of Proposition~\ref{prop.step 1 small k} and~\ref{prop.step 1 k=p-1} (cfr. Lemma~\ref{p.collar2,n}) that
			$$1-\left|\frac{\lambda_{k+1}}{\lambda_{k}}(\r(\g_n))\right|\geq \cro_{k}\left((\g_n^-)^{k},(\g_nx)^{k},(\g_n^+)^{k},(\g_n^-\vartriangleleft_{k}x)^{k}\right).$$
			It is enough to show that the right hand side converges to 1. If $k<p-1$ set $L:=\quotient{l_-^{k}}{l_-^{k-1}}$ and if $k=p-1$ set $L:=\quotient{l_-^{q+2}}{ l_-^{p-2}}$. The boundary dynamics of the surface group guarantees that $\g_n x$ converges to $l_+$; using continuity of the cross ratio we deduce % together with  (which follows from boundary dynamics of the surface group) yields
			\begin{align*}
				%& \cro_{k}\left((\g_n^-)^{k},(\g_nx)^{k},(\g_n^+)^{k},(\g_n^-\vartriangleleft_{k}x)^{k}\right)\\
				\cro_{k}\left((\g_n^-)^{k},(\g_nx)^{k},(\g_n^+)^{k},(\g_n^-\vartriangleleft_{k}x)^{k}\right)
				% \to & \cro_{k}((l_-)^{k},(l_+)^{k},(l_+)^{k},(l_-)^{p-2}\oplus ((l_-)^{q+2}\cap x^{p-1}))\\
				\to \cro_{k}((l_-)^{k},(l_+)^{k},(l_+)^{k},(l_-\vartriangleleft_{k}x)^{k}).
				% \\ =& \cro_1^{L}([l^{p-1}_-],[l^{p-1}_+],[l^{p-1}_+],[x^{p-1}]).
			\end{align*}
			We apply Proposition~\ref{prop.projection of cross ratio} to the latter cross ratio. If $k= p-1$ the cross ratio in the limit equals 
			$$\cro_1^{L}([l^{p-1}_-],[l^{p-1}_+\cap l_-^{q+2}],[l^{p-1}_+\cap l_-^{q+2}],[x^{p-1}\cap l_-^{q+2}]).$$ Since $\xi_{\r}$ is a $\T$-positive map, by Proposition~\ref{prop.projections are positive} the last cross ratio is defined (i.e. $[l^{p-1}_+\cap l_-^{q+2}]\tv [l^{p-1}_-],[x^{p-1}\cap l_-^{q+2}]$), thus it is equal to 1, as desired. %This yields that $\r_0$ is $p-1$--Anosov.% and thus by Corollary~\ref{cor.positive clopen under Anosov} also $\T$-positive.
			
			If $k<p-1$ the cross ratio in the limit equals $$\cro_1^{L}([l^{k}_-],[l^{p+q-k}_+\cap l_-^{k+1}],[l^{p+q-k}_+\cap l_-^{k+1}],[x^{p+q-k}\cap l_-^{k+1}]).$$ 
			As $\xi_{\r}$ satisfies property $H_k$ (Corollary~\ref{prop.Hk for pos}), it follows that $$[l^{p+q-k}_+\cap l_-^{k+1}]\tv [x^{p+q-k}\cap l_-^{k+1}],[l^{k}_-].$$
			 Therefore the cross ratio is again defined and equal to 1. In particular $\r_0$ is $k$--Anosov for $k\leq p-1$, as desired.}}
	
\end{proof}
This concludes the proof of Theorem~\ref{thm.C-intro}, namely that the set of $\Theta$-positive Anosov representations is closed in the representation variety: it follows from Propositions \ref{p.closdness of positivity} and \ref{p.positiveAnosov}   that any reductive limit of $\Theta$-positive Anosov representations is $\Theta$-positive Anosov.  Corollary \ref{cor.positive for semisimplification} and the discussion just before the corollary yield that \emph{any} limit of $\Theta$-positive Anosov representations is $\Theta$-positive Anosov}.

{Theorem~\ref{thm.C-intro} has applications to character varieties, which, as in the introduction, we realize as the quotient of the set of reductive representations by the $\PO(p,q)$-action by conjugation. { It follows combining Corollary \ref{c.higherTM spaces} and the work of \cite{ABC-IM} that $\T$-positive Anosov representations in $\SO(p,q)$ are necessarily reductive, since it is proven in \cite{ABC-IM} that all representations in the components of the caracter variety corresponding to $\T$-positive Anosov representations are non-parabolic, while a non-reductive representation, as well as its semisimplification, is parabolic. 
}}

%{\textcolor{orange}{A posteriori, $\T$-positive Anosov representations in $\SO(p,q)$ are always reductive. Indeed combining Corollary \ref{c.higherTM spaces} and the work of \cite{ABC-IM} it follows that any reductive $\T$-positive Anosov representation in $\SO(p,q)$ does not fibre through a parabolic. Since 'not fibering thourgh a parabolic' is an open and conjugation invariant property it follows that no  $\T$-positive Anosov representations in $\SO(p,q)$ fibres through a parabolic (through conjugation we can get any such representation arbitrarily close to a reductive $\T$-positive Anosov representation). However any representation not fibreing through a parabolic subgroup is reductive.}}\mJ{This paragraph is an alternative for the last blue paragraph. Last sentence true?}

\begin{cor}\label{c.higherTM spaces}
The subset consisting (of conjugacy classes) of $\T$-positive Anosov representations is a union of connected components of 
	$$\Hom(\G,\PO(p,q))\quad\text{ and }\quad\Xi( \PO(p,q)).$$
%	$$\Hom(\G,\PO(p,q)),\quad\Homr(\G,\PO(p,q)),\quad\Xi( \PO(p,q)).$$
\end{cor}

%\begin{proof}
%	The condition is open in $\Hom(\G,\PO(p,q))$ by Corollary~\ref{cor.positive clopen under Anosov} and in $\Homr(\G,\PO(p,q))$ {since the latter has the subspace topology. } Theorem~\ref{t.closdness of positivity} gives that it is a closed condition in $\Hom(\G,\PO(p,q))$, and thus also in $\Homr(\G,\PO(p,q))$. Since being $\T$-positive Anosov is conjugation invariant, the result  follows also for $\Xi(\PO(p,q))$.
%\end{proof}

 Corollary~\ref{c.higherTM spaces} also shows that $\T$-positive Anosov representations constitute connected components in $\Xi(\SO(p,q))$ (cfr. Remark~\ref{rem.pos general groups}).

\begin{remark}\label{rem.ht-components}
	Aparicio-Arroyo--Bradlow--Collier--Garcia-Prada--Gothen--Oliveira used  Higgs bundle techniques to count  the connected components of the character variety $\Xi(\SO(p,q))$ \cite{ABC-IM} and checked for the existence of $\T$-positive representations.
	The component count in \cite{ABC-IM} is as follows, denoting by $g$ the genus of the Riemann surface:% the connected components fall in two categories:
	\begin{itemize}
		\item There are $2^{2g+2}$ \emph{mundane} components. Each of these compontents contains a representation with image contained in a compact subgroup of $\SO(p,q)$ - such a representation is clearly not discrete and thus not Anosov. It follows from Corollary~\ref{c.higherTM spaces} that those connected components contain no $\T$-positive Anosov representation.%\marginpar{Send the paper to Brian as a preview and ask him if the numbers match..}
		
		\item If $q>p+1$, then there are $2^{2g+1}$ additional \emph{exceptional} components. Each  such component contains a  $\T$-positive Anosov representation; namely a representation as in Example~\ref{e.positive fuchsian}.\footnote{ {{Actually, using the notation of Example~\ref{e.positive fuchsian}, a representation of the form $\r:\G\to\SO(p,q),\r:=\det(\alpha)(\tau\circ\iota)\oplus\alpha$.}}} Thus every representation in those connected components is $\T$-positive Anosov. The exceptional components are then, as conjectured, the higher rank Teichm\"uller spaces associated to the group $\SO(p,q)$ for $p+1<q$.
		
		\item If $q=p+1$ there are $2^{2g+1}-1+2p(g-1)$ further exceptional components, which are smooth and conjectured to consit {{entirely of \emph{Zariski dense} representations \cite[Conjecture 1.7]{Collier-ENS}. According to \cite[Remark 7.14]{ABC-IM},  Corollary~\ref {c.higherTM spaces} implies that these also consist only of $\T$-positive Anosov representations: For the natural embedding $\SO(p,p+1)\to \SO(p,p+2)$ these components inject into $\T$-positive components of $\SO(p,p+2)$, in particular all representations are positive, when composed with the embedding $\SO(p,p+1)\to \SO(p,p+2)$. As a result those representations are also $\T$-positive in $\SO(p,p+1)$.}}
		It would  be interesting to construct explicit \emph{hybrid} representations in these components, in analogy to \cite{GWtop}. See \cite{Kyd} for results in this direction from the Higgs bundle perspective.
	\end{itemize}
\end{remark}

\appendix
\section{Positive representations in the split group $\PO(p,p)$}\label{a.POpp}
We restricted throughout the paper to the case $p<q$ because the root system of the split group $\PO(p,p)$ has a different expression, and, in this case, the ratio $\log\left|\frac{\lambda_{p-1}}{\lambda_p}\rho(g)\right|$ is not a simple root of $\PO(p,p)$, but rather the minimum of the last two simple roots. Furthermore the manifold $\calF_{p-1}$ we deal with in the paper identifies, in this case, with the full flag manifold (the stabilizer of a point is a Borel subgroup); this is due to the fact that in a space of signature $(p,p)$ every $(p-1)$-dimensional isotropic subspace is contained in precisely two $p$-dimensional isotropic subspaces. 

However it is easy to check that we never use the root system formalism in the paper, but only the linear algebraic interpretation of the group $\PO(p,q)$ and of the associated flags of isotropic subspaces. Moreover if $p=q$, then the $\T$-positive structure we describe is the positive structure of the split group $\PO(p,p)$ (see e.g. the proof of \cite[Proposition 7.10]{Collier-ENS}). As a result, our proofs of Theorems~\ref{thm.intro-positive},~\ref{thm.intro-collar} and~\ref{thm.C-intro} work in the case $p=q$ as well.\footnote{Alternatively one can deduce these results from the results of this paper composing a representation in $\PO(p,p)$ with the standard  embedding $\PO(p,p)\to\PO(p,p+1)$; the image under this homomorphism of the composition of an hyperbolization and the principal $\SL(2,\R)$-embedding is then $\Theta$ positive, and it thus follows from Corollary~\ref{cor.intro} that the whole $\PO(p,p)$-Hitchin component only consists of $\T$-positive Anosov representations.} 
In particular this shows that the Hitchin component in   $\PO(p,p)$, which so far had been elusive, only consists of Anosov representations (Theorem~\ref{thm.C-intro}) which are furthermore positively ratioed with respect to all symmetrized weights (Theorem~\ref{thm.intro-positive}) and satisfy root versus (symmetrized) weight collar lemmas for all simple roots in the root system: Theorem~\ref{thm.intro-collar} for $k=p-1$ compares the minimum of the last two roots to the symmetrized fundamental weight $\omega_{p-1}+\omega_p$ (i.e an opposition involution invariant weight). Taking the minimum of the roots can be dropped to get collar lemmas comparing the last two simple roots to the symmetrized weight. We believe that all these results are new for $\PO(p,p)$-Hitchin representations.

%%%%%%%%%%%%%%%

\newcommand{\etalchar}[1]{$^{#1}$}

\end{document}